\title[Lipschitz-Volume rigidity and Sobolev coarea inequality]{Lipschitz-Volume rigidity and Sobolev coarea inequality for metric surfaces}
\author{Damaris Meier}
\address[Damaris Meier]{Department of Mathematics\\ University of Fribourg\\ Chemin du Mus\'ee 23\\ 1700 Fribourg, Switzerland.}
\email{damaris.meier@unifr.ch}
\author{Dimitrios Ntalampekos}
\address[Dimitrios Ntalampekos]{Mathematics Department, Stony Brook University, Stony Brook, NY 11794, USA.}
\email[Corresponding author]{dimitrios.ntalampekos@stonybrook.edu}
\thanks{The first-named author is partially supported by UniFr Doc.Mobility Grant DM-22-10. The second-named author is partially supported by NSF Grant DMS-2000096.}
\keywords{Lipschitz-volume rigidity, metric surfaces, reciprocal, quasiconformal, Ahlfors regular, coarea inequality, Sobolev}
\subjclass[2020]{Primary 53C23, 53C45; Secondary 30C65, 53A05.}
\date{\today}
\newtheorem{thm}{Theorem}[section]
\newtheorem{lemma}[thm]{Lemma}
\newtheorem{corollary}[thm]{Corollary}
\newtheorem{proposition}[thm]{Proposition}
\theoremstyle{remark}
\newtheorem{remark}[thm]{Remark}
\newtheorem{question}[thm]{Question}
\theoremstyle{definition}
\newtheorem{example}[thm]{Example}
\newcommand{\R}{\mathbb{R}}
\newcommand{\N}{\mathbb{N}}
\newcommand{\D}{\mathbb{D}}
\newcommand{\hm}{{\mathcal H^2}}
\newcommand{\loc}{\mathrm{loc}}
\newcommand{\lip}{\mathrm{Lip}}
\renewcommand{\mod}{\operatorname{Mod}}
\renewcommand{\bar}[1]{\overline{#1}}
\renewcommand{\top}{\mathrm{top}}
\DeclareMathOperator{\vol}{Vol}
\DeclareMathOperator{\dist}{dist}
\DeclareMathOperator{\inter}{int}
\DeclareMathOperator{\diam}{diam}
\DeclareMathOperator{\ap}{ap}
\DeclareMathOperator*{\essinf}{essinf} 
\DeclareMathOperator{\md}{md}
\numberwithin{equation}{section}
\begin{document}
\maketitle
\begin{abstract}
    We prove that every $1$-Lipschitz map from a closed metric surface onto a closed Riemannian surface that has the same area is an isometry. If we replace the target space with a non-smooth surface, then the statement is not true and we study the regularity properties of such a map under different geometric assumptions. Our proof relies on a coarea inequality for continuous Sobolev functions on metric surfaces that we establish, and which generalizes a recent result of Esmayli--Ikonen--Rajala. 
\end{abstract}

\section{Introduction}

The Lipschitz-Volume rigidity problem in its general formulation asks whether every $1$-Lipschitz and surjective map between metric spaces that have the same volume (e.g.\ arising from Hausdorff measure) is necessarily an isometry. It is well-known that the answer to this problem is affirmative for maps between smooth manifolds.

\smallskip
\noindent
\textit{Let $X,Y$ be closed Riemannian $n$-manifolds, where $n\geq 1$. If $\vol(X)=\vol(Y)$, then every $1$-Lipschitz map from $X$ onto $Y$ is an isometric homeomorphism.}
\smallskip

See \cite{BuragoIvanov}*{Section 9} or \cite{BessonEtAl}*{Appendix C} for a proof of this fact.  Moreover this statement has been generalized to singular settings of Alexandrov and limit RCD spaces by Storm \cite{Storm}, Li \cite{Li:alexandrov}, and Li--Wang \cite{LiWang}. See also \cite{Li:survey} for an overview of the Lipschitz-Volume rigidity problem. The problem in the setting of integral current spaces has been recently studied by Basso--Creutz--Soultanis \cite{BCS}, Del Nin--Perales \cite{DNP}, and Züst \cite{Zus23}.

The recent developments in the uniformization of non-smooth metric surfaces by Rajala, Romney, Wenger, and the current authors \cites{Raj:17,NR:21,MW21,NR22}, allow us to establish the above rigidity statement in the two-dimensional setting under no geometric, smoothness, or curvature assumptions on $X$. 

\begin{thm}\label{thm:main:riemannian}
    Let $X$ be a closed metric surface and $Y$ be a closed Riemannian surface. If $\mathcal H^2(X)=\mathcal H^2(Y)$, then every $1$-Lipschitz map from $X$ onto $Y$ is an isometric homeomorphism.  
\end{thm}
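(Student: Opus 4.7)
The plan is to combine the weakly quasiconformal uniformization of metric surfaces from \cite{NR22} with the Sobolev coarea inequality announced in the abstract, in order to transport to this setting the classical infinitesimal argument for Riemannian Lipschitz--volume rigidity.

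First, since $f\colon X\to Y$ is $1$-Lipschitz and surjective, one has $\hm(Y)=\hm(f(X))\le\hm(X)$, and the hypothesis forces equality. I would then invoke a uniformization $\phi\colon Z\to X$, where $Z$ is a closed Riemannian surface of the same topological type as $X$ and $\phi$ is continuous, monotone, surjective, and satisfies a Sobolev-type area identity $\int_Z J_\phi\,d\hm=\hm(X)$. Setting $\psi:=f\circ\phi\colon Z\to Y$, the $1$-Lipschitz property of $f$ dominates the metric derivative of $\psi$ by that of $\phi$ pointwise, and, in the infinitesimal sense appropriate to the uniformization, this gives $J_\psi\le J_\phi$ $\hm$-a.e.\ on $Z$. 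The classical area formula between Riemannian surfaces yields $\hm(Y)\le\int_Z J_\psi\,d\hm$, and combining with $\hm(X)=\hm(Y)$ forces equality everywhere: $J_\psi=J_\phi$ a.e., $\psi$ is essentially injective, and in particular $\phi$ is infinitesimally conformal.

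The next step is to upgrade this infinitesimal volume rigidity to a genuine isometry. Here the coarea inequality for continuous Sobolev functions on metric surfaces enters. I would apply it to the $1$-Lipschitz functions $d_Y(f(\cdot),y_0)$ on $X$ for $y_0\in Y$; the equality case, together with the Jacobian identity above, forces $\hmm(f\circ\gamma)=\hmm(\gamma)$ for a curve family $\Gamma$ of full $2$-modulus in $X$. Since $Y$ is Riemannian and hence a length space, any two points $f(x),f(x')\in Y$ are joined by a minimizing geodesic that can be approximated by curves in $\Gamma$, yielding $d_Y(f(x),f(x'))\ge d_X(x,x')$; combined with the $1$-Lipschitz hypothesis, this gives that $f$ is an isometry.

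A distance-preserving surjection between compact metric spaces is automatically a homeomorphism, completing the proof. The step I expect to be the main obstacle is the passage from the a.e.\ Jacobian identity to pointwise length preservation along a rich enough curve family: because $X$ is not assumed reciprocal and $\phi$ may genuinely collapse continua, the classical coarea inequality for Lipschitz functions does not suffice, and the strength of the new coarea inequality for continuous Sobolev functions, together with a careful analysis of the no-collapse and infinitesimal conformality properties of $\phi$, is exactly what enables this upgrade.
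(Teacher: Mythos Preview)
Your architecture matches the paper's---uniformize $X$ by a weakly quasiconformal $\phi\colon Z\to X$, compare Jacobians of $\phi$ and $\psi=f\circ\phi$ to get $J_\psi=J_\phi$ a.e., deduce length preservation along a.e.\ curve, then approximate geodesics in $Y$---but there is a genuine gap at injectivity. Your ``$\psi$ is essentially injective'' is only $N(f,y)=1$ for \emph{a.e.}\ $y$ (this is the easy Lemma~\ref{lemma:multiplicity}). Upgrading to $N(f,y)=1$ for \emph{every} $y$ is the hardest step in the paper (Lemma~\ref{lemma:injectivity}), and your proposal does not supply it. Without it, your final step fails: to bound $d_X(x,x')$ by $d_Y(f(x),f(x'))$ you must lift a geodesic in $Y$ to a curve in $X$ joining $x$ to $x'$, and this lifting requires $f^{-1}$ to exist. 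Your phrase ``approximated by curves in $\Gamma$'' hides exactly this problem, since $\Gamma$ lives in $X$ and there is no mechanism to produce curves in $X$ with prescribed endpoints whose $f$-images approximate a given curve in $Y$ unless $f$ is already a homeomorphism.

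You have also misidentified where the coarea inequality enters. Length preservation along a.e.\ curve follows directly from $J_\psi=J_\phi$ via the approximate metric derivative (Lemma~\ref{lemma:length}); no coarea is needed there, and there is no ``equality case'' to exploit. The Sobolev coarea inequality (Theorem~\ref{theorem:coarea:introduction}) is used instead in the injectivity argument: applied to $u(x)=d_Y(f(x),y_0)$, it shows that for a.e.\ $r$ the non-degenerate components of $f^{-1}(S(y_0,r))$ are rectifiable Jordan arcs or curves (Lemma~\ref{lemma:preimage_jordan}), and this structural information feeds a topological separation argument that rules out multiple preimages (Lemma~\ref{lemma:injectivity}). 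Only once $f$ is a homeomorphism does the approximation in $Y$ (Lemmas~\ref{lemma:nearby_curves} and~\ref{lemma:BLD}) let you pull geodesics back via $f^{-1}$ and conclude isometry.
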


Here a closed metric surface is a compact topological $2$-manifold without boundary, equipped with a metric that induces its topology. Also, an isometric map is a distance-preserving map. We state an immediate corollary.

\begin{corollary}\label{corollary:sphere}
    Among all metrics $d$ on $\mathbb S^2$ that are at least as large as the spherical metric, the map $d\mapsto \mathcal H^2_d(\mathbb S^2)$ has a unique minimum attained by the spherical metric.
\end{corollary}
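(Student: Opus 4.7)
The plan is to deduce the corollary directly from Theorem~\ref{thm:main:riemannian} applied to the identity map. Write $d_0$ for the spherical metric on $\mathbb S^2$ and let $d$ be any metric with $d \geq d_0$, assumed (as is implicit in the statement) to induce the standard topology, so that $(\mathbb S^2, d)$ qualifies as a closed metric surface.

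The first step is to record the monotonicity $\mathcal H^2_d(\mathbb S^2) \geq \mathcal H^2_{d_0}(\mathbb S^2)$, establishing that the minimum is achieved by $d_0$. This follows from the fact that $d \geq d_0$ forces $\diam_d(A) \geq \diam_{d_0}(A)$ for every $A \subset \mathbb S^2$; hence any admissible $d$-cover of scale $\delta$ is also an admissible $d_0$-cover of scale $\delta$, with an equal or smaller associated diameter sum. Passing to the infimum in the definition of Hausdorff $2$-measure yields the desired inequality.

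For uniqueness, I would suppose $\mathcal H^2_d(\mathbb S^2) = \mathcal H^2_{d_0}(\mathbb S^2)$ and apply Theorem~\ref{thm:main:riemannian}. The identity map $\mathrm{id} \colon (\mathbb S^2, d) \to (\mathbb S^2, d_0)$ is $1$-Lipschitz (from $d_0 \leq d$) and surjective, its domain is a closed metric surface, and its target is a closed Riemannian surface of equal Hausdorff $2$-measure. The theorem then forces the identity to be an isometric homeomorphism, which is precisely the statement $d = d_0$.

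There is really no technical obstacle here once Theorem~\ref{thm:main:riemannian} is available; the corollary is essentially a two-line deduction. The only point deserving a remark is the requirement that $d$ induces the usual topology on $\mathbb S^2$, which is what legitimizes the hypothesis of Theorem~\ref{thm:main:riemannian} for the domain $(\mathbb S^2, d)$.
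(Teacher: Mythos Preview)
Your argument is correct and matches the paper's intended reasoning: the paper presents this as an immediate corollary of Theorem~\ref{thm:main:riemannian} without further proof, and the two-step deduction you give (monotonicity of $\mathcal H^2$ under $d\geq d_0$, then Theorem~\ref{thm:main:riemannian} applied to the identity map) is exactly the intended route. Your remark that the identity is already injective is also in line with the paper's own comment that the injectivity issue is absent in this corollary.
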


We note in the next example that the conclusion is not true in general if we replace the spherical metric with a non-smooth metric.

\begin{example}\label{example:sphere}
  Consider a non-constant rectifiable curve $E$ in $\mathbb S^2$ and let $d_0$ be the length metric $\chi_{\mathbb S^2\setminus E} \, ds + (1/2)\chi_{E}\, ds$. Then there exist infinitely many distinct metrics $d\geq d_0$ having the same area as $d_0$. Namely, for each $\delta\in (1/2,1]$, the metric $\chi_{\mathbb S^2\setminus E} \, ds + \delta\chi_{E}\, ds$ has this property.  
\end{example}

{
One of the most technical difficulties of Theorem \ref{thm:main:riemannian} is establishing the injectivity of the map in question; see Lemma \ref{lemma:injectivity}. 
Since this issue is not present in Corollary \ref{corollary:sphere}, it is conceivable that the result can be obtained in higher dimensions as well by a modification of our argument. 
}

\subsection{Area-preserving and Lipschitz maps between surfaces}
A map as in Theorem \ref{thm:main:riemannian} preserves the Hausdorff $2$-measure, or else area measure, of every measurable set. Theorem \ref{thm:main:riemannian} is a consequence of Theorem \ref{thm:main} below, which provides several topological and regularity results for area-preserving and Lipschitz maps between surfaces of locally finite Hausdorff $2$-measure. 

We provide the necessary definitions. Let $X$ and $Y$ be metric surfaces of locally finite Hausdorff $2$-measure. A  map $f\colon X\to Y$ is \emph{area-preserving} if $\hm(A)=\hm(f(A))$ for every measurable set $A\subset X$. A map $f\colon X\to Y$ is \textit{Lipschitz} if there exists $L>0$ such that for all $x_1,x_2\in X$ we have
$$d(f(x_1),f(x_2))\leq L\,d(x_1,x_2).$$
In this case, we say that $f$ is $L$-Lipschitz. A homeomorphism $f\colon X\to Y$ is \emph{quasiconformal} (abbr.\ QC) if there exists $K\geq 1$ such that $$K^{-1}\mod f(\Gamma)\leq \mod \Gamma \leq K \mod f (\Gamma)$$ for each path family $\Gamma$ in $X$; here $\mod$ refers to $2$-modulus. In this case we say that $f$ is $K$-quasiconformal. A map $f\colon X\to Y$ is a map of \textit{bounded length distortion} (abbr.\ BLD) if there exists a constant $K\geq 1$ such that $$K^{-1}\cdot\ell(\gamma)\leq\ell(f\circ\gamma)\leq K\cdot\ell(\gamma)$$  for all curves $\gamma$ in $X$; this includes curves of infinite length. In this case we say that $f$ is a map of $K$-bounded length distortion. 

We say that the surface $X$ is \textit{reciprocal} if there exists a constant $\kappa>0$ such that for every quadrilateral $Q\subset X$ and for the families $\Gamma(Q)$ and $\Gamma^*(Q)$ of curves joining opposite sides of $Q$ we have 
$$\mod  \Gamma(Q)\cdot \mod\Gamma^*(Q)\leq \kappa.$$
By a result of Rajala \cite{Raj:17}*{Section 14}, if a surface is reciprocal then the above holds for some $\kappa\leq (\pi/2)^2$. Reciprocal surfaces are important because they are precisely the metric surfaces that admit quasiconformal parametrizations by Riemannian surfaces \cites{Raj:17,Iko:19,NR22}. We say that $X$ is \textit{upper Ahlfors $2$-regular} if there exists $K>0$ such that
$${\mathcal H^2(B(x,r))}\leq K  r^2$$  
for every ball $B(x,r)\subset X$. If $X$ is (locally) upper Ahlfors $2$-regular, then it is also reciprocal \cite{Raj:17}. See Section \ref{section:reciprocal} for further details. We state our main theorem, which is also concisely presented in Table \ref{table}. 

\begin{thm} \label{thm:main}
Let $X,Y$ be metric surfaces without boundary and with locally finite Hausdorff $2$-measure, and let $f\colon X\to Y$ be an area-preserving surjective map. 
\begin{enumerate}[label=\normalfont(\arabic*)]
    \item If $X$ is reciprocal and $f$ is Lipschitz, then there exists a constant $K\geq 1$ such that 
   $$K^{-1}\cdot \ell(\gamma)\leq \ell(f\circ \gamma)\leq K\cdot \ell(\gamma)$$
   for all curves $\gamma$ in $X$ outside a curve family $\Gamma_0$ with $\mod\Gamma_0=0$. Moreover, if $f$ is $1$-Lipschitz, then $K=1$. \label{main:length} 
        
    \item If $Y$ is reciprocal and $f$ is Lipschitz, then there exists a constant $K\geq 1$ such that $f$ is a $K$-quasiconformal homeomorphism and  $${K}^{-1} \cdot\ell(\gamma)\leq\ell(f\circ\gamma)\leq K\cdot\ell(\gamma)$$
   for all curves $\gamma$ in $X$ outside a curve family $\Gamma_0$ with $\mod\Gamma_0=0$. Moreover, if $f$ is $1$-Lipschitz, then $K=1$.  \label{main:injective}
   
   \item If $Y$ is upper Ahlfors $2$-regular and $f$ is Lipschitz, then there exists a constant $K\geq 1$ such that $f$ is a homeomorphism of $K$-bounded length distortion. 
   \label{main:bld} 

\end{enumerate}
The constant $K$ in \ref{main:length}--\ref{main:bld} depends quantitatively on the assumptions.
\begin{enumerate}[label=\normalfont(\arabic*)]\setcounter{enumi}{3}
   \item If $Y$ is Riemannian and $f$ is $1$-Lipschitz, then $f$ is an isometric homeomorphism. \label{main:isometry}
\end{enumerate}

\end{thm}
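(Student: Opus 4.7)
The plan is to combine the quasiconformal uniformization of reciprocal metric surfaces \cites{Raj:17,Iko:19,NR22} with the Sobolev coarea inequality developed in this paper. The coarea inequality is the engine that converts the global area information supplied by area-preservation into pointwise length information on almost every curve, and the uniformization theory lets me work in a Riemannian model on one side of $f$ whenever possible.

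For part \ref{main:length}, since $X$ is reciprocal, I would fix a quasiconformal parametrization $\varphi\colon\Omega\to X$ from a Riemannian domain $\Omega$. The composition $F=f\circ\varphi$ belongs to a Sobolev class with an upper gradient bounded by $L$ times an upper gradient of $\varphi$. The area formula combined with area-preservation yields
\[
\int_\Omega J_F\,d\mathcal{L}^2=\mathcal{H}^2(Y)=\mathcal{H}^2(X)=\int_\Omega J_\varphi\,d\mathcal{L}^2,
\]
and together with the pointwise Lipschitz Jacobian bound this pins down the metric derivative of $F$ on a set of full measure. Applying the Sobolev coarea inequality to compositions of $f$ with suitable $1$-Lipschitz test functions on $Y$ (for instance, distances from a reference point, or coordinates in a local chart) then transfers this infinitesimal identity into the length bound $K^{-1}\ell(\gamma)\leq\ell(f\circ\gamma)\leq L\ell(\gamma)$ outside a modulus-zero family. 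Tracking constants in the $1$-Lipschitz case forces $K=1$.

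For part \ref{main:injective}, I would use reciprocity of $Y$ to uniformize it by a Riemannian surface and run the argument of (1) through this uniformization on the target side. The key additional ingredient is injectivity, supplied by Lemma \ref{lemma:injectivity}: if $f$ were to identify distinct points, local overlaps would inflate the image area estimate provided by the Lipschitz bound and the area formula, contradicting $\mathcal{H}^2(f(X))=\mathcal{H}^2(X)$. Once injectivity holds, compactness turns $f$ into a homeomorphism, and the modulus bounds that follow from the a.e.\ length estimate give quasiconformality. I expect this injectivity step to be the main obstacle of the whole theorem, as foreshadowed in the introduction.

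For part \ref{main:bld}, upper Ahlfors $2$-regularity of $Y$ implies reciprocity, so (2) already provides a quasiconformal homeomorphism with a.e.\ bounded length distortion. The additional volume regularity of $Y$, transferred to $X$ by area-preservation and the Lipschitz bound, furnishes the density control needed to upgrade the a.e.\ statement to BLD on every rectifiable curve: any exceptional curve would be trapped in small balls where the area formula and the upper Ahlfors estimate can be played against the a.e.\ metric derivative of $f$ to force a contradiction. Finally, for part \ref{main:isometry}, a Riemannian target is both reciprocal and upper Ahlfors $2$-regular, so combining the $L=1$ case of (2) with (3) gives a $1$-BLD homeomorphism. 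Since $Y$ is a length space and $f$ is a homeomorphism preserving the length of every curve,
\[
d_Y(f(x_1),f(x_2))=\inf_{\gamma}\ell(f\circ\gamma)=\inf_{\gamma}\ell(\gamma)\geq d_X(x_1,x_2),
\]
while the $1$-Lipschitz hypothesis gives the reverse inequality, yielding an isometry.
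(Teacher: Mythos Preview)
Your outline misplaces the role of the Sobolev coarea inequality and, as a result, leaves the two hardest steps unexplained.

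For part \ref{main:length} the paper does \emph{not} use the coarea inequality at all. After pulling back by a quasiconformal chart $\varphi\colon U\to X$ (as you suggest), the argument compares the approximate metric derivatives $N_x=\ap\md\varphi_x$ and $\widetilde N_x=\ap\md(f\circ\varphi)_x$. Area preservation and the area formula give $J(\widetilde N_x)=J(N_x)$, i.e.\ the unit balls $B_x,\widetilde B_x$ have equal Lebesgue measure, while the $L$-Lipschitz bound gives $\widetilde N_x\le L\,N_x$, i.e.\ $B_x\subset L\widetilde B_x$. The nontrivial step---the \emph{lower} bound on $\widetilde N_x$---is obtained via John's ellipse theorem, which converts ``equal area $+$ one inclusion of convex bodies'' into a two-sided comparison $C(L)N_x\le \widetilde N_x\le L N_x$ (with $C(1)=1$). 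The length bounds then follow directly from the length formula $\ell(h\circ\beta)=\int \ap\md h_{\beta(t)}(\dot\beta(t))\,dt$, not from any coarea-type argument. Your phrase ``this pins down the metric derivative'' hides exactly this John-ellipse step, and the subsequent appeal to coarea with $1$-Lipschitz test functions on $Y$ is neither needed nor, as stated, capable of producing a lower length bound.

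For part \ref{main:injective}, your description of Lemma \ref{lemma:injectivity} (``local overlaps would inflate the image area'') is only the easy half: it yields $N(f,y)=1$ for \emph{almost every} $y$ (this is Lemma \ref{lemma:multiplicity}). Upgrading to $N(f,y)\le 1$ for \emph{every} $y$ is genuinely topological and is precisely where the Sobolev coarea inequality of Theorem \ref{theorem:coarea:introduction} enters: applied to $u(x)=d(f(x),y_0)$, it shows (Lemma \ref{lemma:preimage_jordan}) that for a.e.\ $r$ the components of $f^{-1}(S(y_0,r))$ are Jordan arcs or Jordan curves. Combined with lightness of $f$ (which uses reciprocity of $Y$), one argues that near any putative double point the preimages of small metric circles contain Jordan curves mapping onto the circle, contradicting the a.e.\ single-valuedness on $S(y_0,r)$. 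An area-counting argument alone cannot see a measure-zero failure of injectivity.

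Finally, in part \ref{main:isometry} you cannot simply combine the $K=1$ case of \ref{main:injective} with \ref{main:bld}: the latter gives $K$-BLD with $K$ depending on the Ahlfors constant, not $1$-BLD. The paper needs the Riemannian refinement (tube volume $\le 2r\ell(\gamma)+O(r^2)$ and the coarea inequality with constant $1$) to approximate any curve by nearby curves outside $\Gamma_0$ with length loss $\le\varepsilon$, which is what yields $\ell(f^{-1}\circ\gamma)\le\ell(\gamma)$ for \emph{every} curve.
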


\begin{table}
\small{
\begin{tabular}{c||c|c|c|c}
Reference & $X$ & $Y$   & $f$ & Conclusion about $f$  \\ \hline \hline
Question\ \ref{quest:quasi-preserve_a.e.}  & - & - &  Lip.\ & BLD on a.e.\ curve? \\ \hline
Thm.\ \ref{thm:main} \ref{main:length}  & Reciprocal & - &  ($1$-)Lip.\ & ($1$-)BLD on a.e.\ curve \\ \hline
Example \ref{example:collapse} &Riemannian & - &  $1$-Lip. & \textit{Not} homeomorphic\\ \hline 
Thm.\ \ref{thm:main} \ref{main:injective}& -& Reciprocal   & ($1$-)Lip. & ($1$-)QC homeom., \\ & & & & ($1$-)BLD on a.e.\ curve\\ \hline
Example \ref{example:weight} & Riemannian & Reciprocal & 1-Lip. & \textit{Not} BLD \\  \hline 
Thm.\ \ref{thm:main}\ref{main:bld} &- & Upper regular & Lip. & QC homeom., BLD\\\hline 
Example \ref{example:sphere} & Riemannian & Upper regular & $1$-Lip.\ & \textit{Not} isometric\\\hline
Thm.\ \ref{thm:main}\ref{main:isometry} &- & Riemannian & $1$-Lip. & Isometry
\end{tabular}
}
\caption{The conclusions of Theorem \ref{thm:main}. In all cases $f$ is assumed to be area-preserving. }\label{table}
\end{table}

We were neither able to show that part \ref{main:length} holds without the assumption that $X$ is reciprocal, nor were we able to find a counterexample. This raises the following question.

\begin{question}\label{quest:quasi-preserve_a.e.}
Suppose that $X,Y$ are metric surfaces of locally finite Hausdorff $2$-measure. If $f\colon X\to Y$ is an area-preserving and Lipschitz map, does it quasi-preserve the length of a.e.\ path in $X$?
\end{question}

In Section \ref{section:examples} we present examples illustrating the optimality of Theorem \ref{thm:main}. We first note that area-preserving and $1$-Lipschitz maps are not injective in general without any assumptions on $Y$; a sufficient condition is the reciprocity of $Y$ in part \ref{main:injective}. Moreover, one cannot expect in  part \ref{main:injective} that the length of \textit{all} curves (rather than a.e.\ curve) is quasi-preserved; a sufficient condition is upper Ahlfors $2$-regularity of $Y$ as in  \ref{main:bld}.  Finally, in part \ref{main:bld} one cannot expect a $1$-Lipschitz map $f$ to be an isometry without further assumptions on $Y$, such as smoothness, as in \ref{main:isometry}; this has already been illustrated in Example \ref{example:sphere}.

\subsection{Coarea inequality}
The proof of Theorem \ref{thm:main} relies on a coarea inequality for continuous Sobolev functions on metric surfaces. The following result is an improvement of the coarea inequality for \textit{monotone} Sobolev functions that was established recently in \cite{EIR22}; here monotonicity means that the maximum and minimum of a function on a precompact open set are attained at the boundary. We direct the reader to \cite{EIR22} for further background on the coarea inequality in metric spaces. 

\begin{thm}\label{theorem:coarea:introduction}
    Let $X$ be a metric surface of locally finite Hausdorff $2$-measure and $u\colon X\to \R$ be a continuous function with a $2$-weak upper gradient $\rho_u\in L^2_{\loc}(X)$. 
    \begin{enumerate}[label=\normalfont(\arabic*)]
        \item If $\mathcal A_u$ denotes the union of all non-degenerate components of the level sets $u^{-1}(t)$, $t\in \R$, of $u$, then $\mathcal A_u$ is a Borel set. \label{ca:i}
        \item For every Borel function $g\colon X\to [0,\infty]$ we have
        $$\int\displaylimits^* \int_{u^{-1}(t)\cap \mathcal A_u}g\, d\mathcal H^1\, dt \leq \frac{4}{\pi} \int g\rho_u\, d\mathcal H^2.$$\label{ca:ii}
        \item If, in addition, $u$ is Lipschitz, then for every Borel function $g\colon X\to [0,\infty]$ we have
        $$\int\displaylimits^* \int_{u^{-1}(t)}g\, d\mathcal H^1\, dt \leq \frac{4}{\pi} \int g\cdot (\rho_u \chi_{\mathcal A_u}+ \lip(u)\chi_{X\setminus \mathcal A_u})\, d\mathcal H^2.$$\label{ca:iii}
    \end{enumerate}
\end{thm}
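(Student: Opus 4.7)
My plan is to reduce Theorem \ref{theorem:coarea:introduction} to the monotone coarea inequality of \cite{EIR22} via a level-set decomposition of $X$, handling the set $X\setminus\mathcal{A}_u$ in the Lipschitz case by Eilenberg's inequality.

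For part \ref{ca:i}, I write $\mathcal{A}_u = \bigcup_{n\in\N} A_n$ with
\[A_n = \{x \in X : \text{the component of } u^{-1}(u(x)) \text{ through } x \text{ has diameter } \geq 1/n\},\]
and show that each $A_n$ is closed in every compact subset of $X$. Given $x_k \to x$ with $x_k \in A_n$, I select continua $K_k \subset u^{-1}(u(x_k))$ through $x_k$ of diameter $\geq 1/n$; Blaschke's selection theorem yields a Hausdorff-convergent subsequence $K_k \to K$, and continuity of $u$ combined with $u(x_k) \to u(x)$ forces $K \subset u^{-1}(u(x))$, so $K$ witnesses $x \in A_n$. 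Thus $\mathcal{A}_u$ is Borel.

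For part \ref{ca:ii}, the key observation is that for any rationals $s < t$, every connected component $W$ of $u^{-1}((s,t))$ is open with $\partial W \subset u^{-1}(\{s,t\})$, so $u|_{\bar W}$ attains its extrema on $\partial W$, i.e., $u|_W$ is monotone in the sense of \cite{EIR22}. Enumerate such components over all rational pairs $(s,t)$ as $\{(W_i,(s_i,t_i))\}_{i\in\N}$, and form the disjointification $E_i = W_i\setminus\bigcup_{j<i}W_j$. Every $x \in \mathcal{A}_u$ lies in some $W_i$, so $\mathcal{A}_u \subset \bigsqcup_i E_i$. Applying the monotone coarea inequality of \cite{EIR22} to each $(W_i, u|_{W_i})$, with upper gradient $\rho_u\chi_{W_i}$ and test function $g\chi_{E_i\cap\mathcal{A}_u}$, then summing over $i$ and invoking Fubini/monotone convergence, yields the inequality in part \ref{ca:ii}. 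Precompactness of $W_i$ can be arranged by intersecting with an exhaustion of $X$ by precompact open sets.

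For part \ref{ca:iii}, since $u$ is Lipschitz, $\lip(u)$ is a $2$-weak upper gradient of $u$, and Eilenberg's inequality applied to the Borel set $X\setminus\mathcal{A}_u$ gives
\[\int\displaylimits^*\int_{u^{-1}(r)\setminus\mathcal{A}_u}g\, d\hmm\, dr\leq \frac{4}{\pi}\int_{X\setminus\mathcal{A}_u}g\cdot\lip(u)\, d\hm.\]
Summing this with part \ref{ca:ii} gives the full inequality. The main obstacle is in part \ref{ca:ii}: verifying that the monotone restrictions $(W_i, u|_{W_i})$ satisfy the hypotheses of \cite{EIR22}, in particular that the upper gradient localizes properly and that the domains can be taken precompact, and ensuring that the summation over the disjointification $\{E_i\}$ preserves the sharp constant $4/\pi$ without multiplicity losses.
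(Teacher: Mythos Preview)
Your arguments for parts \ref{ca:i} and \ref{ca:iii} match the paper's. The gap is in part \ref{ca:ii}: the claim that $u|_W$ is monotone whenever $W$ is a connected component of $u^{-1}((s,t))$ is false. Your justification, that $\partial W\subset u^{-1}(\{s,t\})$ forces the extrema of $u|_{\bar W}$ onto $\partial W$, breaks down when $u(\partial W)$ meets only one of the values $s,t$. Take $X=\R^2$, $u(z)=-|z|^2$, $(s,t)=(-1,1)$: then $W=B(0,1)$, $u(\partial W)=\{-1\}$, but $\max_{\bar W}u=0$ is attained at the interior point $0$; for $\Omega=B(0,1/2)\subset W$ the maximum is again interior, so $u|_W$ is not monotone in the sense required by \cite{EIR22}, which is a condition on \emph{every} precompact open subset of the domain, not just on $W$ itself. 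In this example the obstruction sits at a point of $X\setminus\mathcal A_u$, but that does not rescue the argument: a continuous Sobolev function can have strict local extrema on a dense set, in which case $u$ is monotone on \emph{no} open subset and no open cover of $\mathcal A_u$ of the kind you need can exist.

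The paper avoids \cite{EIR22} entirely for part \ref{ca:ii}. On a closed disk it pulls $u$ back through the weakly $(4/\pi)$-quasiconformal parametrization $f\colon\bar\D\to X$ supplied by \cite{NR22} (Theorem~\ref{theorem:wqc}), applies the classical Euclidean coarea formula to $v=u\circ f\in W^{1,2}(\D)$, and transfers the estimate back using the upper-gradient inequality and the area bound for $f$; it is the monotonicity of the \emph{map} $f$, not of $u$, that matches non-degenerate level-set components of $u$ with those of $v$. A general surface is then exhausted by closed disks and the estimates are summed. The constant $4/\pi$ thus enters through the parametrization theorem rather than through any reduction to monotone Sobolev functions.
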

Here $\lip(u)$ denotes the pointwise Lipschitz constant of a Lipschitz function $u\colon X\to \R$, defined by
$$\lip(u)(x)=\limsup_{y\to x}\frac{|u(y)-u(x)|}{d(x,y)}. $$
Also, $\int^*$ denotes the upper integral, which is equal to the Lebesgue integral for measurable functions. The main result of \cite{EIR22} (for $p\geq 2$) states that \ref{ca:ii} holds with the additional assumption that $u$ is monotone and with $u^{-1}(t)$ in place of $u^{-1}(t)\cap \mathcal A_u$. Since the level sets of monotone functions are always non-degenerate (see e.g.\ \cite{Nt:monotone}*{Corollary 2.8}), we see that $\mathcal A_u=X$ when $u$ is monotone; hence our theorem implies the main result of \cite{EIR22} for $p\geq 2$.  Moreover, without the monotonicity assumption, we note that part \ref{ca:ii} is optimal and does not hold for the full level sets $u^{-1}(t)$ if we do not restrict to $\mathcal A_u$, even if $u$ is Lipschitz. A relevant example is provided in \cite{EIR22}*{Section 5}. 

The proof of Theorem \ref{theorem:coarea:introduction} relies on recent developments in the theory of uniformization of metric surfaces. Specifically, we use a result of Romney and the second-named author \cite{NR22}, which states that every metric surface of locally finite Hausdorff $2$-measure admits a weakly quasiconformal parametrization by a Riemannian surface of the same topological type.

\subsection*{Acknowledgments}
The paper was written while the first-named author was visiting Stony Brook University. She wishes to thank the department for their hospitality. The authors would like to thank Stefan Wenger for his comments on a draft of the paper.

\section{Preliminaries}

\subsection{Hausdorff measures}
For a metric space $X$ and $s>0$, the \textit{Hausdorff $s$-measure} of a set $A \subset X$ is defined by
$$\mathcal{H}^s(A) = \lim_{\delta\to 0} \mathcal H^s_{\delta}(A),\,\, \textrm{where}\,\,\,\, \mathcal H^s_\delta(A) =\inf \left\{ \sum_{j=1}^\infty \frac{\omega_s}{2^s} \diam(A_j)^s\right\} $$
and the infimum is taken over all collections of sets $\{A_j\}_{j=1}^\infty$ such that $A \subset \bigcup_{j=1}^\infty A_j$ and $\diam(A_j) < \delta$ for each $j$. Here $\omega_s$ is a positive normalization constant, chosen so that the Hausdorff $n$-measure coincides with Lebesgue measure in $\R^n$. Note that $\omega_1=2$ and $\omega_2=\pi$.  If we need to emphasize the metric $d$ being used for the Hausdorff $s$-measure, we write $\mathcal{H}_{d}^s$ instead of $\mathcal{H}^s$.

We  state the  coarea inequality for Lipschitz functions and the classical coarea formula for Sobolev functions. 
\begin{thm}[Coarea inequality and formula]\label{theorem:coarea:classical}
    Let $X$ be a metric space, $u\colon X\to \R$ be a continuous function, and $g\colon X\to [0,\infty]$ be a Borel function.
    \begin{enumerate}[label=\normalfont(\arabic*)]
        \item If $u$ is Lipschitz, then for $K=4/\pi$ we have
    \begin{align*}
        \int\displaylimits^* \int_{u^{-1}(t)} g\, d\mathcal H^1dt \leq K \int_X g\cdot \lip(u)   \, d\mathcal H^2.
    \end{align*}
   If $X$ is a Riemannian surface, we may take $K=1$.  \label{coarea:1}
        \item If $X$ is an open subset of $\R^2$ and $u\in W^{1,1}_{\loc}(X)$, then
    \begin{align*}
        \int \int_{u^{-1}(t)}g\, d\mathcal H^1dt= \int_X g\cdot |\nabla u|\, d\mathcal H^2.
    \end{align*}\label{coarea:2}
    \end{enumerate}
\end{thm}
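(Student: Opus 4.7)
The plan is to treat both items as classical facts from geometric measure theory: (1) is an Eilenberg-type coarea inequality, and (2) is Federer's coarea formula in the Euclidean setting. For (1) with $g \equiv 1$, I would fix $\delta > 0$ and take an arbitrary countable cover $\{A_j\}$ of $X$ with $\diam A_j < \delta$. Since $u$ is $L$-Lipschitz, $\diam u(A_j) \leq L\diam A_j$, and for each $t \in \R$ the level set $u^{-1}(t)$ is covered by those $A_j$ for which $t \in u(A_j)$, so $\mathcal H^1_\delta(u^{-1}(t)) \leq \sum_j \chi_{u(A_j)}(t)\diam A_j$. Integrating in $t$ and applying Fubini gives
\begin{equation*}
\int^* \mathcal H^1_\delta(u^{-1}(t))\,dt \;\leq\; \sum_j \diam(A_j)\,\mathcal L^1(u(A_j)) \;\leq\; L\sum_j \diam(A_j)^2.
\end{equation*}
Since $\mathcal H^2_\delta(X) = (\pi/4)\inf_{\{A_j\}}\sum_j \diam(A_j)^2$, taking the infimum over admissible covers and then sending $\delta \to 0^+$ yields the bound with constant $4/\pi$.

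The extension to a general Borel $g\geq 0$ is by layer-cake: write $g(x)=\int_0^\infty \chi_{\{g>s\}}(x)\,ds$ and apply the $g\equiv 1$ estimate on each superlevel set $\{g>s\}$ (the argument above localizes without change to any Borel subset of $X$), then integrate in $s$. To replace the global constant $L$ by the pointwise $\lip(u)$, I would localize: for each $\varepsilon>0$ and each $x\in X$, pick a small neighborhood on which $u$ is $(\lip(u)(x)+\varepsilon)$-Lipschitz, cover $X$ by countably many such neighborhoods (e.g.\ after partitioning along the level sets of $\lip(u)$), apply the previous estimate on each piece, sum, and send $\varepsilon\to 0$. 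For the $K=1$ improvement when $X$ is Riemannian, use that $\mathcal H^2$ coincides with the Riemannian area form and invoke the classical smooth coarea formula after approximating $u$ by $C^1$ functions; equivalently, cover by small geodesic balls, on which the ratio of $\diam^2$ to Riemannian area tends to $\pi/4$, thereby eliminating the defect that produces the factor $4/\pi$ in the general metric case.

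Part (2) is Federer's coarea formula in $\R^2$; the standard proof first establishes it for $u\in C^1$ via the change-of-variables formula on $\{\nabla u\neq 0\}$ together with Sard's theorem to handle the critical set, and then extends to $u\in W^{1,1}_{\loc}$ by $W^{1,1}$-approximation by smooth functions and passage to the limit, using lower semicontinuity of the left-hand side in $u$. I do not anticipate a real obstacle here; the main technical point is the bookkeeping that produces the sharp constant $4/\pi$ in (1), which is forced by the normalization $\omega_2/2^2=\pi/4$ in the definition of $\mathcal H^2$. Since both statements are well documented in the geometric-measure-theory literature, in practice I would simply cite them, as the theorem serves only as a preliminary for the substantially more delicate Theorem~\ref{theorem:coarea:introduction}.
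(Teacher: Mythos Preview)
Your instinct at the end is exactly right: the paper does not prove this theorem at all but simply cites the literature (Federer \cite{Fed69}*{Theorem 2.10.25} and \cite{Fed:coarea}*{Theorem 3.1} for part (1), and Mal\'y--Swanson--Ziemer \cite{MalySwansonZiemer:Coarea} for part (2)). So your proposal is aligned with the paper's treatment.

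One remark on the sketch itself: the Eilenberg covering argument for $g\equiv 1$ with the global Lipschitz constant $L$ is correct and is the standard proof, and the layer-cake extension to general $g$ is fine. However, the step where you pass from $L$ to $\lip(u)$ by ``picking a small neighborhood on which $u$ is $(\lip(u)(x)+\varepsilon)$-Lipschitz'' does not work as written: $\lip(u)(x)=\limsup_{y\to x}|u(y)-u(x)|/d(x,y)$ controls only the difference quotients \emph{centered at $x$}, not the Lipschitz constant of $u$ on any neighborhood of $x$. Getting the pointwise $\lip(u)$ on the right-hand side requires a finer argument (e.g.\ the approach in \cite{EIR22}*{Lemma 5.2} or \cite{EsmayliHajlasz:coarea}, which the paper also cites). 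Similarly, the extension of the coarea \emph{formula} from $C^1$ to $W^{1,1}_{\loc}$ is genuinely delicate and is the content of \cite{MalySwansonZiemer:Coarea}; ``lower semicontinuity of the left-hand side'' alone does not give an equality. None of this matters for the paper, since the result is only being quoted.
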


Part \ref{coarea:1} is a consequence of \cite{Fed69}*{Theorem 2.10.25} for general metric spaces $X$ and of \cite{Fed:coarea}*{Theorem 3.1} for Riemannian manifolds with $K=1$. Part \ref{coarea:2} is stated in \cite{MalySwansonZiemer:Coarea} and attributed to Federer. See also \cite{EsmayliHajlasz:coarea} for a more general statement than \ref{coarea:1} and \cite{EIR22}*{Lemma 5.2}.

\subsection{Topological preliminaries}

Let $X$ be a metric space. A \textit{path} or \textit{curve} is a continuous map $\gamma\colon [a,b]\to X$. The \textit{trace} of $\gamma$ is the set $|\gamma|=\gamma([a,b])$. We say that $\gamma$ is a \textit{Jordan arc} if $\gamma$ is injective. Here we allow the possibility $a=b$, in which case $\gamma$ is a \textit{degenerate} Jordan arc.  We say that $\gamma$ is a \textit{Jordan curve} if $\gamma|_{[a,b)}$ is injective and $\gamma(a)=\gamma(b)$. We also say that a {set} $K\subset X$ is a Jordan arc (resp.\ Jordan curve) if there exists a Jordan arc (resp.\ Jordan curve) $\gamma$ with $|\gamma|=K$. The \textit{length} of a curve $\gamma$ is its total variation and is denoted by $\ell(\gamma)$. A \textit{continuum} is a compact and connected metric space. A \textit{Peano continuum} is a locally connected continuum.

\begin{lemma}\label{lemma:triod}
    Let $\{K_i\}_{i\in I}$ be a collection of pairwise disjoint Peano continua in $\R^2$. Then, with the exception of countably many $i\in I$, each $K_i$ is a Jordan arc or a Jordan curve. 
\end{lemma}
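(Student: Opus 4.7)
The plan is to reduce the statement to a classical theorem of R.L.\ Moore on disjoint triods in the plane. Recall that a \emph{triod} is a continuum which is the union of three Jordan arcs $\alpha_1,\alpha_2,\alpha_3$ having a common endpoint $p$ and otherwise pairwise disjoint. Moore's 1928 theorem asserts that any pairwise disjoint family of triods in $\R^2$ is at most countable.

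The first step is to verify the topological fact that every non-degenerate Peano continuum $K \subset \R^2$ which is neither a Jordan arc nor a Jordan curve must contain a triod. I would argue as follows: since $K$ is a locally connected continuum with more than one point, it is arcwise connected and locally arcwise connected. If $K$ is not an arc or simple closed curve, then by the classical characterization theorem (see, e.g., Kuratowski's \emph{Topology}, Vol.~II, or Whyburn's \emph{Analytic Topology}) $K$ contains a point $p$ of order at least $3$, i.e., a branch point. Local arcwise connectedness at $p$ allows one to choose three pairwise distinct small arc-components around $p$ and, within each, a Jordan arc from $p$ to a point in that component. These three arcs, truncated near $p$ if needed, form the desired triod in $K$.

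The second step is then immediate. For each $i \in I$ such that $K_i$ is neither a Jordan arc nor a Jordan curve, select a triod $T_i \subset K_i$ by the previous step. Since the continua $K_i$ are pairwise disjoint, so are the triods $T_i$, and Moore's theorem implies that the set of such indices $i$ is countable. The degenerate case where $K_i$ consists of a single point is already subsumed under degenerate Jordan arcs by the convention stated just before the lemma, so it requires no separate treatment.

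The main obstacle I anticipate lies in the first step: one must argue not merely that $K$ has a branch point in the abstract order-theoretic sense, but that the three local components at that point can be joined to $p$ by three \emph{genuinely disjoint} Jordan arcs contained in $K$. This is the content of the classical arc-construction lemmas for Peano continua, and is the place where one genuinely uses local connectedness rather than mere connectedness of $K$. Once this is done, the rest of the argument is a direct invocation of Moore's theorem.
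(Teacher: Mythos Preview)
Your proposal is correct and follows essentially the same route as the paper: reduce to Moore's theorem on disjoint triods via the fact that a Peano continuum which is neither a Jordan arc nor a Jordan curve contains a triod. The only difference is that the paper simply cites this topological fact (referring to \cite{Nt:monotone}*{Lemma 2.4}) rather than sketching the branch-point argument you outline.
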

    \begin{proof}
        A \textit{triod} is the union of three non-degenerate Jordan arcs that have a common endpoint, the \textit{junction point}, but are otherwise disjoint. A theorem of Moore \cite{Moore:triods} (see also \cite{Pommerenke:conformal}*{Proposition 2.18}) states that there is no uncountable collection of pairwise disjoint triods in the plane. On the other hand, if a Peano continuum is not a Jordan arc or Jordan curve, then it contains a triod \cite{Nt:monotone}*{Lemma 2.4}. This completes the proof. 
    \end{proof}

\begin{lemma}\label{lemma:peano_hausdorff}
    Let $K$ be a continuum with $\mathcal H^1(K)<\infty$. Then $K$ is a Peano continuum.  
\end{lemma}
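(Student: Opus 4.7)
The plan is to establish the contrapositive: a continuum $K$ with $\diam(K)>0$ that is not locally connected must have $\mathcal H^1(K)=\infty$. (The degenerate case $K=\{p\}$ is trivially a Peano continuum.) Suppose $K$ fails to be locally connected at some point $p\in K$. Since failure of local connectedness at $p$ persists when testing with smaller balls, I may choose $\epsilon>0$ small enough that $K \not\subset \bar B(p,\epsilon)$ and the component $C$ of $p$ in $K \cap \bar B(p,\epsilon)$ is not a neighborhood of $p$ in $K$. Consequently there is a sequence $p_n\to p$ with $p_n \in K\setminus C$; for $n$ large, $p_n \in K \cap \bar B(p,\epsilon)$, and I let $K_n$ denote the component of $p_n$ in $K \cap \bar B(p,\epsilon)$. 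Distinct $K_n$'s are then pairwise disjoint and each is disjoint from $C$.

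The key step, and the main obstacle I expect, is showing that infinitely many of the $K_n$ are \emph{distinct}. If only finitely many distinct components occurred, a pigeonhole argument would yield one component $K_\infty$ containing $p_n$ for infinitely many $n$, so $p \in \bar{K_\infty}$. However, $K \cap \bar B(p,\epsilon)$ is compact, so its connected components are closed in $K$; hence $\bar{K_\infty}=K_\infty$, forcing $p\in K_\infty$ and contradicting $p\in C$ with $K_\infty\cap C=\emptyset$. This closedness observation is the pivotal technical step that keeps the argument elementary and avoids invoking heavier machinery from continuum theory.

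Once distinctness is secured, the boundary bumping theorem applies (since $K\not\subset \bar B(p,\epsilon)$) and forces each $K_n$ to contain a point $q_n$ with $d(q_n,p)=\epsilon$. Combined with $d(p,p_n)\to 0$, this gives $\diam(K_n)\geq \epsilon - d(p,p_n) \geq \epsilon/2$ for $n$ large. To conclude, I invoke the elementary inequality $\mathcal H^1(D)\geq \diam(D)$ valid for any continuum $D$: this follows by pushing $D$ forward under the $1$-Lipschitz map $d(a,\cdot)\colon D\to\R$ for $a\in D$ realizing the diameter, noting that its image is a connected subset of $\R$ containing $[0,\diam(D)]$, and that Hausdorff $1$-measure does not increase under $1$-Lipschitz maps. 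Summing over the pairwise disjoint $K_n\subset K$ then yields $\mathcal H^1(K)\geq \sum_n \mathcal H^1(K_n) \geq \sum_n \epsilon/2 =\infty$, the desired contradiction.
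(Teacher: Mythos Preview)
Your argument is correct. The key steps---boundary bumping applied to components of $K\cap\bar B(p,\epsilon)$, the pigeonhole/closedness argument forcing infinitely many distinct components, and the inequality $\mathcal H^1(D)\geq\diam(D)$ for continua---all go through as written. One small point worth making explicit: boundary bumping is usually stated for closures of open sets, whereas you work with the closed ball $\bar B(p,\epsilon)$, which in a general metric space need not equal $\overline{B(p,\epsilon)}$. The conclusion still holds, since a component $K_n$ of $K\cap\bar B(p,\epsilon)$ avoiding the sphere $\{d(\cdot,p)=\epsilon\}$ would be separated from its complement by a relatively clopen set lying inside the open ball, and this set would then be clopen in the connected space $K$; but a sentence acknowledging this would tighten the writeup.

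Your route is genuinely different from the paper's. The paper simply cites the Eilenberg--Harrold theorem (a continuum with $\mathcal H^1(K)<\infty$ is a continuous image of $[0,1]$, in fact with length $\leq 2\mathcal H^1(K)$) and then invokes Hahn--Mazurkiewicz. That approach is a two-line citation but imports a substantial classical result, and it actually yields more than is needed: a rectifiable parametrization. Your argument is longer but entirely self-contained, using only elementary continuum theory and the $1$-Lipschitz projection trick; it gives exactly local connectedness and nothing more. For the purposes of this lemma either is adequate, though the paper later uses the Eilenberg--Harrold parametrization implicitly (via Corollary~2.5), so that citation is not wasted.
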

\begin{proof}
       If $\mathcal H^1(K)<\infty$, a result of Eilenberg--Harrold \cite{EilenbergHarrold:Curves}*{Theorem 2} states that there exists a continuous and surjective mapping $\gamma\colon  [0,1] \to K$ (with $\ell(\gamma)\leq 2\mathcal H^1(K)-\diam(K)$). By the Hahn--Mazurkiewicz theorem \cite{Willard:topology}*{Theorem 31.5}, Peano continua are characterized as continuous images of the unit interval.  
\end{proof}

\begin{lemma}[\cite{BuragoBuragoIvanov:metric}*{Theorem 2.6.2}]\label{lemma:length_hausdorff}
    Let $X$ be a metric space and let $\gamma \colon [a,b]\to X$ be a curve. Then $\ell(\gamma)\geq \mathcal H^1(|\gamma|)$. Moreover, if $\gamma$ is a Jordan arc or Jordan curve, then $\ell (\gamma)=\mathcal H^1( |\gamma|)$. 
\end{lemma}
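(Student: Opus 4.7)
The lemma splits naturally into two separate statements, and I would handle them one after the other. For the universal inequality $\ell(\gamma)\geq \mathcal H^1(|\gamma|)$, the plan is to reduce to the rectifiable case (if $\ell(\gamma)=\infty$ the inequality is vacuous) and then to exploit the basic fact that 1-Lipschitz maps do not increase Hausdorff measure. Concretely, after reparametrizing $\gamma$ by arclength I obtain a curve $\tilde\gamma\colon[0,\ell(\gamma)]\to X$ that is 1-Lipschitz and satisfies $|\tilde\gamma|=|\gamma|$. Therefore
\[
\mathcal H^1(|\gamma|)=\mathcal H^1(\tilde\gamma([0,\ell(\gamma)]))\leq \mathcal H^1([0,\ell(\gamma)])=\ell(\gamma),
\]
using $\omega_1=2$ to get the last equality. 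The only subtlety here is establishing that the arclength reparametrization of an arbitrary rectifiable continuous curve is well-defined and 1-Lipschitz, which is a standard construction using the length function $s\colon[a,b]\to[0,\ell(\gamma)]$ and the fact that $s$ is non-decreasing and continuous.

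For the reverse inequality in the case of a Jordan arc, the above lemma is of no use, since the inverse of an injective 1-Lipschitz map is typically not 1-Lipschitz. Instead I would argue using additivity of $\mathcal H^1$ on nearly disjoint connected pieces, combined with the elementary bound $\mathcal H^1(K)\geq \diam(K)$ valid for any connected metric space $K$. The plan is: parametrize $\gamma$ by arclength to obtain an injective $\tilde\gamma\colon[0,L]\to X$, pick an arbitrary partition $0=t_0<t_1<\cdots<t_n=L$, and consider the sub-arcs $K_i=\tilde\gamma([t_{i-1},t_i])$. Each $K_i$ is a non-degenerate continuum, so $\mathcal H^1(K_i)\geq \diam(K_i)\geq d(\tilde\gamma(t_{i-1}),\tilde\gamma(t_i))$. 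Injectivity of $\tilde\gamma$ guarantees that the $K_i$ are pairwise disjoint apart from the finitely many endpoints $\tilde\gamma(t_i)$, which have zero $\mathcal H^1$-measure, so
\[
\mathcal H^1(|\gamma|)\geq \sum_{i=1}^n \mathcal H^1(K_i)\geq \sum_{i=1}^n d(\tilde\gamma(t_{i-1}),\tilde\gamma(t_i)).
\]
Taking the supremum over partitions yields $\mathcal H^1(|\gamma|)\geq \ell(\gamma)$. If $\ell(\gamma)=\infty$, the same chain of inequalities shows that the right-hand side is unbounded, forcing $\mathcal H^1(|\gamma|)=\infty$ as well. For a Jordan curve I would cut it at one point into a Jordan arc and apply the arc case; the single removed point contributes nothing to $\mathcal H^1$ or to $\ell$.

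The main technical point is the injectivity-based disjointness used to additively split $\mathcal H^1$: one must verify that the common endpoints of consecutive $K_i$'s are negligible and that different $K_i$, $K_j$ with $|i-j|\geq 2$ are truly disjoint, which is exactly where the Jordan-arc hypothesis is used. Every other ingredient (arclength reparametrization, Lipschitz estimate for Hausdorff measure, and $\mathcal H^1(K)\geq\diam(K)$ for connected $K$) is classical and requires no new idea beyond the standard metric-geometry toolkit.
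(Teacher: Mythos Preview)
Your argument is correct and is essentially the standard proof found in the cited reference. Note that the paper itself does not supply a proof of this lemma; it simply quotes \cite{BuragoBuragoIvanov:metric}*{Theorem 2.6.2}, where precisely the argument you outline (arclength reparametrization plus the Lipschitz bound for $\mathcal H^1$ in one direction, and the estimate $\mathcal H^1(K)\geq\diam(K)$ for connected $K$ together with additivity over sub-arcs in the other) appears. Two minor remarks: the arclength reparametrization in the Jordan-arc step is unnecessary---the partition argument works verbatim with the original parametrization $\gamma$ on $[a,b]$, which also handles the case $\ell(\gamma)=\infty$ uniformly; and for the Jordan-curve case it is cleanest to split at an interior parameter $c\in(a,b)$ into \emph{two} Jordan arcs $\gamma|_{[a,c]}$ and $\gamma|_{[c,b]}$ rather than ``one'' arc, but this is only a matter of phrasing.
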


We state a consequence of Lemmas \ref{lemma:triod}, \ref{lemma:peano_hausdorff}, and \ref{lemma:length_hausdorff}, and of the existence of arclength parametrizations of rectifiable curves \cite{HKST:15}*{Section 5.1}.  

\begin{corollary}\label{corollary:parametrizations}
    Let $X$ be a metric space homeomorphic to a subset of $\R^2$. Let $\{K_i\}_{i\in I}$ be a collection of pairwise disjoint continua in $X$ with $\mathcal H^1(K_i)<\infty$ for each $i\in I$. Then, with the exception of countably many $i\in I$, each $K_i$ is a Jordan arc or a Jordan curve and there exists a Lipschitz parametrization $\gamma\colon [a_i,b_i]\to K_i$ that is injective in $[a_i,b_i)$. 
\end{corollary}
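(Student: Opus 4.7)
The plan is to chain the three preceding lemmas together, with only a modest additional argument for the Lipschitz parametrization. First, since each $K_i$ is a continuum with $\mathcal H^1(K_i)<\infty$, Lemma \ref{lemma:peano_hausdorff} gives that every $K_i$ is a Peano continuum. Being a Peano continuum, a Jordan arc, or a Jordan curve are all topological properties preserved by homeomorphisms, so after pushing the collection forward by the homeomorphism $h\colon X\to h(X)\subset\R^2$, I may apply Lemma \ref{lemma:triod} to $\{h(K_i)\}_{i\in I}$: this yields that, with the exception of countably many indices, each $h(K_i)$ (and hence each $K_i$) is a Jordan arc or a Jordan curve.

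For each such $K_i$, I would then upgrade a topological parametrization to a Lipschitz one by reparametrizing by arclength. Pick any continuous parametrization $\eta\colon[0,1]\to K_i$ that is injective on $[0,1)$. By Lemma \ref{lemma:length_hausdorff}, $\ell(\eta)=\mathcal H^1(K_i)<\infty$, so $\eta$ is rectifiable. The standard arclength reparametrization (see \cite{HKST:15}*{Section 5.1}) then produces a $1$-Lipschitz curve $\gamma\colon[0,\ell(\eta)]\to K_i$ with $\gamma\circ s=\eta$, where $s\colon[0,1]\to[0,\ell(\eta)]$ is the arclength function of $\eta$. Setting $[a_i,b_i]=[0,\ell(\eta)]$ then gives the desired Lipschitz parametrization.

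The one remaining point is checking that $\gamma$ is injective on $[a_i,b_i)$. Since $\eta$ is injective on $[0,1)$, it is not constant on any nondegenerate subinterval of $[0,1)$, and therefore its arclength function $s$ is strictly increasing on $[0,1)$. Hence $s$ restricted to $[0,1)$ is a homeomorphism onto $[0,\ell(\eta))$, so $\gamma=\eta\circ s^{-1}$ inherits injectivity of $\eta$ on $[a_i,b_i)$. This argument is essentially bookkeeping, and I do not expect any serious obstacle; the only thing to watch is that injectivity survives the arclength reparametrization, which is precisely the observation just made.
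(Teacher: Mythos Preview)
Your proposal is correct and follows exactly the route the paper indicates: chain Lemma~\ref{lemma:peano_hausdorff}, Lemma~\ref{lemma:triod} (after pushing forward to $\R^2$), Lemma~\ref{lemma:length_hausdorff}, and the arclength reparametrization from \cite{HKST:15}*{Section~5.1}. The only content the paper leaves implicit is the check that injectivity survives arclength reparametrization, and your observation that $s$ is strictly increasing on $[0,1)$ (because $\eta$ is not constant on any nondegenerate subinterval) handles this cleanly.
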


\begin{lemma}\label{lemma:separation_component}
    Let $X$ be a topological space homeomorphic to $\mathbb S^2$ or to a closed disk. Let $K\subset X$ be a compact set separating two points $a,b\in X$. Then there exists a connected component of $K$ that also separates $a$ and $b$.  
\end{lemma}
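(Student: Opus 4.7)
The plan is to combine a Zorn's lemma argument with the unicoherence of $X$.

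First, let $\mathcal F$ denote the family of closed subsets of $K$ that separate $a$ from $b$ in $X$, ordered by inclusion; I aim to extract a minimal element. For a descending chain $\{F_\alpha\}\subset\mathcal F$, its intersection $F:=\bigcap_\alpha F_\alpha$ still lies in $\mathcal F$: if not, $a$ and $b$ would sit in a common component $W$ of the open set $X\setminus F$; since $X$ is a Peano continuum, $W$ is path-connected and so contains an arc $\gamma$ from $a$ to $b$. Compactness of $\gamma$ together with the chain ordering then forces $\gamma\cap F_{\alpha_0}=\emptyset$ for some single $\alpha_0$, contradicting the fact that $F_{\alpha_0}$ separates $a$ from $b$. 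Zorn's lemma thus yields a minimal element $K^*\in\mathcal F$.

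Next, I will argue that $K^*$ is connected. Suppose otherwise and write $K^*=A\sqcup B$ as a disjoint union of nonempty closed sets. By minimality of $K^*$, neither $A$ nor $B$ separates $a$ from $b$. Since $X$ is homeomorphic to $\mathbb S^2$ or to a closed disk, it is a unicoherent Peano continuum, so Janiszewski's theorem (in the form: if $A,B\subset X$ are closed with $A\cap B$ connected or empty and neither separates two given points, then $A\cup B$ does not separate them either) applies to yield that $K^*=A\cup B$ also fails to separate $a$ from $b$, a contradiction. Hence $K^*$ is connected.

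Finally, since $K^*$ is a connected subset of $K$, it is contained in a single connected component $C$ of $K$. Because $K^*\subseteq C$ and $K^*$ separates $a$ from $b$, the component $C$ does as well, which is the desired conclusion. The crucial technical input is the unicoherence of $X$: the analogous statement fails for general Peano continua (as a theta-graph easily shows), so it is precisely the planar topology of $\mathbb S^2$ and of the disk, encoded by Janiszewski's theorem, that powers the connectedness step.
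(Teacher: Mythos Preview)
Your argument is correct. The paper itself does not prove the lemma: it simply cites Wilder's \textit{Topology of Manifolds} (Lemma II.5.20) for the sphere and Lytchak--Wenger (Lemma 7.1) for the closed disk. By contrast, you supply a self-contained proof via Zorn's lemma plus Janiszewski, and your route in fact works uniformly for every unicoherent Peano continuum, not merely these two cases; so what your argument buys over a bare citation is both transparency and added generality. The Zorn step extracting a minimal closed separator and the Janiszewski step forcing it to be connected together form the standard passage from the ``two closed pieces'' form of Janiszewski to the ``some component separates'' conclusion; for Peano continua these two statements are in fact equivalent characterizations of unicoherence, confirming your closing remark that unicoherence is precisely the engine. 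One small terminological point: classically ``Janiszewski's theorem'' is the planar/spherical statement, so for the closed-disk case you are really invoking its extension to unicoherent Peano continua---the content is identical, but a reference such as Kuratowski, \textit{Topology} II, would make that explicit.
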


In $\mathbb S^2$ this is a consequence of \cite{Wilder:topology}*{Lemma II.5.20, p.~61}. For topological disks the conclusion follows from \cite{LWintrinsic}*{Lemma 7.1}.

Throughout the paper $\inter(X)$ denotes the manifold interior of a surface $X$. The topological interior of a set $A$ in a topological space is denoted by $\inter_{\top}(A)$. Similar notation is adopted for the notion of boundary.

\subsection{Metric Sobolev spaces}\label{section:sobolev}
Let $X$ be a metric space and $\Gamma$ be a family of curves in $X$. A Borel function $\rho\colon X \to [0,\infty]$ is \textit{admissible} for $\Gamma$ if $\int_{\gamma}\rho\, ds\geq 1$
for all rectifiable paths $\gamma\in \Gamma$. We define the \textit{$2$-modulus} of $\Gamma$ as 
$$\mod \Gamma = \inf_\rho \int_X \rho^2 \, d\mathcal H^2,$$
where the infimum is taken over all admissible functions $\rho$ for $\Gamma$. By convention, $\mod \Gamma = \infty$ if there are no admissible functions for $\Gamma$. Observe that we consider $X$ to be equipped with the Hausdorff 2-measure. This definition may be generalized by allowing for an exponent different from $2$ or a different measure, though this generality is not needed for this paper.

Let $h\colon X\to Y$ be a map between metric spaces. We say that a Borel function $g\colon X\to [0,\infty]$ is an \textit{upper gradient} of $h$ if 
\begin{align}\label{ineq:upper_gradient}
    d_Y(h(x),h(y)) \leq \int_{\gamma} g \, ds
\end{align}
for all $x,y\in X$ and every rectifiable path $\gamma$ in $X$ joining $x$ and $y$. This is called the \textit{upper gradient inequality}. If, instead the above inequality holds for all curves $\gamma$ outside a curve family of $2$-modulus zero, then we say that $g$ is a \textit{($2$-)weak upper gradient} of $h$. In this case, there exists a curve family $\Gamma_0$ with $\mod \Gamma_0=0$ such that all paths outside $\Gamma_0$ and all subpaths of such paths satisfy the upper gradient inequality.

{We equip the space $X$ with the Hausdorff $2$-measure $\mathcal{H}^2$.} Let $L^2(X)$ denote the space of $2$-integrable Borel functions from $X$ to the extended real line $\widehat{\mathbb{R}}$, where two functions are identified if they agree $\mathcal{H}^2$-almost everywhere. The Sobolev space $N^{1,2}(X,Y)$ is defined as the space of Borel maps $h \colon X \to Y$ with a $2$-weak upper gradient $g$ in $L^2(X)$ such that the function $x \mapsto d_Y(y,h(x))$ is in $L^2(X)$ for some $y \in Y$. If $Y=\R$, we simply write $N^{1,2}(X)$. The spaces $L_{\loc}^2(X)$ and $N_{\loc}^{1,2}(X, Y)$ are defined in the obvious manner. Each map $h\in N_{\loc}^{1,2}(X,Y)$ has a \textit{minimal} $2$-weak upper gradient $g_h$, in the sense that for any other $2$-weak upper gradient $g$ we have $g_h\leq g$ a.e. See the monograph \cite{HKST:15} for background on metric Sobolev spaces.

We state a consequence of the coarea inequality for Lipschitz functions. 
\begin{lemma}[\cite{EIR22}*{Lemma 2.13}]\label{lemma:avoid}
    Let $X$ be a metric surface of finite Hausdorff $2$-measure and $u\colon X\to \R$ be a Lipschitz function. If $\Gamma_0$ is a curve family in $X$ with $\mod \Gamma_0=0$, then for a.e.\ $t\in \R$, every Lipschitz curve $\gamma\colon[a,b]\to u^{-1}(t)$ that is injective on $[a,b)$ lies outside $\Gamma_0$.
\end{lemma}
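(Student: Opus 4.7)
The plan is to use the standard duality-style characterization of modulus zero together with the coarea inequality for Lipschitz functions.

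First, I would recall that $\mod\Gamma_0=0$ if and only if there exists a Borel function $\rho\colon X\to[0,\infty]$ with $\rho\in L^2(X,\mathcal H^2)$ such that $\int_\gamma \rho\,ds=\infty$ for every rectifiable $\gamma\in\Gamma_0$ (this is obtained by summing $2^{-k}\rho_k$, where each $\rho_k$ is an admissible function for $\Gamma_0$ of $L^2$-norm at most $2^{-k}$). Fix such a $\rho$.

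Next, I would apply the coarea inequality of Theorem~\ref{theorem:coarea:classical}\ref{coarea:1} to the Lipschitz function $u$ with the Borel integrand $g=\rho$. This gives
\[
\int\displaylimits^{*} \int_{u^{-1}(t)} \rho\, d\mathcal H^1\, dt \;\leq\; \frac{4}{\pi}\int_X \rho\cdot \lip(u)\, d\mathcal H^2.
\]
Since $u$ is Lipschitz, $\lip(u)$ is bounded above by the (global) Lipschitz constant $L$, and since $\mathcal H^2(X)<\infty$ and $\rho\in L^2(X)$, the Cauchy--Schwarz inequality bounds the right-hand side by $\tfrac{4L}{\pi}\|\rho\|_{L^2}\mathcal H^2(X)^{1/2}<\infty$. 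Consequently, for a.e.\ $t\in\R$ we have $\int_{u^{-1}(t)}\rho\,d\mathcal H^1<\infty$.

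Now fix such a generic value $t$ and let $\gamma\colon[a,b]\to u^{-1}(t)$ be a Lipschitz curve that is injective on $[a,b)$. Then $\gamma$ is a (possibly degenerate) Jordan arc or Jordan curve, so by Lemma~\ref{lemma:length_hausdorff} its trace $|\gamma|$ has finite $\mathcal H^1$-measure and the arclength parametrization identifies the path integral with the Hausdorff integral:
\[
\int_\gamma \rho\, ds \;=\; \int_{|\gamma|} \rho\, d\mathcal H^1 \;\leq\; \int_{u^{-1}(t)}\rho\, d\mathcal H^1 \;<\;\infty.
\]
Since any curve in $\Gamma_0$ satisfies $\int_{\cdot}\rho\,ds=\infty$, it follows that $\gamma\notin\Gamma_0$, which is the desired conclusion.

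The only point requiring care is the identification $\int_\gamma \rho\,ds=\int_{|\gamma|}\rho\,d\mathcal H^1$ for an injective Lipschitz curve; this is a standard consequence of the one-dimensional area formula applied to the Lipschitz injection $\gamma$ (with multiplicity one except possibly at the single identified endpoint of a Jordan curve, which has $\mathcal H^1$-measure zero). Everything else is bookkeeping, so the main --- and only --- real step is the correct use of the coarea inequality with the $L^2$ test function furnished by $\mod\Gamma_0=0$.
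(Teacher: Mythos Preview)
Your argument is correct. The paper does not supply its own proof of this lemma but simply quotes it from \cite{EIR22}*{Lemma 2.13}; your approach---Fuglede's characterization of $\mod\Gamma_0=0$ via a single $L^2$ weight $\rho$ with $\int_\gamma\rho\,ds=\infty$ on every $\gamma\in\Gamma_0$, followed by the Lipschitz coarea inequality (Theorem~\ref{theorem:coarea:classical}\ref{coarea:1}) to force $\int_{u^{-1}(t)}\rho\,d\mathcal H^1<\infty$ for a.e.\ $t$, and the area formula to identify $\int_\gamma\rho\,ds$ with $\int_{|\gamma|}\rho\,d\mathcal H^1$ for injective Lipschitz curves---is the standard one and is essentially the argument given in \cite{EIR22}.
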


\subsection{Quasiconformal maps}

Let $X,Y$ be metric surfaces of locally finite Hausdorff $2$-measure. Recall that a homeomorphism $h\colon X\to Y$ is {quasiconformal} if there exists $K\geq 1$ such that 
$$K^{-1} \mod\Gamma\leq \mod h(\Gamma)\leq K\mod\Gamma$$
for every curve family $\Gamma$ in $X$. A continuous map between topological spaces is \textit{cell-like} if the preimage of each point is a continuum that is contractible in each of its open neighborhoods. A continuous, surjective, proper, and cell-like map $h\colon X\to Y$ is \emph{weakly quasiconformal} if  there exists $K>0$ such that for every curve family $\Gamma$ in $X$ we have
$$\mod \Gamma\leq K \mod h(\Gamma).$$
In this case, we say that $h$ is weakly $K$-quasiconformal. 

If $X$ and $Y$ are compact surfaces that are homeomorphic to each other, then we may replace cell-likeness with the weaker requirement that $h$ is monotone; that is, the preimage of every {point} is a continuum. In that case, continuous, surjective, and monotone maps from $X$ to $Y$ coincide with uniform limits of homeomorphisms; see \cite{NR22}*{Theorem 6.3} and the references therein. Alternatively, if $X,Y$ have empty boundary, then continuous, proper, and cell-like maps from $X$ to $Y$ also coincide with uniform limits of homeomorphisms, see \cite{Dav86}*{Corollary 25.1A}.

We note that a weakly $K$-quasiconformal map between planar domains is a $K$-quasiconformal homeomorphism. Indeed, by \cite{NR:21}*{Theorem 7.4}, such a map is a homeomorphism. Also, note that a quasiconformal homeomorphism between planar domains is a priori required to satisfy only one modulus inequality, as in the definition of a weakly quasiconformal map; see \cite{LehtoVirtanen:quasiconformal}*{Section I.3}. 

The next  theorem of Williams (\cite{Wil:12}*{Theorem 1.1 and Corollary 3.9}) relates the above definitions of quasiconformality with the ``analytic" definition that relies on upper gradients; see also \cite{NR:21}*{Section 2.4}.

\begin{thm}[Definitions of quasiconformality]\label{theorem:definitions_qc}
Let $X,Y$ be metric surfaces of locally finite Hausdorff $2$-measure, $h\colon X\to Y$ be a continuous map, and $K>0$. The following are equivalent.
\begin{enumerate}[label=\normalfont(\roman*)]
    \item\label{def:i} $h\in N^{1,2}_{\loc}(X,Y)$ and there exists a $2$-weak upper gradient $g$ of $h$ such that for every Borel set $E\subset Y$ we have
    $$\int_{h^{-1}(E)} g^2\, d\mathcal H^2 \leq K \mathcal H^2(E).$$
   \item\label{def:i'}Each point of $X$ has a neighborhood $U$ such that $h|_U\in N^{1,2}(U,Y)$ and there exists a $2$-weak upper gradient $g_U$ of $h|_U$ such that for every Borel set $E\subset Y$ we have
    $$\int_{(h|_{U})^{-1}(E)} g_U^2\, d\mathcal H^2 \leq K \mathcal H^2(E).$$
    \item\label{def:ii} For every curve family $\Gamma$ in $X$ we have
    $$\mod \Gamma \leq K\mod h(\Gamma).$$
\end{enumerate}
\end{thm}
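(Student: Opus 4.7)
My plan is to establish the cycle \ref{def:i}$\Rightarrow$\ref{def:ii}$\Rightarrow$\ref{def:i} and then treat the equivalence \ref{def:i}$\Leftrightarrow$\ref{def:i'} separately via a localization argument. The implication \ref{def:ii}$\Rightarrow$\ref{def:i} is the technical heart of the theorem and is due to Williams \cite{Wil:12}; I would cite it directly rather than reprove it, while giving elementary arguments for the remaining implications.

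For \ref{def:i}$\Rightarrow$\ref{def:ii}, I would fix a curve family $\Gamma$ in $X$ and an admissible Borel function $\rho\colon Y\to[0,\infty]$ for $h(\Gamma)$. Setting $\tilde\rho=(\rho\circ h)\cdot g$, where $g$ is the weak upper gradient from \ref{def:i}, Fuglede's lemma combined with the upper gradient inequality yields
$$\int_{h\circ\gamma}\rho\, ds \leq \int_\gamma (\rho\circ h)\, g \, ds$$
for $\gamma\in\Gamma$ outside a modulus-zero subfamily, so that $\tilde\rho$ is admissible for $\Gamma$ up to a negligible subfamily. The layer-cake identity combined with the hypothesis in \ref{def:i} applied to each superlevel set $\{\rho^2>t\}\subset Y$ then gives
$$\mod\Gamma \leq \int_X (\rho\circ h)^2 g^2 \, d\hm = \int_0^\infty \int_{h^{-1}(\{\rho^2>t\})} g^2 \, d\hm \, dt \leq K\int_Y \rho^2 \, d\hm,$$
and taking the infimum over admissible $\rho$ delivers \ref{def:ii}.

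For \ref{def:ii}$\Rightarrow$\ref{def:i}, I would invoke Williams' theorem \cite{Wil:12}*{Theorem 1.1}: the strategy there is to convert the geometric modulus inequality into an analytic Sobolev structure by establishing absolute continuity of $h$ along almost every curve with a controllable metric derivative, and then to produce the integral inequality over preimages through a Vitali-type covering argument on $Y$. The direction \ref{def:i}$\Rightarrow$\ref{def:i'} is immediate by taking $U=X$, while \ref{def:i'}$\Rightarrow$\ref{def:i} is achieved by covering $X$ with countably many neighborhoods $U_k$ as in \ref{def:i'}, using the locality of minimal $2$-weak upper gradients to patch the $g_{U_k}$ into a global weak upper gradient of $h$, and then deducing the global integral estimate via the already established chain \ref{def:i'}$\Rightarrow$\ref{def:ii}$\Rightarrow$\ref{def:i}. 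The main obstacle is the implication \ref{def:ii}$\Rightarrow$\ref{def:i}, for which Williams' analytic machinery is essential; the other implications are essentially bookkeeping.
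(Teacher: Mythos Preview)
The paper does not prove this theorem at all; it is stated as a result of Williams, with the citation \cite{Wil:12}*{Theorem 1.1 and Corollary 3.9} (and a secondary pointer to \cite{NR:21}*{Section 2.4}) given immediately before the statement. Your proposal is therefore entirely in the same spirit: you correctly identify \ref{def:ii}$\Rightarrow$\ref{def:i} as the substantive direction to be quoted from Williams, and your layer-cake argument for \ref{def:i}$\Rightarrow$\ref{def:ii} is the standard one.

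One small wrinkle in your localization step: for \ref{def:i'}$\Rightarrow$\ref{def:i} you appeal to ``the already established chain \ref{def:i'}$\Rightarrow$\ref{def:ii}$\Rightarrow$\ref{def:i}'', but you have only shown \ref{def:i}$\Rightarrow$\ref{def:ii}, not \ref{def:i'}$\Rightarrow$\ref{def:ii}. Applying your argument on each neighborhood $U$ yields the modulus inequality only for curve families contained in a single $U$, and upgrading this to the global modulus inequality is not automatic (naively summing the local integral bounds over a countable cover overcounts and loses the constant $K$). This passage from the local to the global modulus inequality is precisely what Williams' Corollary~3.9 supplies, which is why the paper cites both Theorem~1.1 and Corollary~3.9 rather than just the former. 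So in the end every nontrivial implication routes through \cite{Wil:12}, and the paper's choice to cite the result wholesale is justified.
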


\begin{thm}[\cite{NR:21}*{Theorem 7.1 and Remark 7.2}]\label{theorem:radon-nikodym}
    Let $X,Y$ be metric surfaces of locally finite Hausdorff $2$-measure and $h\colon X\to Y$ be a weakly $K$-quasiconformal map for some $K>0$. 
    \begin{enumerate}[label=\normalfont(\arabic*)]
        \item The set function $\nu(E)=\mathcal H^2(h(E))$ is a locally finite Borel measure on $X$. Moreover, for a.e.\ $x\in X$ we have 
        $$g_h(x)^2\leq KJ_h(x), \quad \textrm{where}\,\,\, J_h= \frac{d\nu}{d\mathcal H^2}.$$ 
        \item $N(h,y)=1$ for a.e.\ $y\in Y$.
    \end{enumerate}
\end{thm}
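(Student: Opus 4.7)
The plan is to first extract from Theorem \ref{theorem:definitions_qc} the analytic form of weak quasiconformality, then apply Lebesgue differentiation to derive the pointwise Jacobian bound in part (1), and finally to address the multiplicity statement (2) via an area-formula identity, which in turn closes the countable additivity step in (1).

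Invoking the implication (iii)$\Rightarrow$(i) of Theorem \ref{theorem:definitions_qc}, the modulus hypothesis is equivalent to $h\in N^{1,2}_{\loc}(X,Y)$ together with
\[
\int_{h^{-1}(E)} g_h^2 \, d\hm \leq K\,\hm(E) \qquad \text{for every Borel } E\subset Y,
\]
where $g_h$ denotes the minimal $2$-weak upper gradient of $h$ (the theorem provides some $g$, and the bound transfers to $g_h$ since $g_h\leq g$ a.e.). For Borel $E\subset X$, the set $h(E)$ is analytic and hence $\hm$-measurable, while local finiteness of $\nu(E):=\hm(h(E))$ follows from properness of $h$ and local finiteness of $\hm$ on $Y$. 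Monotonicity and subadditivity of $\nu$ are inherited from $\hm$; countable additivity on pairwise disjoint Borel sets reduces to the statement that their images are $\hm$-essentially disjoint, which is equivalent to $N(h,y)=1$ for a.e.\ $y$, so the measure assertion is closed only after part (2).

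For the pointwise inequality in (1), I would apply the Lebesgue differentiation theorem simultaneously to $g_h^2\in L^1_{\loc}(X)$ and to $\nu$: for $\hm$-a.e.\ $x\in X$, both limits
\[
g_h(x)^2 = \lim_{r\to 0}\frac{1}{\hm(B(x,r))}\int_{B(x,r)} g_h^2\,d\hm, \qquad J_h(x) = \lim_{r\to 0}\frac{\nu(B(x,r))}{\hm(B(x,r))}
\]
exist. The inclusion $B(x,r)\subset h^{-1}(h(B(x,r)))$ together with the integral bound above yields $\int_{B(x,r)}g_h^2\,d\hm\leq K\,\nu(B(x,r))$; dividing by $\hm(B(x,r))$ and letting $r\to 0$ gives $g_h(x)^2\leq K J_h(x)$.

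For part (2), the target is the area formula $\int_X J_h\,d\hm = \int_Y N(h,y)\,d\hm(y)$. Granted this, the Radon--Nikodym inequality $\int_X J_h\,d\hm\leq \nu(X)=\hm(Y)$ combined with the surjectivity bound $N(h,y)\geq 1$ gives
\[
\hm(Y)\leq \int_Y N(h,y)\,d\hm(y) = \int_X J_h\,d\hm \leq \hm(Y),
\]
forcing equality throughout and hence $N(h,y)=1$ for a.e.\ $y\in Y$. The hardest step will be the area formula itself, since a weakly quasiconformal map need not be locally Lipschitz, so the classical formulation is not directly applicable. A viable route is to invoke the uniformization theorem of Ntalampekos--Romney (employed elsewhere in the paper) to precompose $h$ with a weakly quasiconformal parametrization of $X$ by a Riemannian surface, reducing to the Sobolev setting on a smooth surface where the area formula is standard; alternatively, one may decompose $X$ into countably many Borel pieces on which $h$ is essentially injective, using cell-likeness of the fibres and measurable selection, and apply Radon--Nikodym piecewise.
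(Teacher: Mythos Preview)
The paper does not prove this theorem; it is quoted verbatim from \cite{NR:21}*{Theorem 7.1 and Remark 7.2} and used as a black box throughout. There is therefore no argument in the present paper to compare your proposal against.

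That said, your sketch has gaps that would have to be closed before it could stand on its own. The Lebesgue differentiation step is unjustified: you assume that for $\hm$-a.e.\ $x\in X$ the averages of $g_h^2$ over balls and the ratios $\nu(B(x,r))/\hm(B(x,r))$ converge, but $X$ is an arbitrary metric surface with locally finite Hausdorff $2$-measure, with no doubling or Vitali-type hypothesis, so neither the Lebesgue differentiation theorem nor the density theorem for Radon measures is available directly. There is also a circularity you flag but do not resolve: you define $J_h$ as the Radon--Nikodym derivative $d\nu/d\hm$, which presupposes that $\nu$ is a measure; you then defer countable additivity of $\nu$ to part (2), yet your argument for (2) invokes the inequality $\int_X J_h\,d\hm\leq \nu(X)$, which is itself a Radon--Nikodym consequence. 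One has to establish that $\nu$ is a measure (or work throughout with outer-measure densities and a genuine covering argument) before any of the differentiation or integration identities are meaningful. Finally, the area formula needed for (2) is only gestured at; the uniformization route you mention requires a Jacobian chain rule for compositions of weakly quasiconformal maps between metric surfaces, which is precisely the sort of machinery that the cited reference develops.
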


Here, $N(h,y)$ denotes the number of preimages of $y$ under $h$.

\subsection{Reciprocal surfaces}\label{section:reciprocal}
 
Let $X$ be a metric surface of locally finite Hausdorff $2$-measure. For a set $G\subset X$ and disjoint sets $E,F\subset G$ we define $\Gamma(E,F;G)$ to be the family of curves in $G$ joining $E$ and $F$. A \textit{quadrilateral} in $X$ is a closed Jordan region $Q$ together with a partition of $\partial Q$ into four non-overlapping edges $\zeta_1,\zeta_2,\zeta_3,\zeta_4\subset \partial Q$ in cyclic order. When we refer to a quadrilateral $Q$, it will be implicitly understood that there exists such a marking on its boundary. We define $\Gamma(Q)=\Gamma(\zeta_1,\zeta_3;Q)$  and $\Gamma^*(Q) =\Gamma(\zeta_2,\zeta_4;Q)$. According to the definition of Rajala \cite{Raj:17}, the metric surface $X$ is {reciprocal} if there exist constants $\kappa,\kappa'\geq 1$ such that 
\begin{align}\label{reciprocality:12}
    \kappa^{-1}\leq \mod \Gamma(Q) \cdot \mod\Gamma^*(Q) \leq \kappa' \quad \textrm{for each quadrilateral $Q\subset X$}
\end{align}
and 
\begin{align}\label{reciprocality:3}
        &\lim_{r\to 0} \mod \Gamma( \bar B(a,r), X\setminus B(a,R);X )=0 \quad \textrm{for each ball $B(a,R)$.} 
\end{align}
By work of Rajala and Romney \cite{RR:19} it is now known that the lower bound in \eqref{reciprocality:12} is always satisfied for some uniform constant $\kappa$. In fact, the optimal constant was shown to be $\kappa=(4/\pi)^2$ \cite{EBC:21}.  Moreover, \eqref{reciprocality:3} follows from the upper bound in \eqref{reciprocality:12}, as was shown by Romney and the second-named author \cite{NR22}. Therefore, we may only require the upper inequality of \eqref{reciprocality:12} in the definition of a reciprocal surface.

Rajala \cite{Raj:17} proved that a metric surface $X$ of locally finite Hausdorff $2$-measure that is homeomorphic to $\R^2$ is $2$-quasiconformally equivalent to an open subset of $\R^2$ if and only if $X$ is reciprocal. This result was generalized to all metric surfaces (with or without boundary) of locally finite Hausdorff $2$-measure, where $\R^2$ is replaced with a Riemannian surface \cites{Iko:19,NR22}. 

More generally, it was shown in \cite{NR22} that any metric surface of locally finite Hausdorff $2$-measure admits a weakly quasiconformal parametrization by a Riemannian surface of the same topological type. The following special case is sufficient for our purposes.

\begin{thm}[\cite{NR22}*{Theorem 1.2}]\label{theorem:wqc}
    Let $X$ be a metric surface of finite Hausdorff $2$-measure that is homeomorphic to a topological closed disk. Then there exists a weakly $(4/\pi)$-quasiconformal map from $\bar \D$ onto $X$. 
\end{thm}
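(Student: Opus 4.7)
My plan is to prove Theorem \ref{theorem:wqc} by an approximation argument that reduces to the case of reciprocal disks, where uniformization is available via the results of Rajala \cite{Raj:17} (extended to the closed-disk case by Ikonen \cite{Iko:19}). The idea is to perturb the metric $d$ on $X$ so that it becomes upper Ahlfors $2$-regular, hence reciprocal, apply reciprocal uniformization to obtain a quasiconformal parametrization of each perturbed surface, and pass to a limit as the perturbation vanishes.

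First, I would fix a homeomorphism $\varphi \colon \bar\D \to X$ and push forward the Euclidean metric on $\bar\D$ to obtain an auxiliary metric $\sigma$ on $X$. Define $d_n = d + \tfrac{1}{n}\sigma$. Each $(X,d_n)$ is upper Ahlfors $2$-regular, since $\sigma$ forces balls to contain a definite amount of Euclidean-type area while the Hausdorff $2$-measure of balls is controlled from above by $r^2$ with constant depending on $n$. By \cite{Raj:17}, upper Ahlfors regularity implies reciprocity, so Ikonen's theorem yields a quasiconformal homeomorphism $u_n \colon \bar\D \to (X,d_n)$ whose QC constant is bounded in terms of the reciprocality constant, which one can arrange to approach the sharp value $4/\pi$ coming from \cite{EBC:21}. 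Normalizing each $u_n$ by fixing the images of three prescribed boundary points prevents degeneration. Equicontinuity of $\{u_n\}$ then follows from the three-point normalization combined with the classical Loewner-type estimate converting small $2$-modulus to small image diameter, and Arzelà--Ascoli gives a subsequential uniform limit $u \colon \bar\D \to X$.

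The final verification that $u$ is weakly $(4/\pi)$-quasiconformal proceeds as follows. Continuity, surjectivity, and properness are immediate from uniform convergence on a compact space. Since $u$ is a uniform limit of homeomorphisms between compact surfaces of the same topological type, $u$ is monotone, and in this setting monotonicity is equivalent to cell-likeness by the discussion preceding Theorem \ref{theorem:wqc}. The modulus inequality $\mod \Gamma \leq (4/\pi) \mod u(\Gamma)$ would follow by passing to the limit in $\mod \Gamma \leq K_n \mod u_n(\Gamma)$ and invoking lower semicontinuity of modulus under uniform convergence of the defining maps.

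The hard part is the sharpness of the constant $4/\pi$: showing that $u$ is weakly $K$-QC for some finite $K$ is the relatively soft portion of the argument, but extracting precisely $K = 4/\pi$ requires controlling the reciprocality of $(X,d_n)$ quantitatively as $n \to \infty$ and guaranteeing that the QC constants tend to $4/\pi$ rather than a larger value. I suspect the naive additive perturbation $d+(1/n)\sigma$ will not achieve the sharp constant on its own, and that a geometry-adapted smoothing, or alternatively a direct energy-minimization construction in the spirit of Lytchak--Wenger's Plateau problem in metric spaces, is needed to produce the sharp constant directly from a minimality property.
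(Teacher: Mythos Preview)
The paper does not prove this statement at all: Theorem \ref{theorem:wqc} is quoted verbatim from \cite{NR22}*{Theorem 1.2} and used as a black box in the proof of Theorem \ref{theorem:coarea:introduction}. So there is no ``paper's own proof'' to compare against, and your proposal is an attempt to reconstruct the argument of \cite{NR22} rather than anything in the present paper.

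That said, your sketch has genuine gaps relative to what is actually done in \cite{NR21,NR22}. First, the claim that $(X,d_n)$ with $d_n=d+\tfrac{1}{n}\sigma$ is upper Ahlfors $2$-regular is not justified and is in fact the crux of the matter: you control $d_n$-balls from below via $\sigma$, but you give no upper bound on $\mathcal H^2_{d_n}$ of such balls, and there is no reason the Hausdorff $2$-measure in the perturbed metric should be comparable to that in $d$. Second, the step ``lower semicontinuity of modulus under uniform convergence of the defining maps'' is false in the generality you invoke it; passing modulus inequalities to limits requires substantially more, typically control of upper gradients in $L^2$ along the sequence together with a Rellich-type compactness, not just uniform convergence of the maps. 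Third, as you yourself note, the additive perturbation cannot deliver the sharp constant $4/\pi$.

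The actual argument in \cite{NR21,NR22} proceeds differently: one approximates $X$ by polyhedral surfaces (triangulated flat surfaces) built from a fine triangulation, uniformizes those classically, and passes to a limit using uniform energy bounds for the approximating conformal maps; the constant $4/\pi$ emerges from the isoperimetric comparison between the Euclidean and $\ell^\infty$ norms on $\R^2$ that governs the area formula for Lipschitz maps into metric spaces. Your closing remark about an energy-minimization construction in the spirit of Lytchak--Wenger is much closer to the truth than the metric-perturbation scheme you lead with.
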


Here $\D$ denotes the open unit disk in the plane. We show that weakly quasiconformal maps can be upgraded to quasiconformal homeomorphisms under certain conditions.

\begin{lemma}\label{lemma:weak_qc_upgrade}
    Let $X,Y$ be metric surfaces without boundary and with locally finite Hausdorff $2$-measure such that $Y$ is reciprocal. Then every weakly quasiconformal map $f\colon X\to Y$ is a quasiconformal homeomorphism, quantitatively. 
\end{lemma}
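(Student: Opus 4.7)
The plan is to reduce to the case of a weakly quasiconformal map into a Riemannian surface and then invoke the planar theory cited in the preceding discussion in local charts. Since $Y$ is reciprocal, without boundary, and of locally finite Hausdorff $2$-measure, the uniformization results from Section~\ref{section:reciprocal} (\cite{Iko:19}, \cite{NR22}) furnish a quasiconformal homeomorphism $\phi\colon Y'\to Y$ from a Riemannian surface $Y'$ of the same topological type, with a quantitative constant $K_\phi$. I would set $g=\phi^{-1}\circ f\colon X\to Y'$; then $g$ is continuous, surjective, proper, and cell-like, since these properties transfer through the homeomorphism $\phi^{-1}$, and for every curve family $\Gamma$ in $X$,
\[
\mod\Gamma\;\leq\;K\mod f(\Gamma)\;\leq\;KK_\phi\mod g(\Gamma),
\]
so $g$ is weakly $KK_\phi$-quasiconformal. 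The problem is thereby reduced to upgrading a weakly quasiconformal map with Riemannian target to a genuine quasiconformal homeomorphism.

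The key step is to show that $g$ is a homeomorphism. For each $y'\in Y'$ I would choose a neighborhood $V$ of $y'$ bi-Lipschitz to a Euclidean disk $D\subset\R^2$ via some $\psi\colon V\to D$, which is possible because $Y'$ is Riemannian, and a topological disk $U\subset X$ with $g(U)\subset V$. Then $\psi\circ g|_U\colon U\to D$ is a weakly quasiconformal map from the topologically planar metric surface $U$ into the planar domain $D$, and \cite{NR:21}*{Theorem 7.4}, applied locally, yields that this composition is a quasiconformal homeomorphism with two-sided modulus bounds. Hence $g|_U$ is a homeomorphism onto its image. Combining local injectivity of $g$ with cell-likeness---each fibre $g^{-1}(y')$ is a continuum and must reduce to a singleton once $g$ is locally one-to-one---gives global injectivity, so $g\colon X\to Y'$ is a homeomorphism. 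The local two-sided modulus bounds obtained in each chart then extend to a global bound by the local analytic characterization of quasiconformality (Theorem~\ref{theorem:definitions_qc}\ref{def:i'}), since condition \ref{def:i'} is purely local and the local constants are uniform. Thus $g$ is quasiconformal, and so is $f=\phi\circ g$, with constants depending quantitatively on $K$ and on the reciprocality constants of $Y$.

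The main obstacle is the injectivity step. The planar result \cite{NR:21}*{Theorem 7.4} is invoked in the discussion preceding this lemma for weakly quasiconformal maps between planar domains, whereas here the source $U$ is only an abstract topological disk inside a metric surface. I would first verify that the cited result applies in this slightly broader generality---its proof is essentially local and insensitive to whether $U$ is embedded in $\R^2$, as long as $U$ is a metric surface homeomorphic to a planar domain---and then carry out the patching from local to global injectivity using cell-likeness together with the absence of boundary, which together prevent any fibre of $g$ from being a non-degenerate continuum once local injectivity is in place.
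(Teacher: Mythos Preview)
There is a genuine gap in the step where you claim to obtain \emph{two-sided} modulus bounds in local charts. The result \cite{NR:21}*{Theorem 7.4} gives only that a weakly quasiconformal map is a homeomorphism (under a hypothesis on the target, namely that the family of non-constant curves through each point has zero modulus); it does not by itself reverse the modulus inequality. The two-sided bound in the discussion preceding the lemma comes from classical planar theory (\cite{LehtoVirtanen:quasiconformal}*{Section I.3}), which requires \emph{both} source and target to be planar domains. In your localization the source $U$ is an arbitrary metric topological disk, and such a surface need not be reciprocal, so a one-sided bound cannot be upgraded. Your argument never establishes that $X$ is reciprocal, and without that the reverse inequality is simply unavailable.

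The paper closes this gap as follows: it applies \cite{NR:21}*{Theorem 7.4} \emph{globally} to $f\colon X\to Y$ (directly using that $Y$ is reciprocal, so condition \eqref{reciprocality:3} holds---no passage to a Riemannian model is needed), obtaining that $f$ is a homeomorphism. The one-sided modulus inequality together with the upper reciprocity bound on $Y$ then forces the same bound on $X$, so $X$ is reciprocal too. Only now can Rajala's theorem be invoked on \emph{both} sides to produce planar charts $\psi\colon U'\to U\subset\R^2$ and $\phi\colon V'\to V\subset\R^2$, reducing to a weakly quasiconformal map between planar domains, where the classical theory yields the two-sided bound.

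A secondary issue: your restriction $\psi\circ g|_U\colon U\to D$ is not surjective onto $D$, and even as a map onto its image the properness and cell-likeness required by the definition of weak quasiconformality are not automatic for restrictions to open sets. So the phrase ``applied locally'' is not justified as written. The paper avoids this by applying the injectivity result once, globally.
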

\begin{proof}
    Let $f\colon X\to Y$ be a weakly $K$-quasiconformal map for some $K>0$. Since $Y$ is reciprocal, condition \eqref{reciprocality:3} implies that the modulus of the family of non-constant curves passing through any point of $Y$ is zero. By \cite{NR:21}*{Theorem 7.4} we conclude that $f$ is a homeomorphism. Now, the reciprocity of $Y$ implies that the upper bound in \eqref{reciprocality:12} is satisfied for $X$ as well. Therefore, $X$ is reciprocal. 
    
    Consider a domain $V'\subset Y$ that is homeomorphic to $\R^2$. By Rajala's theorem, there exists a $2$-quasiconformal homeomorphism $\phi$ from $V'$ onto a domain $V\subset \R^2$. The set $U'=f^{-1}(V')$ is homeomorphic to $\R^2$, so by Rajala's theorem there exists a $2$-quasiconformal homeomorphism $\psi$ from $U'$ onto a domain $U\subset \R^2$. The composition $g=\phi\circ f\circ \psi^{-1}$ is a weakly $4K$-quasiconformal map from $U$ onto $V$. Since the domains are planar, $g$ is a $4K$-quasiconformal homeomorphism. Therefore, $f$ is a $16K$-quasiconformal homeomorphism from $U'$ onto $V'$. By Theorem \ref{theorem:definitions_qc}, quasiconformality is a local condition, so $f\colon X\to Y$ is $16K$-quasiconformal. 
\end{proof}

\subsection{Metric differentiability}
Throughout the section we let $U\subset\R^2$ be a domain and $Y$ be a metric space. We say that a map $h\colon U\to Y$ is \emph{approximately metrically differentiable} at a point $x\in U$ if there exists a seminorm $N_x$ on $\R^2$ for which
    $$\ap\lim_{y\to x}\frac{d(h(y),h(x))-N_x(y-x)}{y-x}=0.$$
Here, $\ap\lim$ denotes the approximate limit as defined in \cite{EG92}*{Section 1.7.2}. In this case, the seminorm $N_x$ is unique, is denoted by $\ap\md h_x$, and we call it the \emph{approximate metric derivative} of $h$ at $x$. 

\begin{proposition}[\cite{LWarea}*{Proposition 4.3}]\label{prop:decomp-apmd}
    If $h\in N^{1,2}(U,Y)$ then there exist countably many pairwise disjoint compact sets $K_i\subset U$, $i\in\N$, such that $\mathcal H^2(U\setminus\bigcup_{i\in\N}K_i)=0$ with the following property. For every $i\in\N$ and every $\varepsilon>0$ there exists $r_i(\varepsilon)>0$ such that $h$ is approximately metrically differentiable at every $x\in K_i$ and
    \begin{align*}
        |d(h(x),h(x+v))-\ap\md h_x(v)|\leq\varepsilon|v|
    \end{align*}
    for all $x\in K_i$ and all $v\in\R^2$ with $|v|\leq r_i(\varepsilon)$ and $x+v\in K_i$.
\end{proposition}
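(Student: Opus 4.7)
The plan is to first establish approximate metric differentiability almost everywhere via a Lusin-type Lipschitz decomposition combined with Kirchheim's classical theorem on metric differentiability, and then to upgrade this pointwise conclusion to the uniform estimate on compact sets by means of the Lebesgue density theorem and Egoroff's theorem.

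First I would produce a Lusin--Lipschitz decomposition of $h$. Applying a standard Haj\l asz-type maximal function argument to the minimal $2$-weak upper gradient $g_h$, one obtains pairwise disjoint compact sets $F_n\subset U$ with $\hm(U\setminus\bigcup_n F_n)=0$ such that each $h|_{F_n}$ is $L_n$-Lipschitz. Composing with the Kuratowski isometric embedding $Y\hookrightarrow\ell^\infty(Y)$ and extending coordinatewise by McShane, each $h|_{F_n}$ becomes the restriction of an $L_n$-Lipschitz map $\tilde h_n\colon\R^2\to\ell^\infty(Y)$, to which Kirchheim's metric differentiability theorem applies and yields a seminorm $\md(\tilde h_n)_x$ at a.e.\ $x\in\R^2$. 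At Lebesgue density points $x$ of $F_n$ at which this derivative exists, a short density-point calculation shows that the approximate metric derivative of $h$ itself exists at $x$ and coincides with $\md(\tilde h_n)_x$: the set where $\tilde h_n(y)$ is well approximated by $\md(\tilde h_n)_x(y-x)$ contains $F_n\cap B(x,\delta)$ for some small $\delta$, which has density one at $x$.

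For the quantitative upgrade, on each $F_n$ I would consider
\begin{align*}
\Phi_n(x,r)=\sup\left\{\frac{|d(h(x+v),h(x))-\ap\md h_x(v)|}{|v|}:0<|v|\leq r,\ x+v\in F_n\right\},
\end{align*}
which is non-decreasing in $r$. The previous step gives $\Phi_n(x,r)\to 0$ as $r\to 0$ for a.e.\ $x\in F_n$. Applying Egoroff's theorem to the sequence $\Phi_n(\cdot,1/k)$ on $F_n$, together with inner regularity of $\hm$ on $\R^2$, produces compact subsets of $F_n$ on which this convergence is uniform; monotonicity in $r$ then promotes the uniform smallness along $r_k=1/k$ to the full bound for every $r\leq r_i(\varepsilon)$. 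A diagonal exhaustion over $n\in\N$ and $\varepsilon_k=1/k$ finally yields the family $\{K_i\}_{i\in\N}$ as claimed.

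The main obstacle is precisely the identification $\ap\md h_x=\md(\tilde h_n)_x$ together with the pointwise convergence $\Phi_n(x,r)\to 0$ for a.e.\ $x\in F_n$: \emph{approximate} metric differentiability only controls $h$ along sequences of density one, so bad directions $v$ are not ruled out a priori. The Lipschitz bound on $F_n$ is used precisely to convert the density-one control at $x$ into an honest estimate by approximating an arbitrary $v$ with $x+v\in F_n$ by a nearby good direction, with the Lipschitz constant controlling the error and the remaining slack absorbed into $\varepsilon|v|$ via the Lebesgue density theorem. Carrying out this transfer with quantitative constants, uniformly in $v$, is the technical heart of the argument.
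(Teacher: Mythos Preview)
The paper does not prove this proposition; it is quoted verbatim from \cite{LWarea}*{Proposition 4.3}. Your outline is essentially the standard proof found there: Lusin--Lipschitz decomposition, Kuratowski embedding plus McShane extension, Kirchheim's metric differentiability theorem, and then Egoroff to get uniformity on compact pieces.

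However, the step you flag as the ``main obstacle'' is not an obstacle at all, and your proposed workaround (approximating an arbitrary $v$ with $x+v\in F_n$ by a nearby ``good'' direction and using the Lipschitz bound to control the error) is unnecessary. The point is that Kirchheim's theorem gives a \emph{genuine} metric derivative for the Lipschitz extension $\tilde h_n$: for a.e.\ $x$,
\[
\lim_{v\to 0}\frac{\bigl|\,\|\tilde h_n(x+v)-\tilde h_n(x)\|-\md(\tilde h_n)_x(v)\,\bigr|}{|v|}=0,
\]
a full limit, not an approximate one. Since $h=\tilde h_n$ on $F_n$, for any $v$ with $x+v\in F_n$ you have $d(h(x+v),h(x))=\|\tilde h_n(x+v)-\tilde h_n(x)\|$, and the estimate in $\Phi_n$ follows immediately from Kirchheim's conclusion with no density-point gymnastics. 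The identification $\ap\md h_x=\md(\tilde h_n)_x$ at Lebesgue density points of $F_n$ is then a one-line observation (the approximate limit defining $\ap\md h_x$ can be read off along $F_n$, where the two maps agree), and $\Phi_n(x,r)\to 0$ pointwise a.e.\ is automatic. Everything after that (Egoroff, inner regularity, diagonal exhaustion) is exactly as you describe.
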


In particular, every map $h\in N^{1,2}(U,Y)$ is approximately metrically differentiable at a.e.\ $x\in U$.

\begin{lemma}[\cite{LWintrinsic}*{Lemma 3.1}]\label{lemma:length-apmd}
    If $h\in N^{1,2}(U,Y)$ then
    $$\ell(h\circ\gamma)=\int_a^b \ap\md h_{\gamma(t)}(\dot\gamma(t))\,dt$$ for every curve $\gamma\colon [a,b]\to U$ parametrized by arclength outside a family $\Gamma_0$ with $\mod\Gamma_0=0$. 
\end{lemma}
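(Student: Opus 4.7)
My plan is to reduce the identity to a pointwise comparison of the metric speed $|(h\circ\gamma)'|$ with $\ap\md h_{\gamma(t)}(\dot\gamma(t))$ along curves that avoid a small exceptional family, exploiting the uniform estimates on the compact cover provided by Proposition \ref{prop:decomp-apmd}. First, I would apply that proposition to obtain pairwise disjoint compact sets $K_i \subset U$, $i\in\N$, with $\mathcal H^2(N)=0$ for $N:=U\setminus\bigcup_i K_i$ and the uniform control $|d(h(x),h(x+v))-\ap\md h_x(v)|\leq\varepsilon|v|$ on each $K_i$ at scale $r_i(\varepsilon)$. Define $\Gamma_0$ to be the union of the family of curves $\gamma$ with $\int_\gamma \chi_N\, ds>0$, and the family of curves on which the upper gradient inequality for a chosen minimal $2$-weak upper gradient $g_h$ of $h$ fails on $\gamma$ or on some subcurve. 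The first family has zero modulus since $n\chi_N$ is admissible for its subfamily $\{\gamma : \int_\gamma\chi_N\,ds\geq 1/n\}$ and $\int (n\chi_N)^2\,d\mathcal H^2=0$; the second has zero modulus by the definition of $N^{1,2}$.

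For $\gamma\notin\Gamma_0$ parametrized by arclength, the upper gradient inequality forces $h\circ\gamma$ to be absolutely continuous, so $\ell(h\circ\gamma)=\int_a^b|(h\circ\gamma)'|(t)\,dt$, with the metric speed $|(h\circ\gamma)'|(t):=\lim_{s\to t} d(h(\gamma(s)),h(\gamma(t)))/|s-t|$ existing for a.e.\ $t$. Moreover, at almost every $t$ one has simultaneously: $\gamma(t)\in K_{i(t)}$ for some index $i(t)$; $t$ is a Lebesgue density-one point of $\gamma^{-1}(K_{i(t)})$; $\gamma$ is classically differentiable at $t$ with $|\dot\gamma(t)|=1$; and $t$ is a Lebesgue point of $g_h\circ\gamma$. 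Writing $N_t:=\ap\md h_{\gamma(t)}$ and fixing $\varepsilon>0$, Proposition \ref{prop:decomp-apmd} yields
$$\bigl|d(h(\gamma(s)),h(\gamma(t)))-N_t(\gamma(s)-\gamma(t))\bigr|\leq\varepsilon\,|\gamma(s)-\gamma(t)|$$
whenever $s$ is close enough to $t$ and $\gamma(s)\in K_{i(t)}$. Dividing by $|s-t|$ and using $\gamma(s)-\gamma(t)=(s-t)\dot\gamma(t)+o(|s-t|)$ together with the continuity of the seminorm $N_t$, one obtains $|(h\circ\gamma)'|(t)=N_t(\dot\gamma(t))$ along the density subset.

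The main obstacle is to upgrade this limit along good parameters $s$ to the full limit in the definition of the metric speed. I would handle this by interpolation: given a bad parameter $s$ with $\gamma(s)\notin K_{i(t)}$, the density-one property produces a good $s'$ with $|s-s'|=o(|s-t|)$, and the upper gradient inequality combined with the Lebesgue-point property of $g_h\circ\gamma$ gives
$$d(h(\gamma(s)),h(\gamma(s')))\leq\int_{s\wedge s'}^{s\vee s'}g_h(\gamma(\tau))\,d\tau=o(|s-t|).$$
Hence replacing $s$ by $s'$ in the quotient $d(h(\gamma(s)),h(\gamma(t)))/|s-t|$ introduces only a negligible error, and the limit along the density subset coincides with the full metric speed. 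Integrating the resulting pointwise identity $|(h\circ\gamma)'|(t)=\ap\md h_{\gamma(t)}(\dot\gamma(t))$ over $[a,b]$ yields the formula.
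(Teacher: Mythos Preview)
The paper does not supply its own proof of this lemma; it is quoted as a citation from \cite{LWintrinsic}*{Lemma 3.1}, so there is nothing in the paper to compare your argument against directly.

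That said, your argument is the standard one and is correct in outline: decompose $U$ via Proposition~\ref{prop:decomp-apmd}, discard the modulus-zero families of curves that meet the null set $N$ with positive length or on which the upper gradient inequality fails for some subcurve, deduce absolute continuity of $h\circ\gamma$, and then identify the metric speed $|(h\circ\gamma)'|(t)$ with $\ap\md h_{\gamma(t)}(\dot\gamma(t))$ at a.e.\ $t$ by the density/interpolation step you describe. One small point you should make explicit: also discard the (modulus-zero) family of curves along which $g_h$ is not integrable. Without this, $g_h\circ\gamma$ need not lie in $L^1([a,b])$, the Lebesgue-point hypothesis is unavailable, and your key interpolation estimate $\int_{s\wedge s'}^{s\vee s'} g_h(\gamma(\tau))\,d\tau = o(|s-t|)$ cannot be justified. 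With that adjustment the proof is complete.
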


\begin{lemma}\label{lemma:apmd_ug}
    If $h\in N^{1,2}(U,X)$ then the function $L\colon U\to[0,\infty]$ defined by $L(x)=\max\{\ap\md h_x(v):|v|=1\}$ is a representative of the minimal 2-weak upper gradient of $h$.
\end{lemma}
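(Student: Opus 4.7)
The plan is to establish both that $L$ is a $2$-weak upper gradient of $h$ (which gives $g_h \leq L$ a.e.\ by minimality) and that $L \leq g$ a.e.\ for every $2$-weak upper gradient $g$ of $h$ (which gives, in particular, $L \leq g_h$ a.e.). Together these yield $L = g_h$ a.e. Borel measurability of $L$ is routine: continuity of the seminorm $\ap\md h_x$ in $v$ gives $L(x) = \sup_{v \in D} \ap\md h_x(v)$ for any countable dense $D \subset S^1$, and each $x \mapsto \ap\md h_x(v)$ is measurable by Proposition \ref{prop:decomp-apmd}.

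For the first step, I would apply Lemma \ref{lemma:length-apmd} to obtain an exceptional family $\Gamma_0$ with $\mod\Gamma_0 = 0$. For any arclength-parametrized curve $\gamma\colon[a,b]\to U$ outside $\Gamma_0$, since $|\dot\gamma(t)| = 1$ a.e.,
$$\ell(h\circ\gamma) = \int_a^b \ap\md h_{\gamma(t)}(\dot\gamma(t))\,dt \leq \int_a^b L(\gamma(t))\,dt = \int_\gamma L\, ds,$$
which bounds $d(h(\gamma(a)), h(\gamma(b)))$. The same reasoning applies on every subcurve of $\gamma$, so $L$ is a $2$-weak upper gradient.

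For the second step, fix a $2$-weak upper gradient $g \in L^2_\loc(U)$ of $h$ and let $\Gamma_1$ be the associated exceptional family. By standard Fubini arguments applied to an $L^2$ superadmissible function for $\Gamma_1$ and to $g^2$, for almost every $x \in U$ and almost every direction $v \in S^1$, the line through $x$ in direction $v$ avoids $\Gamma_1$ on every sub-segment and $x$ is a one-dimensional Lebesgue point of $g$ along that line. By Proposition \ref{prop:decomp-apmd}, $h$ is approximately metrically differentiable at a.e.\ $x$; let $E_x$ denote the density-one set at $x$ realizing the approximate limit. A Fubini-type argument applied to the one-dimensional Lebesgue density theorem on lines shows that, for a.e.\ $x$ and a.e.\ $v$, $x$ is additionally a one-dimensional density point of $E_x$ along the line in direction $v$. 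Restricting attention to such $(x,v)$, choose $s_n \to 0^+$ with $x + s_n v \in E_x$. Then
$$s_n\,\ap\md h_x(v) + o(s_n) = d(h(x + s_n v), h(x)) \leq \int_0^{s_n} g(x + tv)\,dt,$$
and dividing by $s_n$ while sending $n \to \infty$ yields $\ap\md h_x(v) \leq g(x)$. This holds for a.e.\ $v$ at such $x$, and the Lipschitz continuity of the seminorm $\ap\md h_x$ in $v$ extends the inequality to all $v \in S^1$, giving $L(x) \leq g(x)$ for a.e.\ $x$.

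The main obstacle is the simultaneous coordination in the second step of three a.e.\ conditions along lines through $x$: avoidance of $\Gamma_1$, Lebesgue-point behavior of $g$, and one-dimensional density of $E_x$. The Fubini interchange between $x$ and $v$ is what produces, for a.e.\ $x$, a set of directions $v$ that is large enough in $S^1$ to permit the seminorm-continuity argument to upgrade the a.e.\ estimate to a pointwise one. Without the seminorm structure of $\ap\md h_x$, the final extension step would fail.
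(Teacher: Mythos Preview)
Your argument is correct, and the overall two-step strategy matches the paper's. The execution of the second step differs in two ways. First, the paper restricts to \emph{genuine} upper gradients $g\in L^2(U)$, so that the upper gradient inequality holds along \emph{every} segment and no exceptional family $\Gamma_1$ enters; the passage to the minimal $2$-weak upper gradient is then made at the end via $L^2$-approximation of $g_h$ by genuine upper gradients. Second, rather than the $x$-dependent density set $E_x$, the paper uses the compact sets $K_i$ of Proposition~\ref{prop:decomp-apmd}, on which the metric-differential approximation is uniform; then a single Fubini argument (for fixed $v$, a.e.\ $x\in K_i$ has $x+\delta v\in K_i$ for arbitrarily small $\delta>0$) replaces your three-way coordination. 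The paper's route is shorter precisely because it avoids tracking $\Gamma_1$ and the $x$-dependence of $E_x$; your route has the merit of working directly with weak upper gradients.

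One small caveat: your assertion that $x$ is a \emph{one-dimensional density point} of $E_x$ along a.e.\ direction $v$ is stronger than what two-dimensional density $1$ alone guarantees, and its justification via Fubini is unclear since $E_x$ varies with $x$. However, you only use the weaker consequence that $x+s_n v\in E_x$ for some sequence $s_n\to 0^+$, and this \emph{does} follow for a.e.\ $v$ directly from the fact that $E_x$ has planar density $1$ at $x$ (a sector argument in polar coordinates). So there is no actual gap, only a slight overstatement.
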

\begin{proof}
    It is an immediate consequence of Lemma \ref{lemma:length-apmd} that $L$ is a $2$-weak upper gradient of $h$.  It remains to show that if $g$ is an upper gradient of $h$ in $L^2(U)$, then $L(x)\leq g(x)$ for a.e.\ $x\in U$; this will imply that the same conclusion is true for the minimal $2$-weak upper gradient. Let $g\in L^2(U)$ be an upper gradient of $h$. It can be deduced from Fubini's theorem that for each $v\in \mathbb S^1$ and for a.e.\ $x\in U$ we have
    \begin{align}\label{eq:g(x)}
        g(x)=\lim_{\delta\to 0}\frac{1}{\delta}\int_0^{\delta}g(x+t v)\,dt = \lim_{\delta\to 0} \frac{1}{\delta}\int_{\gamma_v|_{[0,\delta]}} g\, ds,
    \end{align}
    where $\gamma_{v}\colon [0,1] \to \R^2$ is the curve $\gamma_{v}(t)=x+tv$. Consider a set $K_i$ as in Proposition \ref{prop:decomp-apmd}. An application of Fubini's theorem shows that for each $v\in \mathbb S^1$ and for a.e.\ $x\in K_i$ we have $x+\delta v\in K_i$ for arbitrarily small values of $\delta>0$. Let $\varepsilon>0$, $v\in \mathbb S^1$, and $x\in K_i$ such that \eqref{eq:g(x)} is true and $x+\delta_n v\in K_i$ for a sequence $\delta_n\to 0$. By Proposition \ref{prop:decomp-apmd}, whenever $|\delta_n v|\leq r_i(\varepsilon)$, we have
   \begin{align*}
       \ap\md h_x(v)&\leq\frac{1}{\delta_n}d(h(x),h(x+\delta_n v))+\varepsilon |v|  \leq\frac{1}{\delta_n}\int_{\gamma_{v}|_{[0,\delta_n]}}g+\varepsilon.
   \end{align*}
   We let $n\to \infty$ and then $\varepsilon\to 0$ to obtain $\ap\md h_x(v)\leq g(x)$. Since this is true for every $v\in \mathbb S^1$, we obtain $L(x)\leq g(x)$ for a.e.\ $x\in K_i$. The sets $K_i$, $i\in \N$, cover $U$ up to a set of measure zero, so the conclusion follows. 
\end{proof}

If $\ap\md h_x$ is a norm, let $B_x=\{y\in\R^2: \ap\md h_x(y)\leq1\}$ be the closed unit ball in $(\R^2, \ap\md h_x)$. The \emph{Jacobian} of $\ap\md h_x$ is defined to be $J(\ap\md h_x)={\pi}/{|B_x|}$, where $|B_x|$ is the Lebesgue measure of $B_x$. As a planar symmetric convex body, $B_x$ contains a unique ellipse of maximal area $E_x$, called the \emph{John ellipse} of $B_x$; see \cite{Bal97}*{Theorem 3.1}. When $\ap\md h_x$ is not a norm, the closed unit ball $B_x$ has infinite area and we define $J(\ap\md h_x)=0$. 

\begin{thm}[Area formula]\label{thm:area-formula}
    If $h\in N^{1,2}(U,Y)$, then there exists a set $G_0\subset U$ with $\mathcal H^2(G_0)=0$ such that for every measurable set $A\subset U\setminus G_0$ we have
    \begin{align*}
       \int_A J(\ap\md h_x) \,d\mathcal H^2=\int_Y N(h|_A,y)\,d\mathcal H^2.
    \end{align*}
\end{thm}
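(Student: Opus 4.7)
The strategy is to reduce the statement to Kirchheim's classical area formula for metric-valued Lipschitz maps on measurable subsets of $\R^2$, by decomposing $U$ (up to a null set) into countably many compact pieces on each of which $h$ restricts to a genuine Lipschitz map.

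First, apply Proposition \ref{prop:decomp-apmd} with $\varepsilon = 1$ to obtain pairwise disjoint compact sets $\{K_i\}_{i \in \N}$ covering $U$ up to a set $G_0$ of measure zero, such that $h$ is approximately metrically differentiable at every $x \in K_i$ and satisfies $|d(h(x), h(x+v)) - \ap\md h_x(v)| \leq |v|$ whenever $x, x+v \in K_i$ and $|v| \leq r_i(1)$. Refine this decomposition by further partitioning each $K_i$ into countably many Borel pieces (still denoted $K_i$) on which $\max\{\ap\md h_x(v) : |v|=1\} \leq M_i$ pointwise and whose diameter is less than $r_i(1)$. On each refined piece, any two $x, y \in K_i$ satisfy
$$d(h(x), h(y)) \leq \ap\md h_x(y - x) + |x - y| \leq (M_i + 1)|x - y|,$$
so $h|_{K_i}$ is $(M_i + 1)$-Lipschitz.

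Next, embed $Y$ isometrically into $\ell^{\infty}$ via the Kuratowski embedding and extend each $h|_{K_i}$ coordinatewise by McShane's theorem to a Lipschitz map $\tilde h_i \colon \R^2 \to \ell^{\infty}$. Kirchheim's area formula applied to $\tilde h_i$ gives, for every measurable $A \subset U$,
$$\int_{A \cap K_i} J(\md (\tilde h_i)_x)\, d\mathcal H^2 = \int_{\ell^{\infty}} N(\tilde h_i|_{A \cap K_i}, z)\, d\mathcal H^2.$$
At $\mathcal H^2$-almost every $x \in K_i$, namely at the Lebesgue density points of $K_i$, the metric derivative $\md (\tilde h_i)_x$ coincides with $\ap\md h_x$: for any unit vector $v$, one computes the metric derivative along sequences $t_n \to 0$ with $x + t_n v \in K_i$ (guaranteed in almost every direction by a Fubini-density argument), and Proposition \ref{prop:decomp-apmd} forces this limit to equal $\ap\md h_x(v)$. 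Hence $J(\md (\tilde h_i)_x) = J(\ap\md h_x)$ a.e.\ on $K_i$, and since $\tilde h_i|_{K_i} = h|_{K_i}$ takes values in $Y$, the right-hand side equals $\int_Y N(h|_{A \cap K_i}, y)\, d\mathcal H^2$.

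Finally, summing over $i$: for any measurable $A \subset U \setminus G_0$, the left-hand sides add to $\int_A J(\ap\md h_x)\, d\mathcal H^2$ since the $K_i$ partition $A$. On the right, $N(h|_A, y) = \sum_i N(h|_{A \cap K_i}, y)$ by disjointness of the $K_i$, and monotone convergence yields the desired identity. The principal technical obstacle is the identification $\md (\tilde h_i)_x = \ap\md h_x$ a.e.\ on $K_i$: the approximate limit defining $\ap\md h_x$ is a condition on a set of density one in $\R^2$, whereas $K_i$ itself generally does not have density one at $x$, so the uniform estimate from Proposition \ref{prop:decomp-apmd} is essential to bridge the gap between the two notions of derivative.
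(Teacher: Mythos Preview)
Your approach is correct and follows the same overall strategy as the paper: decompose $U$ into countably many pieces on which $h$ is Lipschitz, up to a null set, and then invoke an area formula for metric-valued Lipschitz maps. The paper's proof, however, is considerably shorter. It cites \cite{HKST:15}*{Theorem 8.1.49} directly for the Lipschitz decomposition (rather than deriving it from Proposition \ref{prop:decomp-apmd} as you do), observes that this forces Lusin's condition (N) off a null set $G_0$, and then appeals to Karmanova's area formula \cite{Kar07}*{Theorem 3.2}, which is already stated for approximately metrically differentiable maps satisfying (N). That black-box absorbs both your Kirchheim step and your identification $\md(\tilde h_i)_x = \ap\md h_x$; in particular the paper never needs to extend to $\ell^\infty$ or sum over pieces explicitly.

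Your route through Kirchheim is more self-contained and relies on a more classical reference, at the cost of the extra identification argument. One small imprecision: the equality $\md(\tilde h_i)_x = \ap\md h_x$ holds at a.e.\ $x\in K_i$ (via the Fubini argument you sketch, combined with the conclusion of Proposition \ref{prop:decomp-apmd} for \emph{every} $\varepsilon>0$, not just $\varepsilon=1$), rather than specifically at Lebesgue density points---two-dimensional density at $x$ does not by itself yield one-dimensional density along a.e.\ ray. This is a phrasing issue rather than a genuine gap.
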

\begin{proof}
    It is a consequence of \cite{HKST:15}*{Theorem 8.1.49} that $U$ can be covered up to a set of measure zero by countably many disjoint measurable sets $G_j$, $j\in \N$, such that $h|_{G_j}$ is Lipschitz. This implies that outside a set of measure zero $G_0\subset U$, $h$ satisfies Lusin's condition (N). The statement now follows from \cite{Kar07}*{Theorem 3.2}.
\end{proof}

\begin{lemma}\label{lemma:wqc_derivative}
Let $Y$ be a metric surface of locally finite Hausdorff $2$-measure and $h\colon U\to Y$ be a weakly $K$-quasiconformal map for some $K>0$. Then
 $$J(\ap\md h_x)\leq  \max\{(\ap\md h_x(v))^2:|v|=1\} \leq KJ(\ap\md h_x)$$
 for a.e.\ $x\in U$. In particular, for a.e.\ $x\in U$ we have  $J(\ap\md h_x)=0$ if and only if $\ap \md h_x\equiv 0$.
\end{lemma}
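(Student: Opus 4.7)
The plan is to write $L(x):=\max\{\ap\md h_x(v):|v|=1\}$ for the operator norm of the seminorm $\ap\md h_x$ and then to establish the two inequalities separately. The lower bound $J(\ap\md h_x)\leq L(x)^2$ is purely convex-geometric and does not invoke weak quasiconformality. If $\ap\md h_x$ fails to be a norm (in particular if $L(x)=0$), then $J(\ap\md h_x)=0$ by convention and the bound is trivial. Otherwise $L(x)>0$ and the inequality $\ap\md h_x(v)\leq L(x)|v|$ shows that $B_x$ contains the Euclidean disk of radius $1/L(x)$, so $|B_x|\geq\pi/L(x)^2$ and hence $J(\ap\md h_x)=\pi/|B_x|\leq L(x)^2$.

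For the substantive bound $L(x)^2\leq KJ(\ap\md h_x)$ I would combine three earlier results. By Theorem \ref{theorem:definitions_qc}, $h$ lies in $N^{1,2}_{\loc}(U,Y)$, and then Lemma \ref{lemma:apmd_ug} identifies $L$ as a representative of the minimal $2$-weak upper gradient $g_h$. Theorem \ref{theorem:radon-nikodym}\,(1) provides the analytic inequality $g_h(x)^2\leq KJ_h(x)$ for a.e.\ $x\in U$, where $J_h$ is the Radon--Nikodym derivative of $\nu(E)=\mathcal H^2(h(E))$ with respect to $\mathcal H^2$. It then remains to match $J_h$ with $J(\ap\md h_x)$ a.e. For this, the key step is to apply the area formula (Theorem \ref{thm:area-formula}) to each measurable $A$ contained in the full-measure set $U\setminus G_0$ supplied there, yielding
$$\int_A J(\ap\md h_x)\,d\mathcal H^2=\int_Y N(h|_A,y)\,d\mathcal H^2.$$
By the a.e.\ injectivity given by Theorem \ref{theorem:radon-nikodym}\,(2), $N(h|_A,y)\in\{0,1\}$ for a.e.\ $y\in Y$ and equals $1$ precisely on $h(A)$ up to a null set, so the right-hand side collapses to $\mathcal H^2(h(A))=\nu(A)$. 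Lebesgue differentiation of measures then gives $J(\ap\md h_x)=J_h(x)$ for a.e.\ $x\in U$, and the upper inequality follows by combining the three ingredients.

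The final assertion is a direct corollary of the two inequalities: if $\ap\md h_x\equiv 0$ then $L(x)=0$ and the lower bound forces $J(\ap\md h_x)\leq 0$; conversely $J(\ap\md h_x)=0$ forces $L(x)^2\leq 0$ by the upper bound, so $L(x)=0$ and the seminorm vanishes. I expect the main technical point to be the identification $J_h=J(\ap\md h_x)$ a.e.; this step genuinely requires the a.e.\ injectivity of Theorem \ref{theorem:radon-nikodym}\,(2), since without it the area formula would only yield a one-sided inequality between the two Jacobians.
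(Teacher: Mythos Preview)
Your proposal is correct and follows essentially the same route as the paper's proof: identify $L$ with $g_h$ via Lemma~\ref{lemma:apmd_ug}, identify $J(\ap\md h_x)$ with $J_h$ via the area formula together with the a.e.\ injectivity from Theorem~\ref{theorem:radon-nikodym}\,(2), and then read off the upper bound from $g_h^2\leq KJ_h$; the lower bound is the same convex-geometric observation about the Euclidean disk of radius $1/L(x)$ inside $B_x$. Your write-up is in fact slightly more careful than the paper's in handling the degenerate case where $\ap\md h_x$ is only a seminorm, and in spelling out the final ``if and only if'' clause.
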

\begin{proof}
    By Theorem \ref{theorem:definitions_qc}, $h\in N^{1,2}_{\loc}(U,Y)$, so $h$ is approximately metrically differentiable at a.e.\ $x\in U$. We set $N_x=\ap\md h_x$ and $J_x=J(\ap\md h_x)$ for a.e.\ $x\in U$.  By Lemma \ref{lemma:apmd_ug}, the quantity $L_x=\max\{N_x(v):|v|=1\}$ is a representative of the minimal $2$-weak upper gradient of $h$, so $L_x=g_h(x)$ for a.e.\ $x\in U$. By the area formula of Theorem \ref{thm:area-formula}, there exists a set $G_0\subset U$ of measure zero such that for each measurable set $A\subset U\setminus G_0$ we have
    $$\int_A J_x= \int_{Y} N(h|_A,y)\, d\mathcal H^2= \mathcal H^2(h(A)),$$
    where the latter equality follows from Theorem \ref{theorem:radon-nikodym}. This implies that $J_x$ is the Radon--Nikodym derivative of the measure $A\mapsto \mathcal H^2(h(A))$, so $J_x=J_h(x)$ for a.e.\ $x\in U$, again by Theorem \ref{theorem:radon-nikodym}.  Finally, since $g_h(x)^2\leq KJ_h(x)$, we conclude that $L_x^2\leq KJ_x$ for a.e.\ $x\in U$. The inequality $J_x\leq L_x^2$ follows by the fact that the unit ball $B_x=\{y\in \R^2: N_x(y)\leq 1\}$ contains a Euclidean ball of radius $1/L_x$. 
\end{proof}

\begin{remark}
It is a consequence of Lemma \ref{lemma:wqc_derivative} that if $f$ is a weakly $K$-quasiconformal map from a planar (or Riemannian) domain $U$ onto a metric surface $Y$, then we necessarily have $K\geq 1$. It is unclear how to show this for maps between arbitrary metric surfaces. 
\end{remark}

\section{Proof of main theorem}This section is devoted to the proof of Theorem \ref{thm:main}. Throughout the section we assume that $X,Y$ are metric surfaces without boundary and with locally finite Hausdorff $2$-measure. 

\subsection{Preservation of length}
In this section we establish Theorem \ref{thm:main} \ref{main:length}. 

\begin{lemma}\label{lemma:weak_qc}
    Let $f\colon X\to Y$ be a map that is area-preserving and $L$-Lipschitz for some $L>0$. Then $\mod \Gamma\leq L^2\mod f(\Gamma)$ for each curve family $\Gamma$ in $X$. 
\end{lemma}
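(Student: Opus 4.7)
The plan is to pull back an admissible function for $f(\Gamma)$ to an admissible function for $\Gamma$ via composition with $f$, using Lipschitz continuity to control path integrals and the area-preserving hypothesis to control the $L^2$ norm.

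Concretely, let $\rho \colon Y\to[0,\infty]$ be admissible for $f(\Gamma)$ and set $\tilde\rho = L\cdot(\rho\circ f)$ on $X$. To verify admissibility of $\tilde\rho$ for $\Gamma$, I would take a rectifiable $\gamma\in\Gamma$ parametrized by arclength; since $f$ is $L$-Lipschitz the curve $f\circ\gamma$ is also rectifiable and $|(f\circ\gamma)'(t)|\le L$ for a.e.\ $t$. A standard change-of-variables / lower-semicontinuity argument (bounding the path integral over $f\circ\gamma$ by the integral over $\gamma$ scaled by the metric speed of $f\circ\gamma$) yields
$$\int_{f\circ\gamma}\rho\,ds\;\le\;L\int_\gamma (\rho\circ f)\,ds\;=\;\int_\gamma\tilde\rho\,ds,$$
and the left-hand side is $\geq 1$ by admissibility of $\rho$, giving $\int_\gamma\tilde\rho\,ds\geq 1$.

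Next I would compare $L^2$ norms. The key observation is that the area-preserving hypothesis forces $\mathcal H^2(f^{-1}(E))\leq \mathcal H^2(E)$ for every Borel set $E\subset Y$: since $f$ is continuous, $A:=f^{-1}(E)$ is Borel, and $f(A)\subseteq E$ combined with $\mathcal H^2(A)=\mathcal H^2(f(A))$ yields the inequality. By approximation with simple functions this upgrades to
$$\int_X (\phi\circ f)\,d\mathcal H^2\;\le\;\int_Y\phi\,d\mathcal H^2$$
for every Borel $\phi\colon Y\to[0,\infty]$. Applying this with $\phi=\rho^2$ gives
$$\int_X\tilde\rho^{\,2}\,d\mathcal H^2\;=\;L^2\!\int_X(\rho\circ f)^2\,d\mathcal H^2\;\le\;L^2\!\int_Y\rho^2\,d\mathcal H^2.$$
Since $\tilde\rho$ is admissible for $\Gamma$, this bounds $\mod\Gamma$ by $L^2\int_Y\rho^2\,d\mathcal H^2$; taking the infimum over admissible $\rho$ gives $\mod\Gamma\leq L^2\mod f(\Gamma)$.

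The argument is essentially routine; the only subtle point is the passage from the set-theoretic area-preserving condition to the pointwise pushforward-type inequality $\int_X\phi\circ f\,d\mathcal H^2\le\int_Y\phi\,d\mathcal H^2$. I expect this to be the main thing to state carefully, but it is immediate once one notes $f(f^{-1}(E))\subseteq E$ and uses $\mathcal H^2(f^{-1}(E))=\mathcal H^2(f(f^{-1}(E)))$.
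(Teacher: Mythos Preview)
Your proof is correct. The key inequality $\mathcal H^2(f^{-1}(E))\le \mathcal H^2(E)$, obtained from $f(f^{-1}(E))\subset E$ and the area-preserving hypothesis, is exactly the observation the paper uses as well.

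The packaging differs, however. The paper does not manipulate admissible functions directly; instead it notes that the constant $L$ is an upper gradient of $f$, verifies the integral bound $\int_{f^{-1}(E)}L^2\,d\mathcal H^2\le L^2\mathcal H^2(E)$, and then invokes Williams' equivalence theorem (Theorem~\ref{theorem:definitions_qc}) to pass from this analytic condition to the modulus inequality. Your argument is the direct modulus computation that underlies the relevant implication of Williams' theorem in this special case: pull back an admissible $\rho$ to $L\cdot(\rho\circ f)$, use the Lipschitz bound on metric speed for the path-integral inequality, and the pushforward bound for the $L^2$ estimate. Your route is more elementary and self-contained; the paper's is shorter but leans on a black-box equivalence. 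Both rest on the same two ingredients.
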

\begin{proof}
    Since $f$ is $L$-Lipschitz, the constant function $L$ is an upper gradient of $f$. Moreover, for every Borel set $A\subset Y$ we have
    $$\int_{f^{-1}(A)} L^2 \, d\mathcal H^2= L^2 \mathcal H^2(f^{-1}(A))= L^2 \mathcal H^2( f(f^{-1}(A))) \leq L^2\mathcal H^2(A).$$
    The conclusion now follows from Theorem \ref{theorem:definitions_qc}. 
\end{proof}

\begin{lemma}\label{lemma:multiplicity}
Let $f\colon X\to Y$ be a map that is area-preserving and continuous. Then $N(f,y)=1$ for a.e.\ $y\in f(X)$. 
\end{lemma}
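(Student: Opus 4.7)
The strategy I would pursue is to first show that $f$ sends disjoint measurable sets to sets that meet in a null set, and then to deduce essential injectivity by a countable-basis argument. Concretely, the key claim to prove is: if $A_1, A_2 \subset X$ are disjoint measurable sets, then $\mathcal{H}^2(f(A_1) \cap f(A_2)) = 0$.

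For the claim, localize using local finiteness of $\mathcal{H}^2$ so that we may assume $\mathcal{H}^2(A_i) < \infty$. Set $B = f(A_1) \cap f(A_2)$ and $A_i' = A_i \cap f^{-1}(B)$ for $i=1,2$, so that $A_1', A_2'$ remain disjoint, and a direct set-theoretic check gives $f(A_i') = B$. Applying the area-preservation hypothesis three times yields
$$2\mathcal{H}^2(B) = \mathcal{H}^2(A_1') + \mathcal{H}^2(A_2') = \mathcal{H}^2(A_1' \cup A_2') = \mathcal{H}^2(f(A_1' \cup A_2')) = \mathcal{H}^2(B),$$
so $\mathcal{H}^2(B) = 0$. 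The cancellation $2\mathcal{H}^2(B) = \mathcal{H}^2(B)$ exploits area-preservation on each $A_i'$ and on their union simultaneously, and is the heart of the argument.

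With the claim in hand, let $\{U_k\}$ be a countable basis for the topology of $X$, which exists since $X$ is second countable. If $y \in f(X)$ satisfies $N(f,y) \geq 2$, then there exist distinct $x_1, x_2 \in f^{-1}(y)$ and, since $X$ is a metric space, basis elements $U_i \ni x_1$ and $U_j \ni x_2$ with $U_i \cap U_j = \emptyset$; thus $y \in f(U_i) \cap f(U_j)$. Hence
$$\{y \in f(X) : N(f,y) \geq 2\} \subset \bigcup_{U_i \cap U_j = \emptyset} f(U_i) \cap f(U_j),$$
a countable union of sets of $\mathcal{H}^2$-measure zero by the claim, which gives the conclusion.

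The main technical point will be measurability: since $U_i$ is Borel and $f$ is continuous, $f(U_i)$ is an analytic set, hence measurable with respect to the completion of $\mathcal{H}^2$; likewise $f^{-1}(B)$ is measurable when $B$ is analytic. This is enough to make all the preceding set operations and measure computations rigorous, and is the only place one must be careful, since the bare statement that $f(A)$ be Borel for Borel $A$ need not hold.
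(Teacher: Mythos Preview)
Your argument is correct and takes a genuinely different route from the paper's proof.

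The paper proceeds via Federer's Theorem~2.10.10: since the set function $\zeta(S)\coloneqq\mathcal H^2(f(S))$ coincides with $\mathcal H^2(S)$ by area-preservation, the measure obtained from $\zeta$ by Carath\'eodory's construction is $\mathcal H^2$ itself, and Federer's theorem yields the integrated multiplicity formula
\[
\mathcal H^2(A)=\int_Y N(f|_A,y)\,d\mathcal H^2
\]
for every Borel set $A$. Comparing this with $\mathcal H^2(f(A))=\mathcal H^2(A)$ on sets of finite measure forces $N(f|_A,y)=1$ a.e.\ on $f(A)$, and $\sigma$-finiteness finishes the proof.

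Your approach avoids Federer and Carath\'eodory entirely. The ``$2\mathcal H^2(B)=\mathcal H^2(B)$'' computation is a clean, self-contained way to see that disjoint sets map to essentially disjoint images, and the countable-basis step then covers $\{N(f,\cdot)\ge 2\}$ by a countable union of null sets. The advantage of your route is that it is more elementary and makes transparent that only the bare identity $\mathcal H^2(A)=\mathcal H^2(f(A))$ is used, with no ambient geometric or measure-theoretic machinery. The paper's route, in exchange for citing a nontrivial result, produces the multiplicity formula above as a byproduct, which can be handy elsewhere. Both arguments require the same observation that continuous images of Borel sets are analytic and hence $\mathcal H^2$-measurable; the paper makes this explicit as well. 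One small point worth adding to your write-up: the existence of a countable basis for $X$ uses that a metric manifold (with the standing assumptions of the paper) is second countable.
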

\begin{proof}
For each Borel set $A\subset X$ the set $f(A)$ is analytic \cite{Kechris}*{Proposition 14.4} and thus $\mathcal H^2$-measurable \cite{Kechris}*{Theorem 29.7}. Define $\zeta(S)= \mathcal H^2(f(S))$, where $S\subset X$ is a Borel set. By assumption, $\zeta(S)=\mathcal H^2(S)$. The measure on $X$ resulting by Carath\'eodory's construction from $\zeta$ is precisely $\mathcal H^2$. By \cite{Fed69}*{Theorem 2.10.10}, for each Borel set $A\subset X$ we have
$$ \mathcal H^2(A)=\int_Y N(f|_A , y) \, d\mathcal H^2.$$
In particular, since $f$ is area-preserving we have
\begin{align*}
    \mathcal H^2 (A) =\int_{f(A)} N(f|_A , y)\, d\mathcal H^2\geq \mathcal H^2(f(A)) =\mathcal H^2(A).
\end{align*}
If $\mathcal H^2(A)<\infty$, we conclude that $N(f|_A,y)=1$ for a.e.\ $y\in f(A)$. Since $X$ has $\sigma$-finite Hausdorff $2$-measure,  we have $N(f,y)=1$ for a.e.\ $y\in f(X)$.
\end{proof}

\begin{lemma}\label{lemma:length}
Let $U\subset \R^2$ be a domain and $\phi\colon U\to X$ be a weakly quasiconformal map. Let $f\colon X\to Y$ be a map that is area-preserving and $L$-Lipschitz for some $L>0$. Then there exists a constant $C(L)>0$ such that 
\begin{align*}
    C(L)\ell(\phi\circ \beta)\leq \ell(f\circ \phi\circ \beta)\leq L\ell(\phi\circ \beta)
\end{align*}
for all curves $\beta$ in $U$ outside a curve family $\Gamma_0$ with $\mod \Gamma_0=0$. Moreover, if $L=1$, then we can choose $C(1)=1$.
\end{lemma}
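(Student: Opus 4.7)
\emph{Proof plan.} The plan is to dispose of the upper bound immediately and then extract the lower bound through a careful analysis of approximate metric derivatives. The upper bound $\ell(f\circ\phi\circ\beta)\leq L\ell(\phi\circ\beta)$ holds for every curve $\beta$ and is an immediate consequence of the $L$-Lipschitz property of $f$. For the lower bound I will work with the composition $h:=f\circ\phi\colon U\to Y$. Since $f$ is Lipschitz, $Lg_\phi$ is a $2$-weak upper gradient of $h$, so $h\in N^{1,2}_{\loc}(U,Y)$ and $g_h\leq Lg_\phi$ almost everywhere.

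The crux of the argument will be to establish the identity $J(\ap\md h_x)=J(\ap\md\phi_x)$ for a.e.\ $x\in U$. For this I will combine Theorem \ref{theorem:radon-nikodym}(2), which yields $N(\phi,z)=1$ for $\mathcal H^2$-a.e.\ $z\in X$, with Lemma \ref{lemma:multiplicity}, which yields $N(f,y)=1$ for $\mathcal H^2$-a.e.\ $y\in f(X)$. Because $f$ is area-preserving, it pushes the exceptional $\mathcal H^2$-null set in $X$ forward to an $\mathcal H^2$-null set in $Y$, and combining the two multiplicity facts then gives $N(h,y)=1$ for a.e.\ $y\in h(U)$. Applying the area formula (Theorem \ref{thm:area-formula}) to both $\phi$ and $h$, together with the identity $\mathcal H^2(h(A))=\mathcal H^2(\phi(A))$ coming from area-preservation of $f$, produces the Jacobian identity.

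Next I will upgrade this identity to a pointwise seminorm comparison. Let $K$ denote a constant in the weak quasiconformality of $\phi$. Lemma \ref{lemma:apmd_ug} and Lemma \ref{lemma:wqc_derivative} applied to $\phi$, combined with $g_h\leq Lg_\phi$ and the Jacobian identity, give $\max_{|v|=1}\ap\md h_x(v)^2\leq KL^2\,J(\ap\md h_x)$ a.e. An elementary estimate for a symmetric convex planar body $B$ with area $A$, inradius $r$, and circumradius $R$, namely $A\geq 2rR$ (via the inscribed rhombus with vertices at the extremal directions), when applied to the closed unit ball of $\ap\md h_x$, produces the lower bound $\ap\md h_x(v)\geq (2/(\pi L))\sqrt{J(\ap\md h_x)/K}\,|v|$. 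Combined with the upper bound $\ap\md\phi_x(v)\leq\sqrt{KJ(\ap\md\phi_x)}\,|v|$ from weak quasiconformality of $\phi$, this yields $\ap\md h_x(v)\geq (2/(\pi KL))\,\ap\md\phi_x(v)$ for a.e.\ $x\in U$ and every $v\in\R^2$. Integrating along curves via Lemma \ref{lemma:length-apmd}, applied to both $\phi$ and $h$ and taking the union of the two resulting modulus-zero exceptional families, will then give the length inequality for $\beta$ outside a curve family of $2$-modulus zero.

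For the equality case $L=1$, the inequality $\ap\md h_x\leq\ap\md\phi_x$ together with the Jacobian identity forces the closed unit ball of $\ap\md h_x$ to contain that of $\ap\md\phi_x$ while having the same area; being symmetric closed convex bodies, the two must coincide, so $\ap\md h_x=\ap\md\phi_x$ a.e. Integration then gives $\ell(h\circ\beta)=\ell(\phi\circ\beta)$ outside a modulus-zero family of curves, i.e.\ $C(1)=1$. The main technical obstacle I foresee is the multiplicity identity $N(h,\cdot)=1$ a.e., since it requires carefully combining the $\mathcal H^2$-a.e.\ injectivity of $\phi$ with that of $f$ through the area-preserving condition; once this is in place, the rest of the argument is a quantitative estimate on symmetric convex bodies in the plane.
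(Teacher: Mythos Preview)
Your proposal follows the same architecture as the paper's proof: establish the Jacobian identity $J(\ap\md h_x)=J(\ap\md\phi_x)$ a.e.\ via the multiplicity statements (Theorem~\ref{theorem:radon-nikodym} and Lemma~\ref{lemma:multiplicity}) and the area formula, convert this into a pointwise comparison of the seminorms $\ap\md h_x$ and $\ap\md\phi_x$, and integrate along curves using Lemma~\ref{lemma:length-apmd}. The genuine difference is in the convex-geometry step. The paper first proves the pointwise inequality $\widetilde N_x\le L\,N_x$ (hence $B_x\subset L\widetilde B_x$) via the decomposition of Proposition~\ref{prop:decomp-apmd}, and then combines $|B_x|=|\widetilde B_x|$ with John's ellipse theorem to obtain $\widetilde B_x\subset 4LB_x$, giving $C(L)=(4L)^{-1}$ \emph{independent of the weak-quasiconformality constant $K$ of $\phi$}. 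Your inradius--circumradius estimate $|B|\ge 2rR$ is more elementary---it avoids John's theorem altogether---but it routes through $g_h\le Lg_\phi$ and Lemma~\ref{lemma:wqc_derivative}, producing $C=2/(\pi KL)$, which depends on $K$. This is harmless for the application to Theorem~\ref{thm:main}~\ref{main:length} (where the $\phi_n$ are uniformly $2$-quasiconformal by Rajala's theorem), though it does not literally match the notation $C(L)$ in the lemma.

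One small point to tighten: in your $L=1$ argument you invoke the full pointwise inequality $\ap\md h_x\le \ap\md\phi_x$, but this does not follow from $g_h\le g_\phi$ alone, which only compares the \emph{maxima} of the two seminorms over the unit sphere. You need the direction-by-direction inequality to conclude $B_{\phi,x}\subset B_{h,x}$ and hence equality of the unit balls. The paper obtains this from the $L$-Lipschitz condition on $f$ via the uniform decomposition of Proposition~\ref{prop:decomp-apmd}; you should invoke the same device.
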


\begin{proof}
    By Lemma \ref{lemma:weak_qc} and the weak quasiconformality of $\phi$, there exists a constant $K \geq 1$ such that for each curve family $\Gamma$ in $U$ we have
    $$\mod \Gamma\leq K\mod f(\phi(\Gamma)).$$
    By Theorem \ref{theorem:definitions_qc}, $f\circ \phi\in N_{\loc}^{1,2}(U,Y)$ and $\phi\in N_{\loc}^{1,2}(U ,X)$. In particular, both maps are approximately metrically differentiable almost everywhere. 

    Set $N_x=\ap\md \phi_x$ and $\widetilde N_x=\ap\md (f\circ\phi)_x$ for a.e.\ $x\in U$. We use the notation $B_x$, $\widetilde B_x$ for the corresponding unit balls, and $J_x$, $\widetilde J_x$ for the corresponding Jacobians. By Lemma \ref{lemma:length-apmd} we have
    \begin{align}\label{length:gamma}
        \ell(\phi\circ\beta)=\int_a^bN_{\beta(t)}(\dot\beta(t))\,dt
    \end{align}
    for every curve $\beta\colon [a,b]\to U$ parametrized by arclength outside a family $\Gamma_1$ with $\mod\Gamma_1=0$. Analogously, we get 
    \begin{align}\label{length:fgamma}
        \ell((f\circ\phi)\circ\beta)=\int_a^b\widetilde{N}_{\beta(t)}(\dot\beta(t))\,dt
    \end{align}
    for every curve $\beta\colon [a,b]\to U$ parametrized by arclength outside a family $\Gamma_2$ with $\mod\Gamma_2=0$. 
    
    Next, we claim that for a.e.\ $x\in U$ and all\ $v\in\R^2$ we have, 
    $$C(L)N_x(v)\leq\widetilde{N}_x(v)\leq LN_x(v)$$
    for some constant $C(L)>0$ with $C(1)=1$. This implies that there exists a curve family $\Gamma_3$ in $U$ with $\mod \Gamma_3=0$ such that for all curves $\beta\colon [a,b]\to U$ parametrized by arclength that are outside $\Gamma_3$ we have
    \begin{align}\label{length:ineq}
        C(L)\int_a^b{N}_{\beta(t)}(\dot\beta(t))\,dt \leq \int_a^b\widetilde{N}_{\beta(t)}(\dot\beta(t))\,dt\leq L \int_a^b{N}_{\beta(t)}(\dot\beta(t))\,dt.
    \end{align}
    Let $\Gamma_0$ be the family of curves that have a reparametrization in $\Gamma_1\cup \Gamma_2\cup \Gamma_3$. Then $\mod \Gamma_0=0$. By combining \eqref{length:gamma}, \eqref{length:fgamma}, and \eqref{length:ineq}, we see that the conclusions of the lemma are true for the family $\Gamma_0$. 
    
    Now, we prove the claim. Theorem \ref{thm:area-formula} applied to $\phi$ provides a set of measure zero $G_1\subset U$ such that for any measurable set $A\subset U\setminus G_1$ we have 
    \begin{align}\label{lemma:length:jx}
       \int_AJ_x&= \int_X N(\phi|_A,x)\, d\mathcal H^2=\hm(\phi(A)),
    \end{align}
    where the last equality follows from Theorem \ref{theorem:radon-nikodym}.
    Similarly,
    there exists a set $G_2\subset U$ of measure zero such that for any measurable set $A\subset U\setminus G_2$,
    \begin{align*}
        \int_A\widetilde{J}_x &= \int_Y \widetilde N(f\circ \phi|_A ,y)\, d\mathcal H^2.
    \end{align*}
    From Lemma \ref{lemma:multiplicity} we know that $N(f,y)=1$ for a.e.\ $y\in f(X)$. By Theorem \ref{theorem:radon-nikodym}, for a.e.\ $x\in X$, $\phi^{-1}(x)$ is a singleton. Since $f$ is area-preserving and in particular has the Lusin (N) property, we conclude that for a.e.\ $y\in f(X)$ the set $\phi^{-1}(x)$ is a singleton whenever $f(x)=y$. In summary, $N(f\circ\phi|_A,y)=1$ for a.e.\ $y\in f(\phi(A))$. In particular, for any measurable set $A\subset U\setminus G_2$,
    \begin{align*}
        \int_A\widetilde{J}_x = \int_Y \widetilde N(f\circ \phi|_A ,y)\, d\mathcal H^2= \hm(f(\phi(A))).
    \end{align*}
    The area-preserving property of $f$ and \eqref{lemma:length:jx} now imply that $J_x=\widetilde{J}_x$ for a.e.\ $x\in U$ and hence
    \begin{align}\label{eq:ball}
        |B_x|=|\widetilde{B}_x|
    \end{align}
    for a.e.\ $x\in U$. This  equality implies that $N_x$ is not a norm if and only if $\widetilde N_x$ is also not a norm. By Lemma \ref{lemma:wqc_derivative}, if $N_x$ is not a norm, then $N_x\equiv 0$. 
    
    Let $K_i,\widetilde K_j\subset U$, $i,j\in\N$, be the sets from Proposition \ref{prop:decomp-apmd} applied to $\phi,f\circ\phi$, respectively. Let $\varepsilon>0$. The Lipschitz property of $f$ implies that
    $$\widetilde{N}_x(v)\leq L N_x(v)+(1+L)\varepsilon|v|$$
    for every $x\in K_{i,j}=K_i\cap\widetilde{K}_j$ and every $v\in\R^2$ with $|v|\leq\min\{r_i(\varepsilon),\widetilde{r}_j(\varepsilon)\}$ and $x+v\in K_{i,j}$. This shows that 
    \begin{align}\label{ineq:Lapmd}
        \widetilde{N}_x(v)\leq L N_x(v) \quad \textrm{and thus} \quad B_x\subset L\widetilde B_x
    \end{align}
    for a.e.\ $x\in U$ and all $v\in\R^2$. Here $L\widetilde B_x$ denotes the closed ball $\{y\in \R^2: \widetilde N_x(y)\leq L\}$. In particular, if $N_x$ is not a norm, then $\widetilde N_x\equiv N_x\equiv 0$.
    
    If $L=1$, then \eqref{ineq:Lapmd} implies that $B_x\subset \widetilde B_x$ for a.e.\ $x\in U$. By \eqref{eq:ball}, we have $B_x=\widetilde B_x$ for a.e.\ $x\in U$, since $N_x$ and $\widetilde N_x$ are either both norms or vanish identically. Hence, $N_x(v)=\widetilde{N}_x(v)$ for a.e.\ $x\in U$ and all $v\in\R^2$. 

    Denote by $E_x,\widetilde{E}_x$ the John ellipse of $B_x,\widetilde{B}_x$, respectively, whenever $N_x$ and $\widetilde N_x$ are norms. John's Theorem (see \cite{Bal97}*{Theorem 3.1}) implies that 
    \begin{align}\label{eq:ellipse}
        E_x\subset B_x\subset\sqrt{2}E_x\quad\text{and}\quad\widetilde{E}_x\subset\widetilde{B}_x\subset\sqrt{2}\widetilde{E}_x.
    \end{align}
    Denote by $a_x,\widetilde{a}_x$ (resp.\ $b_x,\widetilde b_x$) the length of the major (resp.\ minor) axis of $E_x,\widetilde{E}_x$, respectively. By \eqref{ineq:Lapmd} and \eqref{eq:ellipse} we have that
    $$L^{-1}E_x\subset L^{-1}B_x\subset\widetilde{B}_x\subset\sqrt{2}\widetilde{E}_x,$$
    which implies that $b_x\leq\sqrt{2}L\widetilde{b}_x$. Moreover, combining \eqref{eq:ball} and \eqref{eq:ellipse} gives
    $$|\widetilde{E}_x|\leq|\widetilde{B}_x|=|B_x|\leq2|E_x|.$$ 
    Since $|E_x|=\pi a_x b_x$ and $|\widetilde{E}_x|=\pi\widetilde{a}_x\widetilde{b}_x$, we get
    $$\widetilde{a}_x\leq2\,\frac{a_x b_x}{\widetilde{b}_x}\leq2\sqrt{2}La_x.$$
    In particular, if we assume in addition that $E_x$ is a geometric ball, then $\widetilde{E}_x\subset 2\sqrt{2}LE_x$. All in all we obtain that 
    \begin{align}\label{lemma:length:conclusion}
            L^{-1}B_x\subset\widetilde{B}_x\subset\sqrt{2}\widetilde{E}_x\subset4LE_x\subset4LB_x,
    \end{align}
    with the additional assumption that $E_x$ is a geometric ball. Note that \eqref{lemma:length:conclusion} shows that the claim holds for $C(L)=(4L)^{-1}$. 

    For the general case that $E_x$ is not a geometric ball, we consider a linear map $T\colon \R^2\to \R^2$ such that $T(E_x)$ is a round ball. Note that \eqref{eq:ball} remains true for the images of $B_x,\widetilde{B}_x$ under $T$. Since the John ellipse is preserved under linear maps, the above calculations are true for the images of the corresponding sets under $T$, and thus one obtains the inclusions \eqref{lemma:length:conclusion} for the images. Therefore, the inclusions also hold for the original sets.
    \end{proof}

\begin{proof}[Proof of Theorem \ref{thm:main} \ref{main:length}]
    We cover $X$ with a countable collection of open sets $\{X_n\}_{n\in \N}$, each homeomorphic to $\R^2$. Every $X_n$ is reciprocal and, by Rajala's theorem \cite{Raj:17}, there exists a quasiconformal homeomorphism $\phi_n\colon U_n\to X_n$, where $U_n\subset \R^2$ is an open set. By Lemma \ref{lemma:length}, 
    \begin{align*}
        C(L)\ell(\phi_n\circ \beta)\leq \ell(f\circ \phi_n\circ \beta)\leq L\ell(\phi_n\circ \beta)
    \end{align*}
    holds for every curve $\beta$ in $U_n$ outside a curve family $\Gamma_n$ with $\mod\Gamma_n=0$, where $C(L)>0$ is some constant with $C(1)=1$. Since $\phi_n$ is quasiconformal, $\mod\phi_n(\Gamma_n)=0$ for each $n\in \N$. Note that if $\gamma$ is a curve in $X_n$ outside $\phi_n(\Gamma_n)$, then after setting $\beta=\phi_n^{-1}\circ\gamma$ we see that the statement of Theorem \ref{thm:main} \ref{main:length} holds for $\gamma$. We define $\Gamma_0$ to be the family of curves in $X$ that have a subcurve in some $\phi_n(\Gamma_n)$, $n\in \N$. Then $\mod \Gamma_0=0$ and the conclusions of Theorem \ref{thm:main} \ref{main:length} hold for all curves $\gamma$ in $X$ outside $\Gamma_0$. 
\end{proof}

\subsection{Injectivity}
In this section we establish Theorem \ref{thm:main} \ref{main:injective}. The main difficulty is to establish the injectivity of $f$. A map $f\colon X\to Y$ is \textit{light} if $f^{-1}(y)$ is totally disconnected for each $y\in Y$.

\begin{lemma}\label{lemma:light}
    Suppose that $Y$ is reciprocal. Let $f\colon X\to Y$ be a non-constant continuous map such that there exists $K>0$ with the property that $\mod \Gamma\leq K\mod f(\Gamma)$ for each curve family $\Gamma$ in $X$. Then $f$ is a light map. 
\end{lemma}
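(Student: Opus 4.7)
The plan is to argue by contradiction: suppose $f^{-1}(y_0)$ has a non-degenerate connected component $K$, and produce a curve family $\Gamma$ in a neighborhood of a point of $K$ whose $2$-modulus is positive but whose image $f(\Gamma)$ consists of non-constant curves through $y_0$, so that reciprocity of $Y$—specifically condition \eqref{reciprocality:3}—forces $\mod f(\Gamma)=0$, contradicting the hypothesis $\mod\Gamma\leq K\mod f(\Gamma)$. To obtain the positive-modulus family, I would transfer the construction to the Euclidean disk via the weakly quasiconformal parametrization from Theorem \ref{theorem:wqc}.

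Fix $p\in K$ and a small topological disk $U$ about $p$ with $\bar U$ compact and homeomorphic to a closed disk, chosen so that $K\not\subset \bar U$ (possible because $K$ is non-degenerate) and $\bar U$ contains a point $q_0\notin f^{-1}(y_0)$. Applying Theorem \ref{theorem:wqc} to $\bar U$ yields a weakly $(4/\pi)$-quasiconformal map $\phi\colon \bar{\mathbb D}\to \bar U$. Let $K_1$ be the connected component of $p$ in $K\cap \bar U$. The boundary-bumping lemma of continuum theory forces $K_1$ to meet $\partial U$, so $K_1$ is a non-degenerate continuum. Since $\phi$ is monotone (being cell-like between homeomorphic compact surfaces), a standard fiber-decomposition argument shows that $\tilde K:=\phi^{-1}(K_1)$ is a non-degenerate continuum in $\bar{\mathbb D}$. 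Choose $\tilde q_0\in \phi^{-1}(q_0)$ and let $\tilde L:=\bar B(\tilde q_0,r)$ with $r$ small enough that $\tilde L\cap \tilde K=\emptyset$ and $\phi(\tilde L)\cap f^{-1}(y_0)=\emptyset$; such $r$ exists by continuity of $\phi$ and closedness of $f^{-1}(y_0)$. Standard Euclidean modulus estimates for disjoint non-degenerate continua in $\bar{\mathbb D}$ then give $\mod\Gamma>0$ for $\Gamma:=\Gamma(\tilde K,\tilde L;\bar{\mathbb D})$. Each $\phi\circ\gamma$ with $\gamma\in\Gamma$ joins $K_1\subset f^{-1}(y_0)$ to $\phi(\tilde L)\subset X\setminus f^{-1}(y_0)$, so $f\circ\phi\circ\gamma$ is a non-constant curve in $Y$ through $y_0$ with a far endpoint at positive distance $\delta>0$ from $y_0$, uniformly in $\gamma$ by compactness of $\phi(\tilde L)$. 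Reciprocity of $Y$ forces $\mod f(\phi(\Gamma))=0$, and the chain $\mod\Gamma\leq (4/\pi)\mod \phi(\Gamma)\leq (4/\pi)K\mod f(\phi(\Gamma))=0$ closes the contradiction.

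The hard part is the topological set-up—arranging a single disk $U$ that simultaneously witnesses $K\not\subset \bar U$ and contains a point outside $f^{-1}(y_0)$. If no such $p\in K$ can be found, then $K$ lies in the interior of $f^{-1}(y_0)$, and local connectedness of the surface forces $K$ to be clopen in $X$, hence a whole connected component on which $f$ is constant. In that degenerate case, one directly applies Theorem \ref{theorem:wqc} to an open subdisk of $K$ to produce a positive-modulus family that $f$ sends to constant curves, yielding $\mod f(\Gamma)=0$ and the same contradiction without invoking reciprocity at all. The remaining verifications—boundary bumping, monotone-preimage-is-connected, and the quantitative non-vanishing of $\mod\Gamma$ in $\bar{\mathbb D}$—are standard and robust.
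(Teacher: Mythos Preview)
Your main argument is correct and follows the same contradiction scheme as the paper, but you work much harder than necessary. The paper's proof is three lines: pick any non-degenerate continuum $E\subset f^{-1}(y)$ and any non-degenerate continuum $F\subset X\setminus f^{-1}(y)$ (which exists because $f$ is non-constant and $X\setminus f^{-1}(y)$ is open in a surface), then invoke \cite{Raj:17}*{Proposition~3.5} directly to obtain $\mod\Gamma(E,F;X)>0$, while reciprocity via \eqref{reciprocality:3} gives $\mod f(\Gamma)=0$. There is no need to localize to a disk, pull back through a weakly quasiconformal parametrization, or invoke boundary bumping; Rajala's positive-modulus bound already holds for any two disjoint non-degenerate continua in a metric surface of locally finite $\mathcal H^2$. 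Your route is essentially a re-derivation of that proposition in a special case via Theorem~\ref{theorem:wqc}, which is valid but circuitous, and the artificial case split it forces on you is where the trouble enters.

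Your degenerate-case argument contains an actual error. If $f$ maps every $\gamma\in\Gamma$ to a constant curve, then under the paper's definition of modulus (admissibility requires $\int_\gamma\rho\,ds\geq 1$ for every rectifiable $\gamma\in\Gamma$, and a constant curve is rectifiable with length zero) no $\rho$ is admissible for $f(\Gamma)$, so $\mod f(\Gamma)=\infty$, not $0$; the inequality $\mod\Gamma\leq K\mod f(\Gamma)$ then yields no contradiction. The correct way to dispose of this case is immediate once you have shown $K$ is clopen: since a surface is connected, $K=X$ and $f$ is constant, contradicting the hypothesis outright. So the degenerate case is vacuous and no modulus computation is needed there.
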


\begin{proof}
Let $y\in Y$ and suppose that $f^{-1}(y)$ contains a non-degenerate continuum $E$. Consider a non-degenerate continuum $F\subset X\setminus f^{-1}(y)$; note that the latter set is non-empty because $f$ is non-constant. The family $\Gamma$ of curves joining $E$ and $F$ has positive modulus \cite{Raj:17}*{Proposition 3.5}. On the other hand, each curve of $f(\Gamma)$ joins the continuum $f(F)$ to $y$. Since $Y$ is reciprocal, we have $\mod f(\Gamma)=0$ (see \eqref{reciprocality:3}). This is a contradiction.  
\end{proof}

For $y_0\in Y$ and $r>0$ we denote by $S(y_0,r)$ the set $\{y\in Y: d(y,y_0)=r\}$. 

\begin{lemma}\label{lemma:circle_jordan}
    Let $y_0\in Y$ and $K\subset Y\setminus \{y_0\}$ be a closed set. There exists $\delta>0$ such that for a.e.\ $r\in (0,\delta)$ there exists a component $E\subset S(y_0,r)$ that is a rectifiable Jordan curve separating $y_0$ and $K$. 
\end{lemma}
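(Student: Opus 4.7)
The idea is to apply the coarea inequality to the $1$-Lipschitz function $u(y)=d(y,y_0)$, whose level sets are exactly the spheres $S(y_0,r)$. Since $Y$ is a surface without boundary and $K$ is closed with $y_0\notin K$, I first choose a closed topological disk $\bar U\subset Y$ with $y_0\in\inter_{\top}(\bar U)$ and $\bar U\cap K=\emptyset$, and then pick $\delta>0$ small enough that $\bar B(y_0,\delta)\subset \inter_{\top}(\bar U)$; this is possible because $\dist(y_0,Y\setminus \inter_{\top}(\bar U))>0$. In particular $S(y_0,r)\subset \inter_{\top}(\bar U)$, and $d(a,y_0)>r$ for every $a\in \partial_{\top}\bar U$ and every $r\in(0,\delta)$.

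Applying Theorem \ref{theorem:coarea:classical}\ref{coarea:1} to $u|_{\bar U}$ with $g=\chi_{\bar U}$ and using $\lip(u)\leq 1$ gives
\begin{equation*}
    \int_0^\delta \mathcal H^1(S(y_0,r))\,dr\leq \frac{4}{\pi}\mathcal H^2(\bar U)<\infty,
\end{equation*}
so $\mathcal H^1(S(y_0,r))<\infty$ for a.e.\ $r\in(0,\delta)$. For each such $r$ the compact set $S(y_0,r)$ has every connected component a Peano continuum by Lemma \ref{lemma:peano_hausdorff}. Taking the union over all such $r$ of their components yields a pairwise disjoint family of continua inside $\bar U$, which is homeomorphic to a subset of $\R^2$. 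Corollary \ref{corollary:parametrizations} (via Moore's triod theorem) then implies that at most countably many of these components are neither Jordan arcs nor Jordan curves, so at most countably many values of $r$ are ``bad''. Consequently, for a.e.\ $r\in(0,\delta)$, $\mathcal H^1(S(y_0,r))<\infty$ and every component of $S(y_0,r)$ is a Jordan arc or a Jordan curve.

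Now fix such a good $r$ and any $a\in \partial_{\top}\bar U$. The intermediate value theorem applied to $u$ along paths in $\bar U$ shows that the component of $\bar U\setminus S(y_0,r)$ containing $y_0$ is contained in $B(y_0,r)$, so it does not contain $a$; hence $S(y_0,r)$ separates $y_0$ from $a$ in $\bar U$. By Lemma \ref{lemma:separation_component} there is a component $E$ of $S(y_0,r)$ that also separates $y_0$ from $a$ in $\bar U$. Since $E\subset \inter_{\top}\bar U$ lies entirely in the interior, a classical planar-topology fact (the complement in a closed disk of a Jordan arc with both endpoints in the interior of the disk is connected) rules out $E$ being a Jordan arc; thus $E$ is a Jordan curve. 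Its rectifiability follows from $\mathcal H^1(E)\leq \mathcal H^1(S(y_0,r))<\infty$ together with Lemma \ref{lemma:length_hausdorff}. Finally, the Jordan curve theorem in $\inter_{\top}\bar U\cong \R^2$ yields a Jordan domain $D$ bounded by $E$ on the side containing $y_0$; then $D$ is open in $Y$ and $\bar D=D\cup E\subset \bar U$, so $K\subset Y\setminus \bar U\subset Y\setminus \bar D$, and $(D,Y\setminus \bar D)$ exhibits the separation of $y_0$ from $K$ by $E$ in $Y$.

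The main obstacle is forcing the separating component $E$ to be a Jordan curve for a.e.\ $r$. This combines two ingredients: the global application of Corollary \ref{corollary:parametrizations} across all level sets simultaneously, which excludes wild Peano-continuum components on a full-measure set of radii, and the topological fact that a Jordan arc with both endpoints in the interior of a disk cannot separate the disk. Together, these eliminate the possibility that the separating component found by Lemma \ref{lemma:separation_component} is anything other than a Jordan curve.
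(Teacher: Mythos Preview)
Your argument is correct and follows essentially the same route as the paper's proof: choose a closed disk $\bar U$ around $y_0$ missing $K$, use the Lipschitz coarea inequality to get $\mathcal H^1(S(y_0,r))<\infty$ for a.e.\ $r$, apply Corollary~\ref{corollary:parametrizations} across all level sets to force components to be Jordan arcs or curves for a.e.\ $r$, invoke Lemma~\ref{lemma:separation_component} to find a separating component, and rule out arcs by the connectedness of a disk minus an interior arc. The only cosmetic difference is that the paper arranges $Y\setminus U$ to be connected, which lets it pass directly from ``$E$ separates $y_0$ from $\partial U$'' to ``$E$ separates $y_0$ from $K$'' without the explicit Jordan--curve--theorem step you carry out at the end.
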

\begin{proof}
    Let $U\subset Y$ be the interior of a topological closed disk $\bar U\subset Y$ such that $Y\setminus U$ is connected, $y_0\in U$, and $K\subset Y\setminus U$. Note that $\mathcal H^2(\bar U)<\infty$. Let $\delta>0$ be sufficiently small such that $\bar B(y_0,\delta)\subset U$. Then for all $r\in (0,\delta)$ the set $S(y_0,r)$ is compact. By the coarea inequality for Lipschitz functions (Theorem \ref{theorem:coarea:classical}), $\mathcal H^1(S(y_0,r))<\infty$ for a.e.\ $r\in (0,\delta)$. By Corollary \ref{corollary:parametrizations} (see also \cite{Nt:monotone}*{Theorem 1.5}), for a.e.\ $r\in (0,\delta)$, each component of $S(y_0,r)$ is a rectifiable Jordan arc or Jordan curve. Fix such a parameter $r$. Since $S(y_0,r)$ separates $y_0$ from all points of $\partial U$, by Lemma \ref{lemma:separation_component} there exists a component $E$ of $S(y_0,r)$ that separates $y_0$ from $\partial U$. In particular, $E$ must be a Jordan curve and separates $y_0$ from $K$.
\end{proof}

\begin{lemma}\label{lemma:preimage_jordan}
Let $Z\subset X$ be homeomorphic to a topological closed disk and let $f\colon Z\to Y$ be a continuous map in $N^{1,2}(Z,Y)$. For every $y_0\in Y$ and for a.e.\ $r\in (0,\infty)$, each component of $f^{-1}(S(y_0,r))$ is a Jordan arc or a Jordan curve.
\end{lemma}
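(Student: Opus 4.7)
The plan is to apply the coarea inequality for continuous Sobolev functions (Theorem~\ref{theorem:coarea:introduction}\ref{ca:ii}) to the composition $u = d(y_0,\cdot)\circ f\colon Z\to\R$. Since $d(y_0,\cdot)$ is $1$-Lipschitz, the minimal $2$-weak upper gradient $g_f\in L^2(Z)$ of $f$ serves as a $2$-weak upper gradient of $u$, so $u\in N^{1,2}(Z)$. Moreover $u$ is continuous and its level sets are exactly $f^{-1}(S(y_0,r))$. Because $Z$ is compact, Cauchy--Schwarz gives $\int_Z g_f\,d\mathcal H^2<\infty$, and Theorem~\ref{theorem:coarea:introduction}\ref{ca:ii} applied with $g=\chi_Z$ yields
\begin{align*}
    \int\displaylimits^*\mathcal H^1(u^{-1}(r)\cap\mathcal A_u)\,dr<\infty.
\end{align*}
It follows that for a.e.\ $r$ the union $u^{-1}(r)\cap\mathcal A_u$ of the non-degenerate components of $u^{-1}(r)$ has finite $\mathcal H^1$-measure, so by Lemma~\ref{lemma:peano_hausdorff} every non-degenerate component of $u^{-1}(r)$ is a Peano continuum.

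To promote ``Peano continuum'' to ``Jordan arc or Jordan curve'', I would invoke Lemma~\ref{lemma:triod}, which implies that any Peano continuum that is neither a Jordan arc nor a Jordan curve contains a triod. Since the level sets of $u$ are pairwise disjoint, triods sitting inside non-degenerate components of $u^{-1}(r)$ for distinct values of $r$ form a pairwise disjoint family of triods in $Z$, which embeds in $\R^2$ by hypothesis. Moore's triod theorem, as quoted in the proof of Lemma~\ref{lemma:triod}, forces this whole family to be at most countable. Consequently, the set of parameters $r$ for which $u^{-1}(r)$ has a non-Jordan non-degenerate component is at most countable, hence of Lebesgue measure zero. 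Discarding this set together with the exceptional null set from the coarea step, we conclude that for a.e.\ $r\in(0,\infty)$ every non-degenerate component of $f^{-1}(S(y_0,r))$ is a Jordan arc or a Jordan curve; the remaining degenerate components are singletons, which qualify as degenerate Jordan arcs by the convention of the paper.

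The main substantive difficulty is bridging the gap between ``for a.e.\ $r$'' and ``every component'': the coarea inequality controls only one level set at a time, and a single level set can a priori contain countably many non-Jordan Peano continua. What makes the argument succeed is the interaction with Moore's theorem on disjoint triods, which aggregates all possible bad components across all $r$ into a single countable set of bad parameters. Every other ingredient is already at hand: the chain rule for $2$-weak upper gradients under Lipschitz post-composition, Theorem~\ref{theorem:coarea:introduction}\ref{ca:ii}, the Eilenberg--Harrold characterization via Lemma~\ref{lemma:peano_hausdorff}, and the triod dichotomy of Lemma~\ref{lemma:triod}.
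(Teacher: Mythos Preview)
Your argument is correct and follows essentially the same route as the paper: define $u(x)=d(f(x),y_0)$, apply the coarea inequality of Theorem~\ref{theorem:coarea:introduction}\ref{ca:ii} to conclude that $\mathcal H^1(u^{-1}(r)\cap\mathcal A_u)<\infty$ for a.e.\ $r$, and then use the triod dichotomy to upgrade finite-$\mathcal H^1$ components to Jordan arcs or curves. The only difference is cosmetic: the paper invokes Corollary~\ref{corollary:parametrizations} directly (which already packages Lemmas~\ref{lemma:triod} and~\ref{lemma:peano_hausdorff} together with Moore's theorem), whereas you unpack that corollary by hand, applying Lemma~\ref{lemma:peano_hausdorff} and then Moore's triod theorem separately to rule out all but countably many bad parameters~$r$.
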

\begin{proof}
    Define $u(x)=d(f(x),y_0)$ on $Z$, which is continuous and lies in $N^{1,2}(Z)$. Observe that $u^{-1}(r)=f^{-1}(S(y_0,r))$ for every $r>0$. By the coarea inequality of Theorem \ref{theorem:coarea:introduction} we see that  $\mathcal H^1(u^{-1}(r)\cap \mathcal A_u)<\infty$ for a.e.\ $r>0$. In particular, for such values $r$, if $E$ is a non-degenerate component of $u^{-1}(r)$, then $E\subset \mathcal A_u$, so $\mathcal H^1(E)<\infty$. By Corollary \ref{corollary:parametrizations}, for a.e.\ $r>0$, every non-degenerate component of $u^{-1}(r)$ is a Jordan arc or a Jordan curve. 
\end{proof}

\begin{lemma}\label{lemma:injectivity}
    Let $f\colon X\to Y$ be a continuous light map in $N^{1,2}(X,Y)$ such that $N(f,y)\leq 1$ for a.e.\ $y\in Y$. Then $N(f,y)\leq 1$ for every $y\in Y$. In particular, $f$ is injective. 
\end{lemma}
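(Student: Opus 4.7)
The plan is to argue by contradiction. Suppose there exist distinct $x_1, x_2 \in f^{-1}(y_0)$ for some $y_0 \in Y$. I will exhibit a non-empty open set $V \subset Y$ on which $N(f, \cdot) \geq 2$; since $\mathcal H^2$ is strictly positive on non-empty open subsets of $Y$ (a fact that follows in the paper's framework from the weakly quasiconformal parametrization Theorem \ref{theorem:wqc} and Lemma \ref{lemma:wqc_derivative}, together with the observation that a Sobolev map whose minimal weak upper gradient vanishes is constant on connected components), this contradicts $N(f, y) \leq 1$ for a.e.\ $y$.

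First, I isolate the two preimages. Lightness makes $f^{-1}(y_0)$ totally disconnected, and, working locally, it is compact, so it admits a partition into disjoint compact clopen sets $C_1 \ni x_1$ and $C_2 \ni x_2$. Using the 2-manifold structure of $X$ and the fact that a closed totally disconnected subset of $\R^2$ can be enclosed by an arbitrarily close ambient Jordan curve avoiding it, I find disjoint closed topological disks $Z_1, Z_2 \subset X$ with $C_i \subset \inter(Z_i)$ and $\partial Z_i \cap f^{-1}(y_0) = \emptyset$. Setting $K = f(\partial Z_1) \cup f(\partial Z_2)$, this is a compact set in $Y$ avoiding $y_0$. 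Lemma \ref{lemma:circle_jordan} provides, for almost every small $r > 0$, a Jordan curve $E_r \subset S(y_0, r)$ separating $y_0$ from $K$, and Lemma \ref{lemma:preimage_jordan} applied to $f|_{Z_i}$ ensures that, for almost every $r$, each component of $f^{-1}(S(y_0, r)) \cap Z_i$ is a Jordan arc or curve. Fix $r$ in the intersection of these almost-everywhere sets and let $V$ be the component of $Y \setminus E_r$ containing $y_0$, so that $\bar V \cap K = \emptyset$.

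For $i = 1, 2$, let $C_i^*$ denote the connected component of $f^{-1}(V) \cap Z_i$ containing $x_i$. From $f(\partial Z_i) \subset K$ and $\bar V \cap K = \emptyset$, one checks that $\bar{C_i^*} \subset \inter(Z_i)$ and $\partial C_i^* \subset f^{-1}(E_r) \cap \inter(Z_i)$. Since $\partial C_i^*$ separates $x_i$ from $\partial Z_i$ in the closed disk $Z_i$, Lemma \ref{lemma:separation_component} produces a component $J_i$ of $\partial C_i^*$ with the same separating property. Being a closed connected subset of a Jordan arc or Jordan curve (via Lemma \ref{lemma:preimage_jordan}) and separating an interior point of $Z_i$ from its boundary, $J_i$ must itself be a Jordan curve, bounding an open topological disk $D_i \subset \inter(Z_i)$ containing $x_i$.

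The main obstacle is to establish that $f(C_i^*) = V$, from which $V \subset f(C_1^*) \cap f(C_2^*)$ together with $C_1^* \cap C_2^* = \emptyset$ forces $N(f, y) \geq 2$ for every $y \in V$, delivering the contradiction. Properness of $f|_{C_i^*} \colon C_i^* \to V$ (which follows because $\bar{C_i^*}$ is compact and its boundary maps into $E_r \subset Y \setminus V$) implies that $f(C_i^*)$ is closed in $V$: any sequence in $C_i^*$ whose $f$-images converge inside $V$ subconverges in $\bar{C_i^*}$, and a limit on $\partial C_i^*$ would map into $E_r$, disjoint from $V$. Combined with the connectedness of $V$ and $y_0 \in f(C_i^*)$, the task reduces to showing that $f(C_i^*)$ is also open in $V$. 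This is the technically delicate step, and the hardest part of the proof: my plan is to establish openness either via a Brouwer-degree computation for the map $f|_{\bar D_i} \colon \bar D_i \to \bar V$, exploiting the fact that $J_i$ arises as a component of $\partial C_i^*$ to force nonzero winding of $f|_{J_i} \colon J_i \to E_r$, or by an inductive application of the full Jordan-curve construction with an arbitrary $w \in C_i^*$ in place of $x_i$ and $f(w)$ in place of $y_0$, producing a topological disk around $f(w)$ contained in $f(C_i^*)$.
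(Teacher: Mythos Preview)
Your setup closely parallels the paper's: disjoint closed disks $Z_1,Z_2$ around the two preimages, Lemma~\ref{lemma:circle_jordan} to pick a Jordan curve $E_r\subset S(y_0,r)$, and Lemma~\ref{lemma:preimage_jordan} to control the components of $f^{-1}(S(y_0,r))\cap Z_i$. The construction of the Jordan curves $J_i\subset \inter(Z_i)$ with $f(J_i)\subset E_r$ is essentially the same object the paper produces (there denoted $F_i$).

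The gap is in the endgame. Neither of your two plans for openness of $f(C_i^*)$ works as stated, and in particular the fact that ``$J_i$ arises as a component of $\partial C_i^*$'' does \emph{not} force nonzero winding of $f|_{J_i}\colon J_i\to E_r$. A concrete obstruction is the fold $(x,y)\mapsto(x,y^2)$: it is light, $f(0)=0$, and for $V=B(0,r)$ the set $C^*=\{x^2+y^4<r^2\}$ is bounded by a single Jordan curve $J$ mapping onto the upper half of $E_r$ with winding number~$0$; accordingly $f(C^*)$ is only the upper half of $V$. Of course this map violates the a.e.\ hypothesis, but that is the point: the hypothesis $N(f,\cdot)\le 1$ a.e.\ has to enter \emph{before} the contradiction, in the analysis of $f|_{J_i}$, and your outline uses it only afterward. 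Your inductive Plan~B runs into the same wall: showing that $f(w)$ is interior to $f(C_i^*)$ is the same openness question at a different center.

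The paper supplies exactly the missing step and then short-circuits the degree argument entirely. Via the coarea inequality applied to $y\mapsto d(y,y_0)$, one gets that for a.e.\ $r$, $\mathcal H^1$-a.e.\ point of $S(y_0,r)$ has at most one $f$-preimage. A continuous map $J_i\cong S^1\to E_r\cong S^1$ whose image is a proper subarc necessarily has two preimages over a dense set of image points, so this forces $f(J_i)=E_r$. Applying this for $i=1,2$ already yields that every point of $E_r$ has at least two preimages, contradicting the $\mathcal H^1$-a.e.\ uniqueness on $S(y_0,r)$; no openness or $\mathcal H^2$-positivity is needed. If you want to salvage your route, the same $\mathcal H^1$-a.e.\ uniqueness also forces $\deg(f|_{J_i}\colon J_i\to E_r)=\pm 1$, after which degree theory gives $V\subset f(D_i)$ and your contradiction follows---but this is strictly more work than the paper's direct $1$-dimensional argument.
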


\begin{proof}
Let $y\in f(X)$ and $x\in f^{-1}(y)$. For the moment, we consider the restriction $g=f|_Z$ to a compact neighborhood $Z$ of $x$ that is homeomorphic to a closed disk and contains $x$ in its interior. Since $g$ is light, it is non-constant on $\inter(Z)$ and there exists a point $z\in \inter(Z)\setminus  g^{-1}(y)$. Note that for each $r\in (0, d(y,g(z)))$ the set $S(y,r)$ separates $y$ from $g(z)$. Therefore, the compact set $g^{-1}(S(y,r))$ separates $x$ from $z$. By Lemma \ref{lemma:preimage_jordan}, for a.e.\ $r>0$, each component of $g^{-1}(S(y,r))$ is a Jordan arc or a Jordan curve. Combining these facts with Lemma \ref{lemma:separation_component}, we see that there exists a full measure subset $I$ of $(0,d(y,g(z)))$ such that for each $r\in I$, there exists a component of $g^{-1}(S(y,r))$ that separates $x$ from $z$ and is a Jordan arc or a Jordan curve. 

We claim for all sufficiently small $r\in I$, each such component must be a Jordan curve. To prove this, suppose that there exists a sequence of positive numbers $r_n\to 0$ and components $F_{r_n}$  of $g^{-1}(S(y,r_n))$ that are Jordan arcs and separate $x$ from $z$. Fix a continuum $K\subset \inter(Z)$ connecting $x$ and $z$. Since $F_{r_n}$ separates $x$ from $z$, it intersects $K$. Moreover, since $F_{r_n}$ is a Jordan arc, it cannot be contained in $\inter(Z)$, as $\inter(Z)\setminus F_{r_n}$ would then be connected. Therefore, $F_{r_n}$ intersects $\partial Z$ and 
$$\diam(F_{r_n})\geq \dist(K,\partial Z)>0$$ 
for all $n\in \N$. After passing to a subsequence, $F_{r_n}$ converges in the Hausdorff sense to a non-degenerate continuum $F$. Since $r_n\to 0$, we have that $F\subset g^{-1}(y)$. This contradicts the lightness of $g$. The claim is proved. 

By the assumption that $N(f,w)\leq 1$ for a.e.\ $w\in Y$ and the coarea inequality for Lipschitz functions (Theorem \ref{theorem:coarea:classical}), we see that for a.e.\ $r>0$, $\mathcal H^1$-a.e.\ point of $S(y,r)$ has at most one preimage under $f$. Also, given a closed set $K\subset Y\setminus \{y\}$, by Lemma \ref{lemma:circle_jordan}, for a.e.\ sufficiently small $r>0$ there exists a Jordan curve $E\subset S(y,r)$ separating $y$ from $K$.  Altogether, there exists $\delta'>0$ and a set $I'\subset (0,\delta')$ of full measure so that for every $r\in I'$ the following statements are true. 
\begin{enumerate}
    \item $\mathcal H^1$-a.e.\ point of $S(y,r)$ has at most one preimage under $f$.\label{inj:i}
    \item There exists a component of $S(y,r)$ that is a Jordan curve separating $y$ and $K$.\label{inj:ii}
    \item Each component of $g^{-1}(S(y,r))$ that separates $x$ and $z$ is a Jordan curve. \label{inj:iii}
\end{enumerate}

Let $E$ be a component of $S(y,r)$, $r\in I'$, that is a Jordan curve and let $F\subset g^{-1}(E)$ be a Jordan curve. We claim that $g(F)=E$. By \eqref{inj:i}, $\mathcal H^1$-a.e.\ point of $E$ has at most one preimage under $g$. The map $g|_F$ is conjugate to a continuous map $\phi \colon \mathbb S^1\to \mathbb S^1$ with the property that a dense set of points of $\mathbb S^1$ have at most one preimage. Suppose that $g(F)$ is a strict subarc of $E$. Note that $g(F)$ cannot be a point since $g$ is light. Then  $\phi(\mathbb S^1)$ is a non-degenerate strict subarc of $\mathbb S^1$. This contradicts the fact that a dense set of points of $\mathbb S^1$ have at most one preimage. We have shown the following.
\begin{enumerate}\setcounter{enumi}{3}
    \item If $E$ is a component of $S(y,r)$ that is a Jordan curve and $F\subset g^{-1}(E)$ is a Jordan curve, then $g(F)=E$. \label{inj:iv}
\end{enumerate}

We have completed our preparation to show the injectivity of $f$.
Suppose that $f^{-1}(y)$ contains two points $x_1,x_2$ for some $y\in f(X)$. We consider disjoint topological closed disks $Z_1,Z_2\subset X$ such that $x_i\in \inter(Z_i)$, $i=1,2$. We also fix $z_i\in \inter(Z_i)\setminus f^{-1}(y)$. Consider the restrictions $g_i=f|_{Z_i}$, $i=1,2$. By the previous, for $i=1,2$, there exists a set $I_i'$ of full measure in an interval $(0,\delta_i')$, such that \eqref{inj:i}--\eqref{inj:iv} are true for the map $g_i$; specifically, in \eqref{inj:ii} we use the set  $K=\{f(z_1),f(z_2)\}$. Let $I'=I_1'\cap I_2'$, which has full measure in $(0,\delta')$, where $\delta'=\min \{\delta_1',\delta_2'\}$. By \eqref{inj:ii}, for $r\in I'$ there exists a component $E$ of $S(y,r)$ that is a Jordan curve separating each of the pairs $(y,f(z_1))$ and $(y,f(z_2))$. Let $F_i$ be a component of $g_i^{-1}(E)$ that separates $x_i$ and $z_i$, $i=1,2$; such components exist by Lemma \ref{lemma:separation_component}. Note that $F_i$ is also a component of $g_i^{-1}(S(y,r))$. By \eqref{inj:iii}, $F_i$ is a Jordan curve for $i=1,2$. By \eqref{inj:iv}, we conclude that $g_i(F_i)=E$, $i=1,2$. Thus, each point of $E$ has at least two preimages under $f$. This contradicts \eqref{inj:i}.
\end{proof}

\begin{lemma}\label{lemma:qc_lengths}
   Let $f\colon X\to Y$ be an area-preserving map that is a quasi\-conformal homeomorphism. Suppose that there exists $K\geq 1$ such that $${K}^{-1/2}\ell(\gamma)\leq \ell(f\circ \gamma)\leq {K}^{1/2} \ell (\gamma)$$
for all curves $\gamma$ in $X$ outside a curve family $\Gamma_0$ with $\mod\Gamma_0=0$.  Then $f$ is $K$-quasiconformal. 
\end{lemma}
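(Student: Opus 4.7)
The plan is to apply the analytic characterization of quasiconformality from Theorem \ref{theorem:definitions_qc}, exploiting the length assumption to produce a \emph{constant} $2$-weak upper gradient of $f$ and the area-preserving property to turn it into the sharp integral bound. The strategy is completely symmetric in $f$ and $f^{-1}$, which will yield both directions of the modulus inequality.

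For the first direction, I would set $g\equiv K^{1/2}$. For any rectifiable curve $\gamma\colon[a,b]\to X$ outside the exceptional family $\Gamma_0$, the hypothesis gives
$$d_Y(f(\gamma(a)),f(\gamma(b)))\leq \ell(f\circ\gamma)\leq K^{1/2}\ell(\gamma)=\int_\gamma g\,ds,$$
so $g$ is a $2$-weak upper gradient of $f$. Since $f$ is quasiconformal, $f\in N^{1,2}_{\loc}(X,Y)$ by Theorem \ref{theorem:definitions_qc}. Because $f$ is a homeomorphism, $f(f^{-1}(E))=E$ for every Borel $E\subset Y$, and the area-preserving property yields $\mathcal H^2(f^{-1}(E))=\mathcal H^2(E)$. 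Therefore
$$\int_{f^{-1}(E)} g^{2}\,d\mathcal H^{2}=K\,\mathcal H^{2}(f^{-1}(E))=K\,\mathcal H^{2}(E),$$
and Theorem \ref{theorem:definitions_qc} delivers $\mod\Gamma\leq K\mod f(\Gamma)$ for every curve family $\Gamma$ in $X$.

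For the reverse inequality, I would apply the same argument to $f^{-1}\colon Y\to X$. The map $f^{-1}$ is a quasiconformal homeomorphism (interchanging roles in the defining inequality) and is itself area-preserving since $\mathcal H^2((f^{-1})^{-1}(A))=\mathcal H^2(f(A))=\mathcal H^2(A)$ for every Borel $A\subset X$. Moreover, any curve $\eta$ in $Y$ may be written as $\eta=f\circ\gamma$ with $\gamma=f^{-1}\circ\eta$, and whenever $\gamma\notin\Gamma_0$ the hypothesis $K^{-1/2}\ell(\gamma)\leq\ell(f\circ\gamma)$ rearranges to $\ell(f^{-1}\circ\eta)\leq K^{1/2}\ell(\eta)$. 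The exceptional family is $f(\Gamma_0)$, which has modulus zero because $f$ is quasiconformal. Repeating the previous argument for $f^{-1}$ therefore produces $\mod\Gamma'\leq K\mod f^{-1}(\Gamma')$ for every curve family $\Gamma'$ in $Y$; applying this with $\Gamma'=f(\Gamma)$ gives $\mod f(\Gamma)\leq K\mod\Gamma$.

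Combining the two inequalities yields $K^{-1}\mod\Gamma\leq\mod f(\Gamma)\leq K\mod\Gamma$, which is precisely $K$-quasiconformality. There is no real obstacle once one recognizes that the length distortion assumption directly supplies the constant upper gradient $K^{1/2}$ and that area-preservation converts the resulting integral over $f^{-1}(E)$ into the sharp bound $K\mathcal H^{2}(E)$ required by the analytic definition; the only mild subtlety is keeping track of the symmetric roles of $\Gamma_0$ and $f(\Gamma_0)$ when passing to $f^{-1}$.
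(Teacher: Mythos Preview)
Your proof is correct and follows essentially the same approach as the paper's: both arguments take the constant $K^{1/2}$ as a $2$-weak upper gradient, use the area-preserving property to obtain the sharp bound $\int_{f^{-1}(E)} K\,d\mathcal H^2 = K\mathcal H^2(E)$, invoke Theorem~\ref{theorem:definitions_qc} to conclude weak $K$-quasiconformality, and then run the symmetric argument for $f^{-1}$, using the quasiconformality of $f$ to ensure the exceptional family on the $Y$-side still has modulus zero.
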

\begin{proof}
    The constant function $K^{1/2}$ is a $2$-weak upper gradient of $f$ in $N^{1,2}_{\loc}(X)$. Moreover, by the preservation of area, for each Borel set $E\subset Y$ we have
    \begin{align*}
        \int_{f^{-1}(E)} K \, d\mathcal H^2= K \mathcal H^2(f^{-1}(E))=K\mathcal H^2(E).
    \end{align*}
    In view of Theorem \ref{theorem:definitions_qc}, we derive that $f$ is weakly $K$-quasiconformal. Since $f$ is quasiconformal, we have
    $$\ell(f^{-1}\circ \gamma)\leq {K}^{1/2} \ell (\gamma) $$
    for all curves $\gamma$ in $Y$ outside a curve family $\Gamma_0'$ with $\mod\Gamma_0'=0$. Thus, the same argument applies to $f^{-1}$ and shows that it is weakly $K$-quasiconformal. Altogether, $f$ is $K$-quasiconformal.  
\end{proof}

\begin{proof}[Proof of Theorem \ref{thm:main} \ref{main:injective}]
    Suppose that $f$ is $L$-Lipschitz and area-pre\-serv\-ing. By Lemma \ref{lemma:multiplicity}, $N(f,y)=1$ for a.e.\ $y\in f(X)$. Also, Lemmas \ref{lemma:weak_qc} and \ref{lemma:light} imply that $f$ is a light map. Now, Lemma \ref{lemma:injectivity} implies that the restriction of $f$ to any precompact open subset $U$ of $X$ (so that $f|_U\in N^{1,2}(U,Y)$) is injective. This implies that $f$ is injective in all of $X$. The invariance of domain theorem implies that $f$ is an embedding. Since $f$ is surjective by assumption, we conclude that $f$ is a homeomorphism.  By Lemma \ref{lemma:weak_qc}, we see that $f$ is a weakly $L^2$-quasiconformal homeomorphism. Since $Y$ is reciprocal, Lemma \ref{lemma:weak_qc_upgrade} yields that $f$ is $K$-quasiconformal for some $K=K(L)\geq 1$. In particular, this implies that $X$ is also reciprocal. 
    
    The final inequality in Theorem \ref{thm:main} \ref{main:injective} involving the lengths follows from Theorem \ref{thm:main} \ref{main:length}. In the case that $f$ is $1$-Lipschitz, we obtain $\ell(\gamma)=\ell(f\circ \gamma)$ for all curves $\gamma$ in $X$ outside a curve family $\Gamma_0$ with $\mod\Gamma_0=0$. By Lemma \ref{lemma:qc_lengths}, we conclude that $f$ is $1$-quasiconformal.    
\end{proof}

\subsection{Bounded length distortion and isometry}
Here we prove Theorem \ref{thm:main} \ref{main:bld}. Our goal is to upgrade the conclusion of Theorem \ref{thm:main} \ref{main:injective} so that the length of every path, rather than almost every path, is quasi-preserved. This is achieved with the aid of upper Ahlfors $2$-regularity. We say that a space is locally upper Ahlfors $2$-regular with constant $K>0$ if each point has a neighborhood $U$ such that $\mathcal H^2(B(x,r))\leq Kr^2$ for all $x\in U$ and $r<\diam(U)$. We denote by $N_r(E)$ the open $r$-neighborhood of a set $E$.

\begin{lemma}\label{lemma:upper_mass}
    Suppose that $Y$ is locally upper Ahlfors $2$-regular with constant $K>0$ and $\gamma$ is a curve in $Y$. Then for all sufficiently small $r>0$ we have
    $$\hm(N_r(|\gamma|))\leq 2Kr\ell(\gamma)+ 8Kr^2.$$
\end{lemma}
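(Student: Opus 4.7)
The plan is a standard covering argument: cover $|\gamma|$ by roughly $\ell(\gamma)/(2r)$ balls of radius $r$ centered on the curve, inflate them to radius $2r$ to cover $N_r(|\gamma|)$, and apply the upper Ahlfors bound to each.

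First I would dispose of the non-rectifiable case: if $\ell(\gamma)=\infty$, the inequality is trivial. So assume $\gamma\colon[a,b]\to Y$ is rectifiable with length $L=\ell(\gamma)<\infty$, and reparametrize $\gamma$ by arclength so that $\gamma\colon[0,L]\to Y$ satisfies $d(\gamma(s),\gamma(t))\le|s-t|$.

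Next I would use compactness to make the local upper Ahlfors regularity uniform near $|\gamma|$. Since $|\gamma|$ is compact and each point has a neighborhood $U$ on which the bound $\mathcal H^2(B(x,\rho))\le K\rho^2$ holds for $x\in U$ and $\rho<\diam(U)$, a finite subcover together with a Lebesgue-number argument gives an $r_0>0$ such that whenever $r<r_0$, every closed ball $\bar B(p,2r)$ with $p\in|\gamma|$ lies inside one of these neighborhoods and satisfies $\mathcal H^2(B(p,2r))\le K(2r)^2=4Kr^2$. From here on I fix such an $r<r_0$.

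Now pick points along the curve at arclength spacing $2r$: set $t_i=2ir$ for $i=0,1,\dots,N-1$ where $N=\lceil L/(2r)\rceil$, and $t_N=L$, and let $p_i=\gamma(t_i)$. Since consecutive $t_i$ differ by at most $2r$, every point $\gamma(s)$ lies within arclength (hence metric) distance $r$ of some $p_i$. Consequently
\begin{equation*}
|\gamma|\subset\bigcup_{i=0}^{N}B(p_i,r),\qquad N_r(|\gamma|)\subset\bigcup_{i=0}^{N}B(p_i,2r).
\end{equation*}
Using the uniform Ahlfors bound and $N+1\le L/(2r)+2$, I subadditively estimate
\begin{equation*}
\mathcal H^2(N_r(|\gamma|))\le (N+1)\cdot 4Kr^2\le\left(\frac{L}{2r}+2\right)\cdot 4Kr^2=2Kr\ell(\gamma)+8Kr^2,
\end{equation*}
which is the claimed inequality.

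I do not anticipate a real obstacle here; the only mildly non-trivial step is the compactness argument in the second paragraph, needed because the hypothesis is local upper Ahlfors regularity rather than a single global constant, and one must verify that $r$ can be chosen small enough that every ball $B(p_i,2r)$ falls inside a neighborhood where the Ahlfors bound applies with radius less than the diameter of that neighborhood.
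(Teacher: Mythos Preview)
Your proposal is correct and follows essentially the same covering argument as the paper: reparametrize by arclength, partition the parameter interval into pieces of length at most $2r$, cover $N_r(|\gamma|)$ by the balls $B(\gamma(t_i),2r)$, and bound the number of balls by $\ell(\gamma)/(2r)+2$. The only difference is that you spell out the compactness argument justifying the uniform bound $\mathcal H^2(B(p,2r))\le 4Kr^2$ for $p\in|\gamma|$, which the paper simply assumes for sufficiently small $r$.
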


\begin{proof}
Without loss of generality, $\gamma\colon [0,\ell(\gamma)]\to X$ is non-constant, rectifiable and para\-metrized by arclength. Assume that $0<r<\ell(\gamma)/2$ and that  for every $x\in|\gamma|$ we have
    $$\hm(B(x,2r))\leq4Kr^2.$$ 
Consider a partition $\{t_0,\dots,t_n\}$ of $[0,\ell(\gamma)]$ such that $|t_{i}-t_{i-1}|\leq 2r$, $i\in \{1,\dots,n\}$, and $(n-1)2r< \ell(\gamma)\leq 2nr$.  Then $\{B( \gamma(t_i),2r)\}_{i=0}^n$ covers $N_r(|\gamma|)$ and we can compute
    \[\hm(N_r(|\gamma|))\leq\sum_{i=0}^n\hm(B(\gamma(t_i),2r))\leq (n+1)4Kr^2\leq 2Kr\ell(\gamma)+8Kr^2.\qedhere\]
\end{proof}

\begin{lemma}\label{lemma:nearby_curves}
    Suppose that $Y$ is locally upper Ahlfors $2$-regular with constant $K>0$. Let $\Gamma_0$ be a curve family in $Y$ with $\mod\Gamma_0=0$. Then for each curve $\gamma\colon [a,b]\to Y$ and for each $\varepsilon>0$ there exists a curve $\gamma_{\varepsilon}\colon [a,b]\to Y$ with the following properties.
    \begin{enumerate}[label=\normalfont(\arabic*)]
        \item $\gamma_{\varepsilon}\notin \Gamma_0$.\label{nearby_curves:Gamma0}
        \item $|\gamma(a)-\gamma_{\varepsilon}(a)|<\varepsilon$, $|\gamma(b)-\gamma_{\varepsilon}(b)|<\varepsilon$, and $|\gamma_{\varepsilon}|\subset N_{\varepsilon}(|\gamma|)$. \label{nearby_curves:nbhd}
        \item $\ell(\gamma_{\varepsilon}) \leq 4\pi^{-1} K\ell(\gamma)+\varepsilon$. \label{nearby_curves:length}
    \end{enumerate}
    Moreover, if $Y$ is Riemannian, then 
    \begin{enumerate}
        \item[\normalfont{(3')}] $\ell(\gamma_{\varepsilon}) \leq  \ell(\gamma)+\varepsilon$. \label{nearbycurves:Riem} 
    \end{enumerate} 
\end{lemma}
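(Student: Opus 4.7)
The plan is to construct $\gamma_\varepsilon$ inside the tubular neighborhood $T_\delta := N_\delta(|\gamma|)$ for a sufficiently small $\delta\in(0,\varepsilon)$, using the coarea inequality to bound the length of a level set of the distance function and extracting from it a Jordan arc joining the endpoints of $\gamma$. First I may assume $\gamma$ is rectifiable and non-constant, since otherwise the conclusion is trivial (either $\ell(\gamma)=\infty$ or one takes $\gamma_\varepsilon$ constant). By Lemma \ref{lemma:peano_hausdorff} the trace $|\gamma|$ is a Peano continuum, hence arcwise connected, so Lemma \ref{lemma:length_hausdorff} lets me replace $\gamma$ by a Jordan arc in $|\gamma|$ joining $\gamma(a)$ to $\gamma(b)$ of length at most $\ell(\gamma)$. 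Moreover, since $Y$ is a topological $2$-manifold, I will cover $|\gamma|$ by finitely many topological disc charts and partition the parameter interval so that it suffices to carry out the construction one piece at a time; after this reduction I may assume $T_\delta$ lies inside a single planar disc chart.

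The coarea estimate is the core computation. The function $u(y)=d(y,|\gamma|)$ is $1$-Lipschitz, so Theorem \ref{theorem:coarea:classical}\ref{coarea:1} together with Lemma \ref{lemma:upper_mass} gives
$$\int_0^\delta \mathcal H^1(u^{-1}(t))\, dt \leq \tfrac{4}{\pi}\mathcal H^2(T_\delta) \leq \tfrac{4}{\pi}\bigl(2K\delta\,\ell(\gamma)+8K\delta^2\bigr).$$
Combining Chebyshev's inequality with Lemma \ref{lemma:avoid} and Corollary \ref{corollary:parametrizations}, I select $t_0\in (0,\delta)$ satisfying: (i) $\mathcal H^1(u^{-1}(t_0))\leq \tfrac{4}{\pi}(2K\ell(\gamma)+8K\delta)$; (ii) every injective Lipschitz curve contained in $u^{-1}(t_0)$ lies outside $\Gamma_0$; and (iii) every non-degenerate component of $u^{-1}(t_0)$ is a Jordan arc or Jordan curve.

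The hard part will be the topological step of extracting from $u^{-1}(t_0)$ a Jordan curve $J$ that ``surrounds'' $|\gamma|$, so that splitting $J$ at two points nearest to $\gamma(a)$ and $\gamma(b)$ produces two Jordan subarcs, the shorter of which has length at most $\tfrac12\mathcal H^1(J)$. After the chart reduction, $T_\delta$ sits inside a planar disc chart and $|\gamma|$ is a Jordan arc there, so for $t_0$ sufficiently small $N_{t_0}(|\gamma|)$ is a topological disc whose boundary is a Jordan curve lying in $u^{-1}(t_0)$; Lemma \ref{lemma:separation_component} and planar Jordan-curve arguments pick it out as the desired component $J$. Taking the shorter subarc $J_0$, property (i) gives $\ell(J_0)\leq \tfrac{4}{\pi}(K\ell(\gamma)+4K\delta)$, and property (ii) gives $J_0\notin\Gamma_0$.

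To finish, I will concatenate $J_0$ with two short curves of length at most $2t_0$ joining $\gamma(a)$, $\gamma(b)$ to the endpoints of $J_0$, chosen generically (using $\mod\Gamma_0=0$) so as to avoid $\Gamma_0$. Taking $\delta$ small enough relative to $\varepsilon$, $K$, $\ell(\gamma)$ yields $\ell(\gamma_\varepsilon)\leq \tfrac{4}{\pi}K\ell(\gamma)+\varepsilon$ and $|\gamma_\varepsilon|\subset N_\varepsilon(|\gamma|)$. In the Riemannian case the topological detour is unnecessary: within each bi-Lipschitz coordinate chart I translate $\gamma$ by a small vector $v$ and use Fubini applied to an $L^2$ admissible function for $\Gamma_0$ furnished by Fuglede's lemma, to find a.e.\ $v$ for which the translate avoids $\Gamma_0$. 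Since the chart is bi-Lipschitz to a Euclidean disc, the translated curve has length arbitrarily close to $\ell(\gamma)$, which upon concatenation over the finite cover yields the sharper bound $\ell(\gamma_\varepsilon)\leq \ell(\gamma)+\varepsilon$.
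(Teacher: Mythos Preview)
Your coarea computation and use of Lemma~\ref{lemma:avoid} match the paper, but the topological extraction and the concatenations have genuine gaps. The claim that $N_{t_0}(|\gamma|)$ is a topological disc with Jordan boundary is unjustified: the chart is only a homeomorphism, so the metric tube in $Y$ is not a Euclidean tube in the chart, and even in the plane such boundaries need not be single Jordan curves. What you actually need is a Jordan \emph{curve} component of $u^{-1}(t_0)$ surrounding $|\gamma|$, and your appeal to Lemma~\ref{lemma:separation_component} does not by itself rule out that the separating component is a Jordan \emph{arc} with endpoints on the boundary of your chosen closed disc. The paper sidesteps this with a quotient trick: since $|\gamma|$ is a simple arc it already has a \emph{single} disc neighborhood $U$ (so no chart partition is needed), and collapsing $|\gamma|$ to a point yields a disc $Z=U/|\gamma|$ on which the quotient map is a local isometry off $|\gamma|$; thus $u^{-1}(t)$ corresponds to the metric sphere $S(y_0,t)$ in $Z$, and Lemma~\ref{lemma:circle_jordan} supplies the rectifiable Jordan curve directly. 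Your concatenations are also problematic---a concatenation of curves outside $\Gamma_0$ may lie in $\Gamma_0$, and short connectors need not exist absent quasiconvexity---and unnecessary: the statement only asks that the endpoints of $\gamma_\varepsilon$ be $\varepsilon$-close to those of $\gamma$. The paper simply takes the shorter subarc of $\gamma_t$ between points $a_t\in\overline B(\gamma(a),\varepsilon)\cap|\gamma_t|$ and $b_t\in\overline B(\gamma(b),\varepsilon)\cap|\gamma_t|$, which exist by Hausdorff convergence $|\gamma_t|\to|\gamma|$.

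For the Riemannian case, translation in a merely bi-Lipschitz chart does not give length close to $\ell(\gamma)$; you would need $(1+\delta)$-bi-Lipschitz normal charts and must again confront concatenation across charts. The paper does something much simpler: it reuses the \emph{same} curve $\gamma_t'$ from the general construction, replacing Lemma~\ref{lemma:upper_mass} by the sharp Riemannian tube bound $\mathcal H^2(N_r(|\gamma|))\le 2r\ell(\gamma)+O(r^2)$ and the coarea constant $4/\pi$ by $1$ (Theorem~\ref{theorem:coarea:classical}\ref{coarea:1}), which immediately yields $\ell(\gamma_t')\le\ell(\gamma)+\varepsilon$.
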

\begin{proof}
    Assume that $\gamma$ is simple, otherwise we consider a simple curve with trace in $|\gamma|$ connecting $\gamma(a)$ and $\gamma(b)$. Let $\varepsilon>0$.  Consider the distance function $g(x)=d(x,|\gamma|)$. By the coarea inequality for Lipschitz functions (Theorem \ref{theorem:coarea:classical}) and Lemma \ref{lemma:upper_mass}, there exists $r_1>0$ such that for all $0<r<r_1$ we have
\begin{align}\label{ineq:nearby_curves}
    \int\displaylimits^* \chi_{(0,r)}(t)\mathcal H^1(g^{-1}(t))\, dt \leq\frac{4}{\pi} \mathcal H^2(N_r(|\gamma|)) <\frac{8}{\pi}Kr \ell(\gamma)+\varepsilon r.
\end{align}
Therefore, for all $0<r<r_1$ we have
\begin{align}
    \essinf_{t\in (0,r)} \mathcal H^1(g^{-1}(t))<\frac{8}{\pi}K\ell(\gamma)+\varepsilon. 
\end{align}
By Lemma \ref{lemma:avoid}, for a.e.\ $t\in (0,r_1)$, every Lipschitz and injective curve $\alpha\colon [a,b]\to g^{-1}(t)$ does not lie in $\Gamma_0$.

    Let $U\subset Y$ be a neighborhood of $|\gamma|$ homeomorphic to $\D$. Since $\gamma$ is simple, the space $Z:=U/|\gamma|$ equipped with the quotient metric is homeomorphic to $\D$. The quotient map $\pi\colon U\to Z$ is a local isometry on $U\setminus|\gamma|$. This together with Lemma \ref{lemma:circle_jordan} provides the existence of $r_2>0$ such that for a.e.\ $t\in(0,r_2)$, the level set $g^{-1}(t)$ contains a rectifiable Jordan curve $\gamma_t$ in $U$ separating $|\gamma|$ from $\partial U$. Note that $|\gamma_t|$ converges to $|\gamma|$ in the Hausdorff sense as $t\to 0$. Thus, there exists $r_3\in (0,r_2)$ such that for a.e.\ $t\in(0,r_3)$ we can find distinct points $a_t\in\overline{B}(\gamma(a),\varepsilon)\cap |\gamma_t|$ and $b_t\in\overline{B}(\gamma(b),\varepsilon)\cap |\gamma_t|$. Let $\gamma_t'$ be a Lipschitz and injective parametrization of the closure of the shorter component of $|\gamma_t|\setminus\{a_t,b_t\}$. For $0<r<\min \{r_1,r_2,r_3,\varepsilon\}$ we have
    \begin{align*}
    \essinf_{t\in (0,r)} \ell(\gamma_t') <\frac{4}{\pi}K\ell(\gamma)+\frac{\varepsilon}{2}. 
\end{align*}
    By the previous, $\gamma_t'\notin \Gamma_0$ for a.e.\ $t\in (0,r)$. Moreover, $|\gamma_t'|\subset g^{-1}(t)\subset N_{\varepsilon}(|\gamma|)$. Therefore, there exists $t\in (0,r)$ so that $\gamma_t'$ satisfies \ref{nearby_curves:Gamma0}--\ref{nearby_curves:length}.

If $Y$ is Riemannian we have a local upper area bound of the form 
$$\hm(N_r(|\gamma|))\leq2r\ell(\gamma)+O(r^2);$$
see \cite{Gra04}*{Corollary 9.24}.
By arguing as in \eqref{ineq:nearby_curves} while applying the coarea inequality for Riemannian manifolds (Theorem \ref{theorem:coarea:classical}), we obtain
\begin{align*}
    \essinf_{t\in (0,r)} \mathcal \ell(\gamma_t') \leq \ell(\gamma)+\varepsilon,
\end{align*}
for all sufficiently small $r>0$. Hence, \hyperref[nearbycurves:Riem]{(3')} follows.
\end{proof}

\begin{lemma}\label{lemma:BLD}
    Suppose that $Y$ is locally upper Ahlfors 2-regular with constant $K>0$. Let $g\colon Y\to X$ be continuous map such that there exists $L>0$ with the property that $\ell(g\circ \gamma)\leq L \ell(\gamma)$ for all curves $\gamma$ in $Y$ outside a curve family $\Gamma_0$ with $\mod\Gamma_0=0$. Then
    $$\ell(g\circ \gamma)\leq\frac{4}{\pi}KL\ell(\gamma)$$
    for every rectifiable curve $\gamma$ in $Y$. Moreover, if $Y$ is Riemannian then 
    $$\ell(g\circ \gamma)\leq L\ell(\gamma)$$
    for every rectifiable curve $\gamma$ in $Y$.
\end{lemma}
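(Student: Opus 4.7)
The plan is to reduce the statement to a bound on $\sum_i d(g(\gamma(t_{i-1})), g(\gamma(t_i)))$ for an arbitrary partition $a=t_0<t_1<\cdots<t_n=b$ of $[a,b]$, since by definition $\ell(g\circ\gamma)$ equals the supremum of such sums. The strategy is then to approximate each subarc $\gamma_i=\gamma|_{[t_{i-1},t_i]}$ by a curve avoiding $\Gamma_0$ with controlled length, apply the hypothesis to each approximating curve, and pass to the limit.

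More concretely, fix a partition and let $\varepsilon>0$. Applying Lemma \ref{lemma:nearby_curves} to each $\gamma_i$ produces a curve $\tilde\gamma_{i,\varepsilon}$ in $Y$ with $\tilde\gamma_{i,\varepsilon}\notin\Gamma_0$, endpoints within distance $\varepsilon$ of $\gamma(t_{i-1})$ and $\gamma(t_i)$, and length at most $\tfrac{4}{\pi}K\ell(\gamma_i)+\varepsilon$. The hypothesis on $g$ then gives
$$d(g(\tilde\gamma_{i,\varepsilon}(a_i)),\, g(\tilde\gamma_{i,\varepsilon}(b_i))) \leq \ell(g\circ\tilde\gamma_{i,\varepsilon}) \leq L\ell(\tilde\gamma_{i,\varepsilon}) \leq L\Bigl[\tfrac{4}{\pi}K\ell(\gamma_i)+\varepsilon\Bigr].$$
Combining this with the triangle inequality, one estimates
$$d(g(\gamma(t_{i-1})),g(\gamma(t_i))) \leq L\Bigl[\tfrac{4}{\pi}K\ell(\gamma_i)+\varepsilon\Bigr] + d(g(\gamma(t_{i-1})),g(\tilde\gamma_{i,\varepsilon}(a_i))) + d(g(\tilde\gamma_{i,\varepsilon}(b_i)),g(\gamma(t_i))).$$
Here I would use the uniform continuity of $g$ on a compact neighborhood of $|\gamma|$: for any $\eta>0$, choosing $\varepsilon$ sufficiently small makes the last two terms less than $\eta$. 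Summing over $i=1,\ldots,n$ yields
$$\sum_{i=1}^n d(g(\gamma(t_{i-1})),g(\gamma(t_i))) \leq \tfrac{4}{\pi}KL\,\ell(\gamma) + nL\varepsilon + 2n\eta.$$

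Since the partition (and hence $n$) is fixed, letting $\varepsilon\to 0$ (and consequently $\eta\to 0$) gives $\sum_i d(g(\gamma(t_{i-1})),g(\gamma(t_i))) \leq \tfrac{4}{\pi}KL\,\ell(\gamma)$. Taking supremum over partitions yields the main estimate $\ell(g\circ\gamma)\leq \tfrac{4}{\pi}KL\,\ell(\gamma)$. For the Riemannian case, one replaces item (3) of Lemma \ref{lemma:nearby_curves} by item (3'), so that $\ell(\tilde\gamma_{i,\varepsilon})\leq\ell(\gamma_i)+\varepsilon$, and the same argument produces the sharper bound $\ell(g\circ\gamma)\leq L\,\ell(\gamma)$.

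The main delicacy is the interplay between the partition size and the approximation parameter: the errors contribute an additive $n(L\varepsilon+2\eta)$ for a partition of size $n$, so one must freeze $n$ first and only then send $\varepsilon,\eta$ to zero. The compactness of $|\gamma|$, which makes $g$ uniformly continuous on any fixed neighborhood of $|\gamma|$, is precisely what allows $\eta$ to be chosen as a function of $\varepsilon$ tending to zero, so that the error terms vanish in the limit for each fixed partition.
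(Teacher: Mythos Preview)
Your proof is correct and takes a somewhat different, more elementary route than the paper. The paper first treats the case of a rectifiable Jordan arc: it applies Lemma~\ref{lemma:nearby_curves} to obtain approximants $\gamma_n\notin\Gamma_0$, reparametrizes them with constant speed, extracts a uniformly convergent subsequence with limit $\widetilde\gamma$ surjecting onto $|\gamma|$, and then combines the multiplicity inequality $N(g\circ\widetilde\gamma,y)\geq N(g\circ\gamma,y)$ with the area formula for length and lower semicontinuity of length to conclude $\ell(g\circ\gamma)\leq\liminf_n\ell(g\circ\gamma_n)$. Only afterwards does it pass to general curves via partitions and Jordan sub-arcs. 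You bypass the Jordan-arc step entirely: by going straight to the partition definition of $\ell(g\circ\gamma)$, you only need to control the \emph{endpoint distances} $d(g(\gamma(t_{i-1})),g(\gamma(t_i)))$, and these are handled by the approximating curve together with the uniform continuity of $g$ on a compact neighborhood of $|\gamma|$. This avoids both the area formula for length and the lower semicontinuity argument, at the modest cost of invoking uniform continuity; your careful ordering of limits (freeze the partition, then send $\varepsilon,\eta\to 0$) is exactly what makes this work.
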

\begin{proof}
    Let $\gamma$ be a rectifiable Jordan arc in $Y$. By Lemma \ref{lemma:nearby_curves}, for each $n\in \N$ we can find a curve $\gamma_{n}\subset N_{1/n}(|\gamma|)$ whose endpoints are $(1/n)$-close to the endpoints of $\gamma$, $\gamma_n\notin \Gamma_0$, and
    $$\ell(\gamma_{n}) \leq {4}{\pi^{-1}}K\ell(\gamma)+n^{-1}.$$ 
    Suppose that $\gamma_n$ is parametrized by $[0,1]$ with constant speed. After passing to a subsequence, we may assume that $\gamma_n$ converges uniformly to a path $\widetilde \gamma\colon [0,1]\to |\gamma|$ with the same endpoints as $\gamma$. It follows that $\widetilde \gamma$ is surjective, but it is possibly not injective. Moreover, $g\circ \gamma_n$ converges uniformly to $g\circ \widetilde \gamma$.  Since $\gamma$ is a Jordan arc, we have $N(g\circ \widetilde \gamma ,y)\geq N(g\circ \gamma,y)$ for each $y\in g(|\gamma|)$. The area formula for length \cite{Fed69}*{Theorem 2.10.13} and the lower semi-continuity of length imply that
    \begin{align*}
        \ell(g\circ \gamma)\leq \ell(g\circ \widetilde \gamma)\leq \liminf_{n\to\infty}\ell(g\circ \gamma_n).
    \end{align*}
    Since $\gamma_n\notin \Gamma_0$, the latter is bounded by 
    \begin{align*}
        L\liminf_{n\to\infty} \ell(\gamma_n) \leq {4}{\pi}^{-1}KL\ell(\gamma).
    \end{align*}
    This completes the proof in the case of Jordan arcs. 
     
    Now, suppose that $\gamma\colon [a,b]\to Y$ is an arbitrary path. Let $\{t_0,\dots,t_n\}$ be a partition of $[a,b]$. For $i\in \{1,\dots,n\}$, let $\gamma_i\colon [t_{i-1},t_i] \to  \gamma([t_{i-1},t_i])$ be a Jordan arc with endpoints $\gamma(t_{i-1}),\gamma(t_i)$. Then 
    \begin{align*}
        \sum_{i=1}^n d(g(\gamma(t_{i-1})),g(\gamma(t_i)))&\leq \sum_{i=1}^n \ell(g\circ \gamma_i) \leq {4}{\pi}^{-1}KL \sum_{i=1}^n  \ell(\gamma_i)\\
        &\leq {4}{\pi}^{-1}KL\sum_{i=1}^n \ell(\gamma|_{[t_{i-1},t_i]}) = {4}{\pi}^{-1}KL\ell(\gamma).
    \end{align*}
    This yields $\ell(g\circ\gamma)\leq 4\pi^{-1}KL\ell(\gamma)$.

    If $Y$ is Riemannian, the statement follows after applying \hyperref[nearbycurves:Riem]{(3')} from Lemma \ref{lemma:nearby_curves} instead of \ref{nearby_curves:length}.
\end{proof}

\begin{proof}[Proof of Theorem \ref{thm:main} \ref{main:bld}]
The upper Ahlfors $2$-regularity implies that $Y$ is reciprocal \cite{Raj:17}*{Theorem 1.6}. By Theorem \ref{thm:main} \ref{main:injective}, we have that $f$ is a quasiconformal homeomorphism and the length of a.e.\ path is quasi-preserved. We now apply Lemma \ref{lemma:BLD} to $g=f^{-1}$, together with the fact that $f$ is Lipschitz, and conclude that the length of every rectifiable path is quasi-preserved. It also follows that $\ell(\gamma)<\infty$ if and only if $\ell(f\circ \gamma)<\infty$. Therefore, $f$ is a map of bounded length distortion. 
\end{proof}

\begin{proof}[Proof of Theorem \ref{thm:main} \ref{main:isometry}]
    Since $Y$ is reciprocal, by Theorem \ref{thm:main} \ref{main:injective}, $f$ is a $1$-quasiconformal homeomorphism and preserves the length of all curves in $X$ outside a curve family $\Gamma_0$ with $\mod\Gamma_0=0$.  It follows from Lemma \ref{lemma:BLD} that $\ell(f^{-1}\circ \gamma)\leq\ell(\gamma)$  for every rectifiable curve $\gamma$ in $Y$. If $x,y\in X$ and $\gamma$ is a rectifiable curve in $Y$ joining $f(x)$ and $f(y)$ then
   $$d(x,y)\leq\ell(f^{-1}\circ\gamma)\leq\ell(\gamma).$$
   Infimizing over $\gamma$ gives $d(x,y)\leq d(f(x),f(y))$. Equality follows from $f$ being 1-Lipschitz.
\end{proof}

\section{Examples}\label{section:examples}
We present examples that show the optimality of Theorem \ref{thm:main}. In all examples $X,Y$ are metric surfaces of locally finite Hausdorff $2$-measure and $f\colon X\to Y$ is an area-preserving and $1$-Lipschitz map. 

\begin{example}\label{example:collapse}
    This example shows that $f$ is not a homeomorphism in general, even if $X$ is Euclidean. Let $I$ be the interval $[0,1]\times \{0\}$ and $Y=\R^2/I$, equipped with the quotient metric. The natural projection map $f\colon\R^2\to Y$ is area-preserving and $1$-Lipschitz, but it is not a homeomorphism. 
\end{example}

\begin{example}\label{example:weight}
    This example shows that if $Y$ is reciprocal as in Theorem \ref{thm:main} \ref{main:injective}, then $f$ is not BLD in general, even if $X$ is Euclidean. Define the weight $\omega\colon\R^2\to[0,1]$ by $\omega(x)= x_1$ if $x=(x_1,0)\in I\coloneqq(0,1]\times \{0\}$ and $\omega(x)=1$ otherwise. We define a metric $d$ on $\R^2$ by
    $$d(x,y):=\inf_{\gamma} \int_{\gamma}\omega\, ds,$$
    where the infimum is taken over all rectifiable curves $\gamma$ connecting $x,y\in\R^2$. Let $f\colon\R^2\to Y:=(\R^2,d)$ be the identity map, which is 1-Lipschitz, since $\omega\leq 1$, and a local isometry on $\R^2\setminus I$, hence area-preserving. Moreover, $f$ is a homeomorphism, and thus $Y$ is a metric space homeomorphic to $\R^2$. 
    
    One can show that for each Borel set $E\subset \R^2$ we have $\mathcal H^1_d(E)= \int_E \omega \, d\mathcal H^1$; in fact, it suffices to show this for sets $E\subset I$.  This fact and the area formula for length \cite{Fed69}*{Theorem 2.10.13} imply that if $\gamma$ is a rectifiable curve with respect to the Euclidean metric, then $\ell_{d}(\gamma)=\int_{\gamma}\omega\, ds$. This implies that 
    $$ \int_{\gamma} \rho \, ds_d = \int_{\gamma} \rho \omega\, ds$$
    for every Borel function $\rho\colon \R^2\to [0,\infty]$. 
    
    Let $\Gamma$ be a family of curves in $\R^2$. Since $f$ is $1$-Lipschitz and area-preserving, by Lemma \ref{lemma:weak_qc} we have $\mod \Gamma\leq\mod f(\Gamma)$; here the latter modulus is with respect to the metric $d$. We now show the reverse inequality. Let $\rho\colon \R^2\to [0,\infty]$ be admissible for $\Gamma$. We set $\rho'=\rho \omega^{-1}$. If $\gamma\in \Gamma$, then 
    $$\int_{f\circ \gamma} \rho' \, ds_{d} = \int_{\gamma} \rho \omega^{-1}\omega \, ds =\int_{\gamma}\rho\, ds \geq 1.$$
    Thus, $\rho'$ is admissible for $f(\Gamma)$. Since $\mathcal H^2_d(I)=0$, we conclude that $$\mod f(\Gamma)\leq \int \rho^2\, d\mathcal H^2$$
    and thus $\mod f(\Gamma)\leq \mod \Gamma$.  This shows that $f$ is $1$-quasiconformal and that $Y$ is reciprocal. 
    
    By Theorem \ref{thm:main} \ref{main:length}, $f$ preserves the length of a.e.\ curve with respect to $2$-modulus; this can also be seen immediately here, since a.e.\ curve intersects $I$ at a set of length zero.  However, $f$ does not preserve the length of \textit{every} curve and is not BLD. Indeed, for $t\in(0,1]$ denote by $\gamma_t$ the straight line segment connecting $(0,0)$ and $(t,0)$. Then $\ell(\gamma_t)=t$, whereas
    $$\ell_{d}(\gamma_t)=\int_{\gamma_t}\omega\, ds=t^2/2.$$
\end{example}

\section{Coarea inequality}
In this section we establish the general coarea inequality of Theorem \ref{theorem:coarea:introduction}. First we prove the statement in the case that $X$ is a topological closed disk. The proof follows the same strategy as in \cite{EIR22}*{Theorem 4.8}.

\begin{thm}\label{theorem:coarea}
    Let $X$ be a metric surface of finite Hausdorff $2$-measure that is homeomorphic to a topological closed disk and suppose that there exists a weakly $K$-quasiconformal map from $\bar \D$ onto $X$ for some $K\geq 1$. Let $u\colon X\to \R$ be a continuous function with a $2$-weak upper gradient $\rho_u\in L^2(X)$. 
    \begin{enumerate}[label=\normalfont(\arabic*)]
        \item If $\mathcal A_u$ denotes the union of all non-degenerate components of the level sets $u^{-1}(t)$, $t\in \R$, of $u$, then $\mathcal A_u$ is a Borel set. \label{ca:i_}
        \item For every Borel function $g\colon X\to [0,\infty]$ we have
        $$\int\displaylimits^* \int_{u^{-1}(t)\cap \mathcal A_u}g\, d\mathcal H^1\, dt \leq K \int g\rho_u\, d\mathcal H^2.$$\label{ca:ii_}
    \end{enumerate}
\end{thm}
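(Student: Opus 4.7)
For part \ref{ca:i_}, my plan is purely topological. For each $\delta>0$, let $\mathcal{A}_u^\delta$ be the set of $x \in X$ whose component in $u^{-1}(u(x))$ has diameter at least $\delta$. Given $x_n \to x$ with $x_n \in \mathcal{A}_u^\delta$, Blaschke selection produces a Hausdorff limit $K$ of (a subsequence of) the corresponding components; continuity of $u$ forces $u \equiv u(x)$ on $K$ and $\diam K \geq \delta$, so $x \in \mathcal{A}_u^\delta$. Hence each $\mathcal{A}_u^\delta$ is closed and $\mathcal{A}_u = \bigcup_{n\in\N} \mathcal{A}_u^{1/n}$ is an $F_\sigma$ set.

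For part \ref{ca:ii_}, the strategy is to pull back via the weakly $K$-quasiconformal parametrization $\phi \colon \bar\D \to X$ supplied by Theorem \ref{theorem:wqc}, apply the classical Sobolev coarea formula on the disk, and push back to $X$. Set $v = u \circ \phi$ and let $g_\phi$ denote the minimal $2$-weak upper gradient of $\phi$. The modulus inequality $\mod \Gamma \leq K \mod \phi(\Gamma)$ implies that the exceptional family for $\rho_u$ pulls back to a family of modulus zero in $\bar\D$, so the composed upper gradient inequality together with Lemma \ref{lemma:apmd_ug} yields that $(\rho_u\circ\phi)\, g_\phi$ is a $2$-weak upper gradient of $v$. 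In particular $v \in W^{1,1}(\bar\D)$ with $|\nabla v| \leq (\rho_u \circ \phi)\, g_\phi$ almost everywhere. Applying Theorem \ref{theorem:coarea:classical}\ref{coarea:2} to $v$ on $\bar\D$ with test function $h = (g\circ\phi)\, g_\phi$ gives
$$\int \int_{v^{-1}(t)} (g\circ\phi)\, g_\phi \, d\mathcal{H}^1\, dt \leq \int (g\circ\phi)(\rho_u\circ\phi)\, g_\phi^2\, d\mathcal{H}^2.$$
Now Lemma \ref{lemma:wqc_derivative} yields $g_\phi^2 \leq K J(\ap\md \phi)$ a.e., and the area formula (Theorem \ref{thm:area-formula}) together with $N(\phi,y)=1$ for $\mathcal{H}^2$-a.e.\ $y \in X$ (Theorem \ref{theorem:radon-nikodym}) rewrites the right-hand side as exactly $K \int g\, \rho_u\, d\mathcal{H}^2$.

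The crux is then to prove, for a.e.\ $t$, the change-of-variables inequality
$$\int_{u^{-1}(t)\cap \mathcal{A}_u} g\, d\mathcal{H}^1 \leq \int_{v^{-1}(t)} (g\circ\phi)\, g_\phi\, d\mathcal{H}^1.$$
Since $\phi$ is cell-like, hence monotone, the $\phi$-preimage of each connected set is connected; so for every non-degenerate component $E$ of $u^{-1}(t)$, the set $F_E = \phi^{-1}(E) \subset v^{-1}(t)$ is a continuum, and the $F_E$ are pairwise disjoint as $E$ ranges over components. For a.e.\ $t$, the classical Sobolev coarea gives $\mathcal{H}^1(v^{-1}(t)) < \infty$, and Corollary \ref{corollary:parametrizations} then shows that, with countably many exceptions, each $F_E$ is a Jordan arc or Jordan curve admitting a Lipschitz arclength parametrization $\tilde\gamma$ with $\phi \circ \tilde\gamma$ surjecting onto $E$. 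A Sobolev analogue of Lemma \ref{lemma:avoid} (obtained by replacing $\lip v$ by $|\nabla v|$ and invoking Theorem \ref{theorem:coarea:classical}\ref{coarea:2} in place of \ref{coarea:1}) lets me ensure $\tilde\gamma$ avoids the exceptional modulus-zero families attached to $g_\phi$ and to Lemma \ref{lemma:length-apmd}. Federer's one-dimensional area formula for the rectifiable path $\phi \circ \tilde\gamma$, combined with the pointwise bound $|(\phi\circ\tilde\gamma)'|(s) \leq \ap\md\phi_{\tilde\gamma(s)}(\dot{\tilde\gamma}(s)) \leq g_\phi(\tilde\gamma(s))$ and $N(\phi\circ\tilde\gamma,y) \geq 1$ on $E$, then yields $\int_E g\, d\mathcal{H}^1 \leq \int_{F_E} (g\circ\phi)\,g_\phi\, d\mathcal{H}^1$. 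Summing over the pairwise disjoint $F_E$ in $v^{-1}(t)$ gives the desired inequality, and combining with the previous display completes the proof.

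The main obstacle will be the per-component estimate above, specifically the treatment of the countably many exceptional components from Corollary \ref{corollary:parametrizations} that fail to be Jordan arcs or curves. Such components are Peano continua of finite $\mathcal{H}^1$-measure, so they decompose into countable unions of Jordan arcs along which the same Federer argument applies; after this decomposition the estimate is preserved and the proof carries through.
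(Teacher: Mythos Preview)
Your overall strategy---pull back by the weakly quasiconformal parametrization, invoke the Euclidean Sobolev coarea formula on the disk, establish a per-component change-of-variables inequality along level sets, and push forward---is exactly the paper's approach, and your argument for part~\ref{ca:i_} is identical to theirs.

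There is one genuine soft spot. Your proposed treatment of the ``exceptional'' components $F_E$ that fail to be Jordan arcs or Jordan curves is not justified as stated: a Peano continuum of finite $\mathcal H^1$-measure does not obviously decompose into a \emph{disjoint} countable union of Jordan arcs, and without disjointness the sum $\sum_n \int_{J_n}(g\circ\phi)\,g_\phi\,d\mathcal H^1$ need not be bounded by $\int_{F_E}(g\circ\phi)\,g_\phi\,d\mathcal H^1$; moreover the avoidance-of-$\Gamma_0$ argument (Lemma~\ref{lemma:avoid} and its Sobolev variant) is stated only for curves injective on $[a,b)$, so a non-injective Eilenberg--Harrold parametrization does not automatically avoid $\Gamma_0$. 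The paper sidesteps this entirely with a cleaner observation you are missing: apply Corollary~\ref{corollary:parametrizations} not level-by-level but to the single collection of \emph{all} non-degenerate components of $v^{-1}(t)$ as $t$ ranges over the full-measure set where $\mathcal H^1(v^{-1}(t))<\infty$. These continua are pairwise disjoint (across levels and within each level), so the corollary yields only countably many exceptional continua in total; hence for all but countably many~$t$---in particular for a.e.\ $t$---\emph{every} non-degenerate component of $v^{-1}(t)$ is already a Jordan arc or Jordan curve, and no decomposition is needed.

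Two smaller points. First, to apply Theorem~\ref{theorem:coarea:classical}\ref{coarea:2} you need an open domain; the paper handles this by extending $v$ by reflection to $\widetilde v\in W^{1,2}(U)$ on a neighborhood $U\supset\bar\D$, which also ensures $\mathcal H^1(v^{-1}(t))<\infty$ on the full closed disk for a.e.\ $t$. Second, your route to the right-hand side via Lemma~\ref{lemma:wqc_derivative}, the area formula, and approximate metric derivatives is correct but heavier than necessary: the paper uses Theorem~\ref{theorem:definitions_qc} directly to get $\int_{\phi^{-1}(E)}g_\phi^2\,d\mathcal H^2\le K\mathcal H^2(E)$, hence $\int(g\circ\phi)(\rho_u\circ\phi)\,g_\phi^2\,d\mathcal H^2\le K\int g\rho_u\,d\mathcal H^2$, and for the per-component estimate uses the standard upper-gradient path inequality $\int_{\phi\circ\gamma}g\,ds\le\int_\gamma(g\circ\phi)\,g_\phi\,ds$ rather than Federer's area formula along the path.
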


\begin{proof}
    First we show that $\mathcal A_u$ is a Borel set. We can write
    $$\mathcal A_u= \bigcup_{k=1}^\infty A_k,$$
    where $A_k$ is the union of the components $E$ of $u^{-1}(t)$, $t\in \R$, with $\diam(E)\geq 1/k$. We will show that $A_k$ is closed for each $k\in \N$. Let $\{x_n\}_{n\in \N}$ be a sequence in $A_k$. If $x_n\in E_n\subset u^{-1}(t_n)$, $n\in \N$, then after passing to a subsequence, the continua $E_n$ converge in the Hausdorff sense to a continuum $E$ with $\diam(E)\geq 1/k$. Moreover, after passing to a further subsequence, $t_n$ converges to some $t\in \R$, so $E\subset u^{-1}(t)$. This shows that $E\subset A_k$. Therefore, all limit points of $\{x_n\}_{n\in \N}$ lie in $A_k$, as desired. 

    Let $f\colon \bar \D\to X$ be a weakly $K$-quasiconformal map. By Theorem \ref{theorem:definitions_qc} there exists a $2$-weak upper gradient $\rho_f\in L^2(\bar \D)$ such that
    $$\int_{f^{-1}(E)} \rho_f^2\, d\mathcal H^2 \leq K\mathcal H^2(E)$$
    for each Borel set $E\subset X$. This implies that
    \begin{align}\label{coarea:qc}
        \int (g\circ f) \cdot \rho_f^2\, d\mathcal H^2\leq K \int g \, d\mathcal H^2
    \end{align}
    for each Borel function $g\colon X\to [0,\infty]$. Moreover, for all curves $\gamma$ in $\bar \D$ outside a curve family $\Gamma_0$ with $\mod \Gamma_0=0$ we have (see \cite{HKST:15}*{Prop.\ 6.3.3})
    \begin{align}\label{coarea:upper_gradient}
            \int_{f\circ \gamma}g\, ds\leq \int_{\gamma}(g\circ f)\cdot \rho_f\, ds.
    \end{align}

    Consider the function $v=u\circ f$ on $\bar \D$. Then by \cite{EIR22}*{Lemma 4.5}, $v$ has a $2$-weak upper gradient $\rho_v$ such that for a.e.\ $x\in \bar \D$ we have 
    $$\rho_v(x)\leq (\rho_u\circ f)(x) \cdot \rho_f(x).$$
    In conjunction with \eqref{coarea:qc}, this implies that $\rho_v\in L^2(\bar \D)$, so $v\in W^{1,2}(\D)$, and 
    \begin{align}\label{coarea:gradient}
        |\nabla v(x)|\leq (\rho_u\circ f)(x) \cdot \rho_f(x)
    \end{align}
    for a.e.\ $x\in \D$, because $|\nabla v|$ is the minimal $2$-weak upper gradient of $v$ (see \cite{HKST:15}*{Theorem 7.4.5}). We can extend $v$ by reflection to a continuous function $\widetilde v\in W^{1,2}(U)$ for some neighborhood $U$ of $\bar \D$. By the classical coarea formula (Theorem \ref{theorem:coarea:classical}), the set $v^{-1}(t)=\widetilde{v}^{-1}(t)\cap \bar\D$ has finite Hausdorff $1$-measure for a.e.\ $t\in \R$. Corollary \ref{corollary:parametrizations} implies that for a.e.\ $t\in \R$ each component $E$ of $v^{-1}(t)$ is a Jordan arc or a Jordan curve and can be parametrized by a Lipschitz function $\gamma\colon [a,b]\to E$ that is injective on $[a,b)$.  Moreover, using the classical coarea formula for $\widetilde v$ one can show that for a.e.\ $t\in \R$, each Lipschitz curve $\gamma\colon [a,b]\to v^{-1}(t)$ that is injective on $[a,b)$ lies outside the given curve family $\Gamma_0$ of $2$-modulus zero (cf.\ Lemma \ref{lemma:avoid}); hence $\gamma$ satisfies \eqref{coarea:upper_gradient}. Therefore, the following statements are true for a.e.\ $t\in \R$. 
    \begin{enumerate}
        \item $\mathcal H^1(v^{-1}(t))<\infty$. (Consequence of classical coarea formula.)
        \item Each non-degenerate component $E$ of $v^{-1}(t)$ is a Jordan arc or a Jordan curve and there exists a Lipschitz parametrization $\gamma\colon[a,b]\to E$ that is injective in $[a,b)$.  (Consequence of Corollary \ref{corollary:parametrizations}.)
        \item For each Lipschitz curve $\gamma\colon[a,b]\to v^{-1}(t)$ that is injective on $[a,b)$ and for each Borel function $g\colon X\to [0,\infty]$, we have
        $$\int_{f\circ \gamma} g\, ds \leq \int_{\gamma}(g\circ f)\cdot \rho_f\, ds.$$
        (Consequence of classical coarea formula and \eqref{coarea:upper_gradient}.)
    \end{enumerate}
    We fix a Borel function $g\colon X\to [0,\infty]$, a value $t\in \R$ satisfying the above statements, a non-degenerate component $E$ of $v^{-1}(t)$, and a Lipschitz parametrization $\gamma\colon [a,b]\to E$ that is injective in $[a,b)$. We have
    $$\int_{f(E)}g\, d\mathcal H^1 = \int_{f(|\gamma|)} g\, d\mathcal H^1\leq \int_{f\circ \gamma}g\, ds\leq \int_{\gamma} (g\circ f)\cdot \rho_f\, ds =\int_{E}(g\circ f)\cdot \rho_f\, d\mathcal H^1.$$
    Note that if $G$ is a non-degenerate component of $u^{-1}(t)$, then by the monotonicity of $f$, $f^{-1}(G)$ is a non-degenerate component of $v^{-1}(t)$. Hence, 
    $$\int_{G}g\, d\mathcal H^1 \leq \int_{f^{-1}(G)}(g\circ f)\cdot \rho_f\, d\mathcal H^1.$$
    The finiteness of the Hausdorff $1$-measure of $v^{-1}(t)$ implies that it can have at most countably many non-degenerate components. Summing over all the non-degenerate components gives
    $$\int_{u^{-1}(t)\cap \mathcal A_u}g\, d\mathcal H^1\leq  \int_{v^{-1}(t)}(g\circ f)\cdot \rho_f\, d\mathcal H^1.$$
    We now integrate over $t\in \R$, use the classical coarea formula for $\widetilde v$, and inequalities \eqref{coarea:gradient} and \eqref{coarea:qc}, to obtain
    \begin{align*}
        \int\displaylimits^* \int_{u^{-1}(t)\cap \mathcal A_u} g\, d\mathcal H^1dt &\leq \int \int_{v^{-1}(t)} (g\circ f)\cdot \rho_f\, d\mathcal H^1dt \\&= \int_{\bar  \D} (g\circ f )\cdot\rho_f\cdot |\nabla \widetilde v|\, d\mathcal H^2\\
        &= \int_{\D} (g\circ f )\cdot\rho_f\cdot |\nabla  v|\, d\mathcal H^2\\
        &\leq \int (g\circ f) \cdot (\rho_u\circ f) \cdot \rho_f^2 \, d\mathcal H^2\\&\leq K \int g \rho_u\, d\mathcal H^2. 
    \end{align*}
    This completes the proof. 
\end{proof}

\begin{proof}[Proof of Theorem \ref{theorem:coarea:introduction}]
We write $X$ as the countable union of topological closed disks $X_n$ with $\mathcal H^2(X_n)<\infty$, $n\in \N$. We also consider topological closed disks $Z_n\supset X_n$, so that the topological interior  $\inter_{\top}(Z_n)$ contains  $X_n$. We have $\inter(Z_n)\subset \inter_{\top}(Z_n)\subset Z_n$, where $\inter(Z_n)$ refers to the manifold interior. Therefore the topological closure of $\inter_{\top}(Z_n)$ is precisely the closed disk $Z_n$. Let $u_n=u|_{Z_n}$. We claim that 
\begin{align}\label{coarea:union}
    \mathcal A_u=\bigcup_{n=1}^\infty \mathcal A_{u_n}.
\end{align}
For this, it suffices to show that
\begin{align}\label{coarea:inclusion}
    \mathcal A_u\cap X_n \subset \mathcal A_{u_n}
\end{align}
for each $n\in \N$. Let $x\in \mathcal A_u\cap X_n$ and consider a non-degenerate component $E$ of $u^{-1}(t)$ for some $t\in \R$ such that $x\in E$.  Note that $x$ lies in  $\inter_{\top}(Z_n)$. If $E\subset Z_n$, then $E\subset \mathcal A_{u_n}$ and $x\in \mathcal A_{u_n}$. Suppose that $E$ is not contained in $Z_n$; in this case $E\cap \partial_{\top} Z_n\neq \emptyset$ by the connectedness of $E$. Since $E$ is a generalized continuum (i.e., a locally compact connected set), by \cite{Whyburn}*{(10.1), p.~16}, we conclude that each component of $E\cap {Z_n}$ intersects $\partial_{\top} Z_n$. In particular, the component $E_x$ of $E\cap Z_n$ that contains $x$ must intersect $\partial_{\top} Z_n$, and thus $E_x$ is non-degenerate. We conclude that $E_x\subset \mathcal A_{u_n}$, so $x\in \mathcal A_{u_n}$. The claim is proved. Now, each $\mathcal A_{u_n}$ is a Borel set by Theorem \ref{theorem:coarea}, so $\mathcal A_u$ is Borel measurable by \eqref{coarea:union} and we have established \ref{ca:i}.

Let $g\colon X\to [0,\infty]$ be a Borel function. For $n\in \N$, let $g_n=g\cdot \chi_{X_n\setminus \bigcup_{i=1}^{n-1}X_i}$. Let $x\in \mathcal A_u \cap (X_n\setminus \bigcup_{i=1}^{n-1}X_i)$. Then $x\in \mathcal A_{u_n}$ by \eqref{coarea:inclusion}, so
$$g(x)\chi_{\mathcal A_u}(x)= g_n(x)\chi_{\mathcal A_u}(x) =g_n(x)\chi_{\mathcal A_{u_n}}(x).$$
We conclude that 
\begin{align*}
    g\chi_{\mathcal A_u} = \sum_{n\in \N} g_n\chi_{\mathcal A_{u_n}}. 
\end{align*}
By Theorem \ref{theorem:coarea}, applied to $u_n\colon Z_n\to \R$, and the existence of weakly $(4/\pi)$-quasi\-conformal parametrizations (Theorem \ref{theorem:wqc}), we have
$$\int\displaylimits^* \int_{u^{-1}(t)} g_n \chi_{\mathcal A_{u_n}}\, d\mathcal H^1dt \leq \frac{4}{\pi} \int g_n\rho_u\, d\mathcal H^2 $$
for each $n\in \N$. Thus, upon summing we obtain the claimed inequality \ref{ca:ii}.  

Finally,  part \ref{ca:iii} follows from part \ref{ca:ii} and the coarea inequality for Lipschitz functions. Namely, one applies \ref{ca:ii} to the Borel  function $g\chi_{\mathcal A_u}$ and Theorem \ref{theorem:coarea:classical} to $g\chi_{X\setminus \mathcal A_u}$.
\end{proof}

\bibliography{bibli.bib}

\end{document}